\newcommand{\ds}{\displaystyle}
\newcommand{\bu}{{\bf u}}
\newcommand{\x}{{\bf x}}
\newcommand{\Div}{{\rm div}}
\newtheorem{Theorem}{Theorem}[section]
\newtheorem{Lemma}{Lemma}[section]
\newtheorem{Proposition}{Proposition}[section]
\newtheorem{remark}{Remark}[section]
\newtheorem*{Assumption*}{Assumption}
\newtheorem{Definition}{Definition}[section]
\newtheorem{problem}{Problem}[section]
\newtheorem*{problem*}{Problem}
\numberwithin{equation}{section}
\begin{document}

\title{ Recovery of initial displacement and velocity in anisotropic elastic systems  by the time dimensional reduction method}

\author{
\and
Trong D. Dang\thanks{Faculty of Mathematics and Computer Science, University of Science, Vietnam National University, Ho Chi Minh City, Vietnam, \texttt{ddtrong@hcmus.edu.vn}} 
\and
Chanh V. Le\thanks{Faculty of Mathematics and Computer Science, University of Science, Vietnam National University, Ho Chi Minh City, Vietnam, \texttt{lvchanh@hcmus.edu.vn}}
\and
Khoa D. Luu\thanks{Faculty of Mathematics and Computer Science, University of Science, Vietnam National University, Ho Chi Minh City, Vietnam, \texttt{khoaluu1207@gmail.com}}
\and
Loc H. Nguyen\thanks{Department of Mathematics and Statistics, University of North Carolina at
Charlotte, Charlotte, NC, 28223, USA, \texttt{loc.nguyen@charlotte.edu}, corresponding author} 
}


\date{}
\maketitle

\begin{abstract}
We introduce a time-dimensional reduction method for the inverse source problem in linear elasticity, where the goal is to reconstruct the initial displacement and velocity fields from partial boundary measurements of elastic wave propagation. The key idea is to employ a novel spectral representation in time, using an orthonormal basis composed of Legendre polynomials weighted by exponential functions. This Legendre polynomial-exponential basis enables a stable and accurate decomposition in the time variable, effectively reducing the original space-time inverse problem to a sequence of coupled spatial elasticity systems that no longer depend on time.
These resulting systems are solved using the quasi-reversibility method. On the theoretical side, we establish a convergence theorem ensuring the stability and consistency of the regularized solution obtained by the quasi-reversibility method as the noise level tends to zero. On the computational side, two-dimensional numerical experiments confirm the theory and demonstrate the method's ability to accurately reconstruct both the geometry and amplitude of the initial data, even under substantial measurement noise. The results highlight the effectiveness of the proposed framework as a robust and computationally efficient strategy for inverse elastic source problems.
\end{abstract}

\noindent{\it Key words}: 
inverse initial data problem,
elastic wave equation,
anisotropic, 
Legendre polynomial-exponential basis,
time-dimensional reduction,
convergence analysis,
numerical reconstruction

\noindent{\it AMS subject classification: 
35R30,  
74J05,  
65M32,  
35B30   
}

\section{Introduction}

Let $\mathbb{C} : \mathbb{R}^d \to \mathbb{R}^{d \times d \times d \times d}$, with $d \geq 2$, $d \in \mathbb{N}$, denote the \emph{elasticity tensor} characterizing an anisotropic and inhomogeneous elastic medium. 
Let $\mathbf{u} = \begin{bmatrix}u_1 & \dots & u_d\end{bmatrix}^\top: \mathbb{R}^d\times [0,\infty) \to \mathbb{R}^d$ denote the displacement field of the medium. It is governed by the following initial value problem for the elastic wave equation:
\begin{equation}
    \begin{cases}
        \mathbf{u}_{tt}(\mathbf{x}, t) = \operatorname{div} \left( \mathbb{C} : \nabla \mathbf{u}(\mathbf{x}, t) \right), & \mathbf{x} \in \mathbb{R}^d,\ t \in (0, \infty), \\
        \mathbf{u}(\mathbf{x}, 0) = \mathbf{p}(\mathbf{x}), & \mathbf{x} \in \mathbb{R}^d, \\
        \mathbf{u}_t(\mathbf{x}, 0) = \mathbf{q}(\mathbf{x}), & \mathbf{x} \in \mathbb{R}^d,
    \end{cases}
    \label{main_ivp}
\end{equation}
where $\mathbf{p}$ and $\mathbf{q}$ denote the initial displacement and velocity fields, respectively.

The primary goal of this paper is to investigate the following inverse problem, which concerns the recovery of the initial conditions from boundary observations.

\begin{problem}[Recovery of initial data in the elastic wave equation]
Let $T > 0$ be a fixed final time, and let $\Omega \subset \mathbb{R}^d$ be a bounded open domain with smooth boundary $\partial \Omega$. Assume that $\mathbf{u}(\mathbf{x}, t)$ is the solution to the initial value problem \eqref{main_ivp}, and that the following boundary data are available:
\begin{equation}
    \mathbf{f}(\mathbf{x}, t) = \mathbf{u}(\mathbf{x}, t), \quad
    \mathbf{g}(\mathbf{x}, t) = \partial_\nu \mathbf{u}(\mathbf{x}, t), \qquad \text{for } (\mathbf{x}, t) \in \partial \Omega \times [0, T],
    \label{data_ip}
\end{equation}
where $\nu(\mathbf{x})$ denotes the outward unit normal vector to $\partial \Omega$ at the point $\mathbf{x}$, and $\partial_\nu \mathbf{u}$ is the normal derivative of $\mathbf{u}$.
Reconstruct the unknown initial data:
\[
\mathbf{p}(\mathbf{x}) = \mathbf{u}(\mathbf{x}, 0), \quad
\mathbf{q}(\mathbf{x}) = \mathbf{u}_t(\mathbf{x}, 0), \qquad \text{for all } \mathbf{x} \in \Omega.
\]
\label{isp}
\end{problem}

To guarantee the well-posedness of the forward problem and  the uniqueness of the time-reduced model developed in this work, we impose the following assumptions on the elasticity tensor \(\mathbb{C} = \{C_{ijkl}\}_{i,j,k,l=1}^d\), defined at every point \(\mathbf{x} \in \mathbb{R}^d\):

\begin{enumerate}
\item Symmetry: Each component \(C_{ijkl} \in C^2(\overline{\Omega})\), and the tensor satisfies the classical symmetry conditions:
\begin{equation*}
C_{ijkl} = C_{jikl}, \quad C_{ijkl} = C_{ijlk}, \quad C_{ijkl} = C_{klij}, \quad \forall\, 1 \leq i,j,k,l \leq d.
\end{equation*}

\item Coercivity: For every symmetric matrix \(\bm{\xi} = [\xi_{ij}] \in \mathbb{R}^{d \times d}_{\mathrm{sym}}\) and almost every \(\mathbf{x} \in \Omega\), the tensor satisfies the uniform ellipticity condition:
\[
\sum_{i,j,k,l=1}^d C_{ijkl}(\mathbf{x}) \, \xi_{ij} \, \xi_{kl} \geq \Lambda |\bm{\xi}|^2,
\]
for some constant \(\Lambda > 0\).
\end{enumerate}

The inverse problem formulated in Problem~\ref{isp} is, in general, ill-posed, and uniqueness of the solution cannot be guaranteed without further assumptions. Specifically, recovering both the initial displacement $\mathbf{p}$ and the initial velocity $\mathbf{q}$ from boundary measurements may admit multiple solutions. If either $\mathbf{p}$ or $\mathbf{q}$ is prescribed, then the other might be uniquely determined. 
In this work, we aim to reconstruct both $\mathbf{p}$ and $\mathbf{q}$, and thus cannot assume prior knowledge of either. To resolve the issue of non-uniqueness, we adopt a Moore-Penrose-type approach: among all admissible pairs $(\mathbf{p}, \mathbf{q})$ consistent with the observed boundary data, we select the one for which the corresponding displacement field $\mathbf{u}$ attains some minimal $L^2$-norm.  One of the central theoretical results of this paper is a rigorous proof that this Moore-Penrose-type selection yields a unique solution to the inverse problem.

 The inverse problem of recovering the initial conditions $(\mathbf{p}, \mathbf{q})$ in the elastic wave equation has direct relevance in a range of real-world applications involving wave propagation in elastic media. One prominent example is seismology, where determining the initial displacement and velocity of seismic waves from surface measurements is crucial for understanding earthquake dynamics and imaging the Earth's interior~\cite{aki2002quantitative, larmat2006time, norville2013time, virieux2009overview}. In such settings, neither the initial displacement nor the velocity field is fully known, and reconstructing them from boundary sensors can provide critical insights into subsurface structures and fault zones.
Another important domain is nondestructive testing, where elastic waves are used to detect internal defects in solid materials. By measuring wave responses on the surface of an object, one can infer internal initial disturbances caused by cracks or inclusions~\cite{rose2014ultrasonic}. Similarly, in medical elastography, elastic waves are used to probe tissue stiffness and diagnose abnormalities such as tumors or fibrosis. Accurate reconstruction of the initial mechanical response within tissue, based on measurements from external transducers, is essential for producing high-resolution diagnostic images~\cite{alves2009full, fink2001acoustic,  green2019ultrasound, sarvazyan1998elasticity}.
These applications often involve anisotropic and inhomogeneous materials, such as layered geological media or biological tissues, which motivates the study of the elastic wave equation in such general settings. Moreover, in practice, only boundary data are available, and the internal state must be inferred indirectly. The ability to recover initial displacement and velocity fields from such data, particularly under minimal a priori assumptions, is thus of considerable practical interest.

Solving the inverse source problem, and in particular the inverse initial data problem, for the elasticity equation is challenging due to the coexistence of compressional and shear waves.
Several methods have been proposed in the literature to address inverse initial data problems for wave-type equations, including the elastic wave equation. A fundamental theoretical tool is the use of Carleman estimates, which have been successfully applied to establish uniqueness and stability results when partial initial data is known. These estimates provide strong conditional uniqueness guarantees and are thoroughly treated in the literature~\cite{isakov2006inverse, yamamoto2009carleman}. For numerical reconstruction, time-reversal methods offer an elegant approach by exploiting the time-symmetry of wave equations. Originally developed in the context of acoustics~\cite{fink1997time}, they have since been adapted to elastodynamics for reconstructing sources and initial states, particularly when full boundary data is available.
Another class of approaches relies on optimization-based formulations, where the initial data are obtained by minimizing a misfit between observed and simulated boundary data, often under regularization. These methods are flexible and effective in practical scenarios, especially when adjoint-state techniques are used to compute gradients efficiently~\cite{engl1996regularization}. In cases where the forward problem is linear and underdetermined, the Moore-Penrose pseudoinverse framework provides a principled way to select a unique solution by enforcing a minimal-norm criterion. This idea underlies many regularization schemes and is particularly well-suited for inverse problems with non-unique solutions~\cite{hansen2010discrete}. For simple geometries, spectral decomposition methods can also be applied, leveraging eigenfunction expansions to reconstruct the initial data from boundary information, though their applicability is limited to highly idealized settings.
Together, these approaches form a diverse toolkit, each with specific strengths: Carleman estimates provide theoretical uniqueness, time-reversal offers intuitive and direct numerical recovery, optimization and regularization frameworks accommodate noisy data, and pseudoinverse-based methods yield canonical solutions in ill-posed scenarios.

On the other hand, we refer the reader to the topic review~\cite{bonnet2005inverse}, the book~\cite{ammari2015mathematical}, and the references therein for the inverse source problem in the time-harmonic regime, as well as to~\cite{ammari2013time,bao2018inverse} for studies addressing the time-domain formulation, where source terms appear explicitly in the elastic equation rather than in the initial conditions. 
In~\cite{ammari2013time}, a time-reversal technique was employed to reconstruct the source function. A crucial requirement of this approach is access to the wave field at the final time $ t = T$ throughout the domain $ \Omega, $ which can be estimated when boundary reflections are negligible. However, in our setting, the time-reversal method is inapplicable since estimating the wave field at the final time is challenging, especially due to the lack of both initial data.
The work in~\cite{bao2018inverse} also addresses inverse source problems for elasticity in both frequency and time domains. Their approach relies on explicit Green's functions, which restricts applicability to homogeneous and unbounded media. In contrast, the present study considers a fully inhomogeneous and anisotropic elastic medium without imposing structural assumptions on the elasticity tensor \( \mathbb{C} \), such as isotropy or scalar-matrix form. To the best of our knowledge, this general inverse problem has not been previously investigated.

We now discuss additional aspects of the problem.
Inverse problems of identifying the body force for the elastic wave equation have been studied under various physical and mathematical settings. In the case of a homogeneous isotropic medium with constant Lam\'e parameters, several works have investigated the recovery of the body force term in the Lam\'e system using boundary observations. For instance, early-time measurements have been used to reconstruct moment tensors and source locations in gravitational models \cite{baldassari2024early}. Time-separable body forces have been addressed using Fourier-based regularization methods in bounded domains \cite{trong2009determination}. In full-space configurations, time-derivative Dirac impulses as sources have been reconstructed via weighted time-reversal strategies \cite{ammari2011time}. Extensions of time-reversal methods to elastic systems with separable source forms have also been explored \cite{brevis2019source}. 
In the context of thermoacoustic tomography, the problem of recovering initial displacement from boundary measurements has attracted attention. Under appropriate assumptions on the Lam\'e parameters, uniqueness and reconstruction strategies have been established \cite{tittelfitz2012thermoacoustic}, and convergence of the time-reversal-based Neumann series approach has been demonstrated without restrictive assumptions on wave speeds \cite{katsnelson2018convergence}.
When the elastic properties of the medium are neglected, the inverse source problem can be modeled using scalar hyperbolic equations. This scalar version of the inverse source problem has been the subject of extensive research in the literature; see, for example, \cite{Acosta:jde2018, Ammarietal:cm2011,Ammarielal:sp2012, Burgholzer:pspie2007,DoKunyansky:ip2018,fink2001acoustic,Duvaut1976,Haltmeier:cma2013,Kowar:SISI2014, LeNguyenNguyenPowell:JOSC2021,Natterer:ipi2012,NguyenKlibanov:ip2022, Linh:ipi2009}.

In this work, we introduce a novel orthonormal basis for the time variable, constructed by combining Legendre polynomials with an exponential weight. This Legendre polynomial--exponential basis enjoys key approximation properties such as orthogonality, smoothness, and non-degeneracy. These structural features enable the stable representation of time-dependent functions and spectrally facilitate differentiation of time components. Compared to previous bases (e.g., the polynomial--exponential basis in \cite{Klibanov:jiip2017}), the Legendre--exponential basis provides enhanced numerical stability and approximation quality, especially near the initial time.

Building on this basis, we propose a time-dimensional reduction method to solve the inverse problem of reconstructing initial displacement and velocity fields in elastic wave equations. The idea is to express the space-time solution as a finite series in the time basis, reducing the inverse problem to a sequence of elliptic PDEs in the spatial domain. This approach significantly reduces computational complexity and transforms the ill-posed inverse problem into a well-structured optimization problem over Fourier coefficients.

To justify the method, we prove a convergence theorem which ensures that, under suitable choices of regularization parameter and truncation level, the reduced solution converges strongly to the true solution in appropriate Sobolev spaces. This theoretical result guarantees that the reconstruction is reliable, even in the presence of noise, and provides a rigorous foundation for the proposed numerical scheme.

The remainder of the paper is organized as follows. Section~\ref{sec:time_basis} introduces the Legendre polynomial-exponential basis and discusses its crucial structural properties that motivate its use in the proposed approach. Section~\ref{sec_timereduce} presents the time-dimensional reduction method based on this basis. Section~\ref{sec_convergence} is devoted to establishing convergence results and stability estimates for the regularized solution. Section~\ref{sec_num} contains several numerical experiments that demonstrate the accuracy and robustness of the method under different reconstruction scenarios. Finally, Section~\ref{sec_concluding} offers concluding remarks.

\section{The Legendre polynomial-exponential basis}\label{sec:time_basis}

In this section, we introduce the Legendre polynomial-exponential basis and highlight several of its key properties that are instrumental in establishing the convergence of the time-dimensional reduction method. This basis is formed by multiplying the classical Legendre polynomials by an exponential function. Compared to the polynomial-exponential basis introduced in~\cite{Klibanov:jiip2017}, the Legendre polynomial-exponential basis possesses structural advantages that make it more suitable for the convergence of the time reduction model.

Let $n \geq 0$ be fixed. The Legendre polynomial of degree $n$ on the interval $(-1, 1)$ is defined by Rodrigues' formula
\begin{equation}
P_n(x) = \frac{1}{2^n n!} \frac{{\rm d}^n}{{\rm d}x^n} \left( x^2 - 1 \right)^n.
\label{Rodrigues}
\end{equation}
To construct a corresponding orthonormal family on the finite interval $(0, T)$, the change of variables $x = \frac{2t}{T} - 1$ is applied, mapping $(0, T)$ onto $(-1, 1)$. This yields the transformed functions
\begin{equation}
Q_n(t) =  
\sqrt{\frac{2n + 1}{T}} P_n\left(\frac{2t}{T} - 1\right).
\label{Q_n_def}
\end{equation}
The set $\{ Q_n \}_{n \geq 0}$ forms an orthonormal basis for the Hilbert space $L^2(0, T)$.
Since the Legendre polynomials satisfy the self-adjoint equation
\begin{equation}
\left[ (1 - x^2) P_n'(x) \right]' + n(n+1) P_n(x) = 0, \quad x \in (-1, 1),
\label{seft_adjoint_P}
\end{equation}
by using the product rule with the change of variable $t \mapsto x = 2t/T - 1$, we can derive from \eqref{seft_adjoint_P} the analogous self-adjoint differential equation for $Q_n$, read as
\begin{equation}
\left[ t(T - t) Q_n'(t) \right]' + n(n + 1) Q_n(t) = 0, \quad t \in (0, T).
\label{2.11111}
\end{equation}

Defining
\begin{equation}
\Psi_n(t) = e^t Q_n(t),
\quad
t \in (0, T), n \in \mathbb{N}.
\label{2.1}
\end{equation}
The set of functions $\{\Psi_n\}_{n \geq 0}$ form an orthonormal basis of the weighted Hilbert space
\[
L^2_{ e^{-2t}}(0, T) = \left\{ u : \int_0^T e^{-2t} |u(t)|^2 \, {\rm d}t < \infty \right\},
\]
equipped with the inner product
\[
\langle u, v \rangle_{L_{e^{-2t}}^2(0, T)} = \int_0^T e^{-2t} u(t) v(t) \, {\rm d}t.  
\]

\begin{Definition}[Legendre-polynomial exponential basis]
The set $\{\Psi_n\}_{n \geq 0}$, defined in \eqref{2.1}, is referred to as the orthonormal Legendre-exponential basis of the space $L^2_{e^{-2t}}(0,T)$. 
\end{Definition}

The introduction of this basis represents one of the central conceptual contributions of the paper. It provides the foundational framework for constructing the time-dimensional reduction method and offers structural advantages essential for establishing convergence estimates and understanding the behavior of the time-dimensional reduction model, which approximates the original inverse problem. See Section \ref{sec_timereduce} for details. We introduce several concepts associated with the Legendre-polynomial exponential basis, which play a central role in our time dimension reduction framework.

\begin{Definition}

\begin{enumerate}
    \item For any $u \in L^2((0, T); L^2(\Omega))$, the $n$-th Fourier mode of $u$ with respect to the Legendre-polynomial exponential basis is defined as
    \begin{equation}
        u_n(\mathbf{x}) = \left\langle u(\mathbf{x}, \cdot), \Psi_n \right\rangle_{L^2_{e^{-2t}}(0, T)} = \int_0^T e^{-2t} u(\mathbf{x}, t) \Psi_n(t) \, {\rm d}t
        \label{Fourier_mode}
    \end{equation}
for $\x \in \Omega$.
    \item Given an integer $N \geq 0$ and a vector $U = \begin{bmatrix} u_0 & u_1 & \dots & u_N \end{bmatrix}^\top \in L^2(\Omega)^{N + 1}$, we define the space-time reconstruction (or expansion) of $U$ as
    \begin{equation}
        \mathbb{S}^N[U](\mathbf{x}, t) = \sum_{n = 0}^N u_n(\mathbf{x}) \Psi_n(t),
        \label{space_time_expansion}
    \end{equation}
    for all $(\mathbf{x}, t) \in \Omega_T:= \Omega \times (0, T)$.

    \item For $N \geq 0$, we define the finite-dimensional subspace
    \begin{equation}
        \mathbb{V}_N = \operatorname{span} \left\{ \Psi_0, \Psi_1, \dots, \Psi_N \right\} \subset L^2_{e^{-2t}}(0, T).
        \label{VN}
    \end{equation}
    Given a function $u \in L^2((0, T); L^2(\Omega))$, its orthogonal projection onto $\mathbb{V}_N$ in the time variable is given by
    \begin{equation}
        P^N u(\mathbf{x}, t) = \sum_{n = 0}^N \left\langle u(\mathbf{x}, \cdot), \Psi_n \right\rangle_{L^2_{e^{-2t}}(0, T)} \Psi_n(t),
        \label{PN}
    \end{equation}
    for all $(\mathbf{x}, t) \in \Omega_T$.
\end{enumerate}
\label{def2.2}
\end{Definition}

\begin{Proposition}
 For every $n \geq 0$, the function $\Psi_n(t) = e^t Q_n(t)$ is infinitely differentiable on the interval $(0, T)$, and none of its derivatives of any order is identically zero on this interval.
\label{prop2.1}
\end{Proposition}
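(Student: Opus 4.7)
The plan is to exploit the fact that $\Psi_n$ is an exponential times a polynomial of degree exactly $n$, which reduces the statement to elementary facts about polynomials.

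First, I would observe from Rodrigues' formula \eqref{Rodrigues} and the affine rescaling in \eqref{Q_n_def} that $Q_n$ is a polynomial in $t$ of degree exactly $n$ with nonzero leading coefficient (the top-order term of $\frac{{\rm d}^n}{{\rm d}x^n}(x^2-1)^n$ survives the rescaling). Consequently $\Psi_n = e^t Q_n$ is a product of $C^\infty$ functions, which immediately yields the infinite differentiability claim on $(0,T)$ (in fact on all of $\mathbb{R}$).

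For the non-degeneracy claim, the key step is to apply the Leibniz rule for the $k$-th derivative of a product to obtain
$$\Psi_n^{(k)}(t) = e^t R_k(t), \qquad R_k(t) := \sum_{j=0}^{\min(k,n)} \binom{k}{j} Q_n^{(j)}(t),$$
where the upper limit of summation reflects the fact that $Q_n^{(j)} \equiv 0$ for $j > n$. I would then argue that $R_k$ has degree exactly $n$: each summand $Q_n^{(j)}$ has degree $n-j$, so the only term contributing to the coefficient of $t^n$ is the $j = 0$ term (namely $Q_n$ itself), whose leading coefficient is nonzero by the first step. In particular $R_k$ is a nonzero polynomial. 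Since $e^t > 0$ everywhere, the zero set of $\Psi_n^{(k)}$ coincides with the zero set of $R_k$, which is finite. Hence $\Psi_n^{(k)}$ cannot vanish identically on $(0,T)$.

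There is no substantial obstacle in this argument — the whole proof reduces to tracking the leading-order term through the Leibniz expansion. The only point that deserves a moment of care is confirming that differentiating $\Psi_n$ does not cancel the degree-$n$ contribution, which is ruled out precisely because this contribution comes from the single summand with $j = 0$ and cannot be produced by any higher-order derivative of $Q_n$.
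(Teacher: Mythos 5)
Your argument is correct and follows essentially the same route as the paper's proof: apply the Leibniz rule to write $\Psi_n^{(k)} = e^t \cdot (\text{polynomial})$, observe that only the $j=0$ term contributes the degree-$n$ leading coefficient so the polynomial factor has degree exactly $n$ and is therefore nonzero, and conclude since $e^t$ never vanishes. Your version is in fact slightly cleaner, as it makes the upper summation limit $\min(k,n)$ explicit and avoids the paper's somewhat redundant contradiction phrasing.
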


\begin{proof}
Since $\Psi_n(t) = e^t Q_n(t)$ is the product of the analytic function $e^t$ and the polynomial $Q_n(t)$ of degree $n$, it follows that $\Psi_n$ is infinitely differentiable on $(0, T)$.
To show that $\Psi_n^{(k)}$ is not identically zero for any $k \geq 0$, we apply the Leibniz rule:
\begin{equation}
\Psi_n^{(k)}(t) = \frac{{\rm d}^k}{{\rm d}t^k}(e^t Q_n(t)) = \sum_{j=0}^k \binom{k}{j} e^t Q_n^{(j)}(t) = e^t \left( Q_n(t) + \sum_{j=1}^k \binom{k}{j} Q_n^{(j)}(t) \right).
\label{2.3}
\end{equation}
Here, $Q_n(t)$ is a degree-$n$ polynomial, while each derivative $Q_n^{(j)}(t)$ for $j \geq 1$ is a polynomial of degree strictly less than $n$. Thus, the expression inside the parentheses in \eqref{2.3} remains a degree-$n$ polynomial.

Suppose, for contradiction, that $\Psi_n^{(k)}(t) \equiv 0$ on $(0, T)$. Then the polynomial
\[
Q_n(t) + \sum_{j=1}^k \binom{k}{j} Q_n^{(j)}(t)
\]
must vanish identically. However, since it is a polynomial of degree $n$, this would imply it has more than $n$ roots, which is only possible if it is the zero polynomial. This is a contradiction, and hence, $\Psi_n^{(k)}(t)$ cannot vanish identically.
\end{proof}

\begin{Lemma}
    For every \( k \in \mathbb{N} \), there exists a constant \( C_1 > 0 \), depending only on \( k \) and \( T \), such that
    \begin{equation} \label{Hk-inequality}
\sum_{n = 0}^\infty n^{2k} \left| \langle u, \Psi_n \rangle_{L^2_{e^{-2t}}(0, T)} \right|^2 \leq C \|u\|_{H^k(0, T)}^2,
\end{equation}
for all $u \in H^k(0, T)$ where $C$ is a constant depending only on $T.$
\label{lem2.1}
\end{Lemma}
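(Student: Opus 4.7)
My plan is to reduce the weighted inner product against $\Psi_n$ to a plain $L^2(0,T)$ inner product against the orthonormal Legendre polynomial $Q_n$, and then exploit the fact that each $Q_n$ is an eigenfunction of the Sturm-Liouville operator appearing in \eqref{2.11111}, whose coefficient $t(T-t)$ vanishes at both endpoints.

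First I would set $v(t) := e^{-t} u(t)$ and compute
\begin{equation*}
\langle u, \Psi_n\rangle_{L^2_{e^{-2t}}(0,T)} = \int_0^T e^{-t} u(t)\, Q_n(t)\, dt = \langle v, Q_n\rangle_{L^2(0,T)}.
\end{equation*}
Since $e^{-t}$ and all its derivatives are bounded on $[0,T]$, the Leibniz rule gives $\|v\|_{H^k(0,T)} \leq C(T,k)\|u\|_{H^k(0,T)}$. The problem therefore reduces to bounding $\sum_{n} n^{2k} |\langle v, Q_n\rangle_{L^2}|^2$ by $\|v\|_{H^k}^2$.

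Next I would introduce the operator $L w := (t(T-t)w')'$, so that \eqref{2.11111} reads $L Q_n = -n(n+1) Q_n$. Because $t(T-t)$ vanishes at $0$ and $T$, two integrations by parts produce no boundary terms and give $\langle w, L Q_n\rangle = \langle L w, Q_n\rangle$ for any $w$ smooth enough on $[0,T]$. Iterating this identity yields
\begin{equation*}
\langle L^m v, Q_n\rangle_{L^2} = (-1)^m \bigl(n(n+1)\bigr)^m \langle v, Q_n\rangle_{L^2}, \qquad m \geq 0.
\end{equation*}
For even $k = 2m$, Parseval then gives $\sum_{n} (n(n+1))^{2m}|\langle v,Q_n\rangle|^2 = \|L^m v\|_{L^2}^2 \leq C\|v\|_{H^{2m}}^2$, the last inequality because $L$ is a second-order operator with smooth coefficients. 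For odd $k = 2m+1$, I would peel off one factor of $n(n+1)$ and write
\begin{equation*}
\sum_{n}\bigl(n(n+1)\bigr)^{2m+1}|\langle v,Q_n\rangle|^2 = \sum_{n} n(n+1)\,|\langle L^m v, Q_n\rangle|^2 = \langle -L(L^m v), L^m v\rangle_{L^2},
\end{equation*}
which, after one integration by parts, equals $\int_0^T t(T-t)\bigl((L^m v)'(t)\bigr)^2 dt \leq (T^2/4)\|(L^m v)'\|_{L^2}^2 \leq C\|v\|_{H^{2m+1}}^2$. Combining these two cases with the elementary bound $n^{2k} \leq (n(n+1))^k$ and the control of $\|v\|_{H^k}$ by $\|u\|_{H^k}$ yields the stated inequality.

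The main obstacle I anticipate is ensuring that the boundary contributions in the repeated integrations by parts actually vanish without imposing any boundary conditions on $v$; this is precisely where the vanishing of the Sturm-Liouville weight $t(T-t)$ at $0$ and $T$ plays a crucial role. A minor annoyance is the need to treat even and odd $k$ separately, since $L$ is second-order and hence $L^{k/2}$ is a genuine differential operator only when $k$ is even; for odd $k$ the quadratic form $\langle -Lw, w\rangle = \int_0^T t(T-t)(w'(t))^2 dt$ absorbs the extra factor of $n(n+1)$ at the cost of only a single derivative, which is exactly what is needed to close the estimate at the level of $H^k$ rather than $H^{2k}$.
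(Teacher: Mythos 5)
Your proposal is correct and follows essentially the same route as the paper's proof: both reduce the weighted pairing to $\langle e^{-t}u, Q_n\rangle_{L^2(0,T)}$, exploit the Sturm--Liouville eigenrelation $-\bigl(t(T-t)Q_n'\bigr)' = n(n+1)Q_n$ with self-adjointness coming from the vanishing of the weight $t(T-t)$ at the endpoints, apply Parseval for even $k=2m$, and use the spectral energy identity $\int_0^T t(T-t)\,|w'|^2\,{\rm d}t = \sum_n n(n+1)|\langle w,Q_n\rangle|^2$ for odd $k=2m+1$. The only differences are cosmetic (your $L$ is the paper's $-K$, and you spell out the boundary-term cancellation and the derivation of the energy identity, which the paper simply invokes).
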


\begin{proof}
    We define the differential operator \( K : H^2(0, T) \to L^2(0, T) \) by
\[
Ku(t) := -\frac{{\rm d}}{{\rm d}t} \left( t(T - t) \frac{{\rm d}u}{{\rm d}t} \right), \quad \text{for } u \in H^2(0, T),\ t \in (0, T).
\]
From the self-adjoint differential equation \eqref{2.11111}, it follows that
\[
K Q_n = n(n + 1) Q_n.
\]
Using the eigenvalue relation we have \[ K^m Q_n = n^m(n + 1)^m Q_n.\]
 Hence, we obtain:
\begin{align*}
	\langle K^m(e^{-t} u), Q_n \rangle_{L^2(0, T)} 
	&= \langle e^{-t} u, K^m Q_n \rangle_{L^2(0, T)} \\
	&= n^m(n+1)^m \langle e^{-t} u, Q_n \rangle_{L^2(0, T)} \\
	&= n^m(n+1)^m \int_0^T e^{-2t} u(t) \Psi_n(t) \, {\rm d}t \\
	&= n^m(n + 1)^m \langle u, \Psi_n \rangle_{L^2_{e^{-2t}}(0, T)} = n^m(n + 1)^m u_n,
\end{align*}
where \( u_n := \langle u, \Psi_n \rangle_{L^2_{e^{-2t}}(0, T)} \). 

We consider two cases:

Case 1: \( k = 2m \) for some integer \( m \geq 1 \).

Applying Parseval's identity in \( L^2(0, T) \) for the orthonormal basis \( \{ Q_n \} \), we obtain:
\[
\sum_{n = 0}^\infty n^{2m}(n + 1)^{2m} |u_n|^2 
= \sum_{n = 0}^\infty \left| \langle K^m(e^{-t} u), Q_n \rangle_{L^2(0, T)} \right|^2 
= \| K^m(e^{-t} u) \|_{L^2(0, T)}^2.
\]
Since multiplication by \( e^{-t} \) is a bounded operator on \( H^{2m}(0, T) \), we deduce that
\[
\| K^m(e^{-t} u) \|_{L^2(0, T)} \leq C \| u \|_{H^{2m}(0, T)},
\]
which implies \eqref{Hk-inequality} in this case.

Case 2: \( k = 2m + 1 \).
In this case, we use the spectral energy identity for \( \{ Q_n \} \), which states:
\[
\int_0^T t(T - t) \left| \left( K^m(e^{-t} u) \right)' \right|^2 {\rm d}t = \sum_{n = 1}^\infty n(n + 1) \left| \langle K^m(e^{-t} u), Q_n \rangle_{L^2(0, T)} \right|^2.
\]
Substituting the expression for the Fourier coefficients, we have:
\[
\langle K^m(e^{-t} u), Q_n \rangle_{L^2(0, T)} = n^m(n + 1)^m u_n,
\]
and therefore,
\[
\int_0^T t(T - t) \left| \left( K^m(e^{-t} u) \right)' \right|^2 {\rm d}t = \sum_{n = 1}^\infty n^{2m+1}(n + 1)^{2m+1} |u_n|^2,
\]
which establishes the desired estimate \eqref{Hk-inequality} for odd \( k \) as well.
\end{proof}

\begin{Lemma}
    For all $n \geq 1$,
    \begin{equation*}
        \|\Psi_n'\|_{L^2_{e^{-2t}}(0, T)} \leq C n^{3/2},
        \quad
        \mbox{and}
        \quad
        \|\Psi_n''\|_{L^2_{e^{-2t}}(0, T)} \leq C n^{7/2}
    \end{equation*}
    where $C$ is a constant depending only on $T$.
    \label{lem2.2}
\end{Lemma}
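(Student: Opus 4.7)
The plan is to exploit the cancellation between the exponential factor in $\Psi_n=e^t Q_n$ and the weight $e^{-2t}$ in the norm, reducing everything to unweighted $L^2(0,T)$ norms of derivatives of the rescaled Legendre polynomials, and then to unweighted $L^2(-1,1)$ norms of $P_n'$ and $P_n''$ via the change of variable $x=2t/T-1$.

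First, by the product rule I write $\Psi_n'(t)=e^t\bigl(Q_n(t)+Q_n'(t)\bigr)$ and $\Psi_n''(t)=e^t\bigl(Q_n(t)+2Q_n'(t)+Q_n''(t)\bigr)$. Multiplying by $e^{-2t}$ in the norm cancels $e^{2t}$, so
\[
\|\Psi_n'\|_{L^2_{e^{-2t}}(0,T)}^2=\|Q_n+Q_n'\|_{L^2(0,T)}^2\le 2\bigl(1+\|Q_n'\|_{L^2(0,T)}^2\bigr),
\]
and similarly $\|\Psi_n''\|_{L^2_{e^{-2t}}(0,T)}^2\le 3\bigl(1+4\|Q_n'\|_{L^2(0,T)}^2+\|Q_n''\|_{L^2(0,T)}^2\bigr)$, using $\|Q_n\|_{L^2(0,T)}=1$. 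Thus the problem is reduced to proving $\|Q_n'\|_{L^2(0,T)}^2\le Cn^3$ and $\|Q_n''\|_{L^2(0,T)}^2\le Cn^7$.

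Next, from the definition \eqref{Q_n_def} together with $x=2t/T-1$ one directly computes
\[
\|Q_n^{(k)}\|_{L^2(0,T)}^2=\frac{2n+1}{T}\left(\frac{2}{T}\right)^{2k}\int_{-1}^{1}\bigl(P_n^{(k)}(x)\bigr)^2\,\frac{T}{2}\,\mathrm{d}x=\frac{(2n+1)\,2^{2k-1}}{T^{2k}}\,\|P_n^{(k)}\|_{L^2(-1,1)}^2.
\]
So I need $\|P_n'\|_{L^2(-1,1)}^2\le Cn^2$ and $\|P_n''\|_{L^2(-1,1)}^2\le Cn^6$. These follow from the classical derivative-of-Legendre expansion $P_n'=\sum_{k<n,\;k+n\ \text{odd}}(2k+1)P_k$. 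Parseval in the orthogonal basis $\{P_k\}$ with $\|P_k\|^2=2/(2k+1)$ gives $\|P_n'\|_{L^2(-1,1)}^2=\sum_k 2(2k+1)=n(n+1)$ after the elementary arithmetic-progression sum (split into the even and odd $n$ cases). Iterating the expansion, a short computation yields $P_n''=\sum_{j\le n-2,\;n-j\ \text{even}}a_{j,n}P_j$ with $a_{j,n}=(2j+1)(n-j)(n+j+1)/2$, and Parseval then bounds
\[
\|P_n''\|_{L^2(-1,1)}^2=\sum_{j}(2j+1)\,\frac{(n-j)^2(n+j+1)^2}{2}\le C n^6,
\]
since each of the at most $n/2$ summands is $O(n^5)$.

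Combining the displayed pieces yields $\|Q_n'\|_{L^2(0,T)}^2\le Cn^3$ and $\|Q_n''\|_{L^2(0,T)}^2\le Cn^7$, and therefore the desired estimates for $\Psi_n'$ and $\Psi_n''$. The main technical step, and the only place where care is required, is the double-expansion computation for $\|P_n''\|_{L^2(-1,1)}^2$: the $a_{j,n}$ must be computed by summing $(2k+1)$ over the admissible intermediate indices with the correct parity constraints, and only after this arithmetic is done does the clean $O(n^6)$ bound appear with no stray logarithmic factor.
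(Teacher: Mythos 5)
Your proposal is correct and follows essentially the same route as the paper: cancel the weight $e^{-2t}$ against the factor $e^t$ in $\Psi_n$ to reduce to unweighted $L^2(0,T)$ norms of $Q_n'$ and $Q_n''$, pass to $L^2(-1,1)$ norms of $P_n'$ and $P_n''$ via $x=2t/T-1$, and bound these using the expansion of $P_n'$ in the Legendre basis together with orthogonality and $\|P_k\|^2_{L^2(-1,1)}=2/(2k+1)$. The only (harmless) difference is the second-derivative step, where you evaluate $\|P_n''\|^2_{L^2(-1,1)}$ exactly by Parseval from the fully iterated double expansion with coefficients $a_{j,n}=(2j+1)(n-j)(n+j+1)/2$, whereas the paper applies the triangle inequality to $P_n''=\sum_k\bigl(2n-(4k+1)\bigr)P_{n-(2k+1)}'$ together with $\|P_k'\|_{L^2(-1,1)}\le Ck$; both give $\|P_n''\|^2_{L^2(-1,1)}\le Cn^6$ and hence the stated bounds.
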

\begin{proof}
 For convenience,  throughout this proof, we use $C$ to denote a generic positive constant that may vary from line to line but depends only on $T$.

By differentiating Rodrigues' formula \eqref{Rodrigues} and using direct algebra, we obtain
\begin{equation}
P_n'(x) = (2n - 1) P_{n-1}(x) + P_{n-2}'(x), \quad \text{for all } n \geq 2.
\label{2.8}
\end{equation}
By recursively applying the identity \eqref{2.8}, we derive the expansion
\begin{align}
P_n'(x)
&= (2n - 1) P_{n-1}(x) + P_{n-2}'(x)  \notag\\
&= (2n - 1) P_{n-1}(x) + (2n - 5) P_{n-3}(x) + P_{n-4}'(x) \notag\\
&= \sum_{k = 0}^{\lfloor n/2 \rfloor - 1} \left(2n - (4k + 1)\right) P_{n - (2k + 1)}(x) + P_r'(x),
\label{2.1313}
\end{align}
where
\[
r =
\begin{cases}
0, & \text{if } n \text{ is even}, \\
1, & \text{if } n \text{ is odd},
\end{cases}
\quad \mbox{and }
\quad
P'_r(x) =
\begin{cases}
0, & \text{if } n \text{ is even}, \\
1, & \text{if } n \text{ is odd}.
\end{cases}
\]
It follows from \eqref{2.1313} that
\begin{equation}
	\left\|P_n' - P_r'\right\|^2_{L^2(-1, 1)} = 
	\left\| \sum_{k = 0}^{\lfloor n/2 \rfloor - 1} \left(2n - (4k + 1)\right) P_{n - (2k + 1)} \right\|^2_{L^2(-1, 1)}.
\end{equation}

Since the Legendre polynomials $\{P_n\}_{n \geq 0}$ form an orthogonal basis in $L^2(-1, 1)$, and satisfy the norm identity
\[
\|P_n\|_{L^2(-1, 1)}^2 = \frac{2}{2n + 1},
\]
we can compute the squared norm of the truncated expansion:
\begin{align}
\|P_n' -  P_r'\|_{L^2(-1, 1)}^2
&= \sum_{k = 0}^{\lfloor n/2 \rfloor - 1} \left(2n - (4k + 1)\right)^2 \cdot \|P_{n - (2k + 1)}\|_{L^2(-1, 1)}^2 \notag\\
&= \sum_{k = 0}^{\lfloor n/2 \rfloor - 1} \left(2n - (4k + 1)\right)^2 \cdot \frac{2}{2n - (4k + 1)} \notag \\
&= 2 \sum_{k = 0}^{\lfloor n/2 \rfloor - 1} \left(2n - (4k + 1)\right) \leq C n^2,
\label{2.14}
\end{align}
for some constant $C > 0$ independent of $n$.
Since $P'_r$ is either $0$ or $1$, it follows from \eqref{2.14} that
 \begin{equation*}
 	\|P_n'\|_{L^2(-1, 1)}^2 \leq C n^2.
 \end{equation*}
Using the chain rule and the definition of \( Q_n \) in \eqref{Q_n_def}, we obtain the analog estimate
\begin{equation}
\|Q_n'\|_{L^2(0, T)}  \leq Cn^{3/2}.
\label{2.9}
\end{equation}
We next estimate \( \|\Psi_n'\|_{L^2_{e^{-2t}}(0, T)} \). From \eqref{2.1} and \eqref{2.9}, we have
\begin{align*}
    \|\Psi_n'\|_{L^2_{e^{-2t}}(0, T)}^2 
    &= \int_0^T e^{-2t} |\Psi_n'(t)|^2 {\rm d}t 
    = \int_0^T e^{-2t} |e^t Q_n'(t) + e^t Q_n(t)|^2 {\rm d}t \\
    &= \int_0^T |Q_n'(t) + Q_n(t)|^2 {\rm d}t
    \leq 2 \|Q_n'\|_{L^2(0,T)}^2 + 2 \|Q_n\|_{L^2(0,T)}^2 
    \leq C n^3.
\end{align*}
Therefore,
\[
\|\Psi_n'\|_{L^2_{e^{-2t}}(0, T)} \leq C n^{3/2}.
\]

To estimate $\|\Psi_n''\|_{L^2_{e^{-2t}}(0, T)}$, we begin by estimating the second derivative of the Legendre polynomial $P_n$. 
From  the identity \eqref{2.1313} and the fact that $ P_r''(x) = 0$, differentiation yields \[
P_n''(x) = \sum_{k=0}^{\left\lfloor n/2 \right\rfloor - 1} (2n - (4k + 1)) P_{n - (2k + 1)}'(x).
\]

Using the triangle inequality and the known estimate $\|P_k'\|_{L^2(-1, 1)} \leq C k$, we deduce
\[
\|P_n''\|_{L^2(-1, 1)}\leq \sum_{k = 0}^{\left\lfloor n/2 \right\rfloor-1} (2n - (4k + 1)) \cdot \|P_{n - (2k + 1)}'\|_{L^2(-1, 1)} \leq C \sum_{k=0}^{\left\lfloor\frac{n-1}{2}\right\rfloor} n (n - 2k) \leq C n^3.
\]

We now estimate $\|Q_n''\|_{L^2(0, T)}$ based on the definition \eqref{Q_n_def}.
Differentiating twice and applying the chain rule gives
\[
Q_n''(t) = \frac{4 \sqrt{2n + 1}}{T^{5/2}} \cdot P_n''\left( \frac{2t}{T} - 1 \right).
\]
By the change of variable $x = \frac{2t}{T} - 1$, it follows that
\[
\|Q_n''\|_{L^2(0, T)}^2 = \frac{4(2n + 1)}{T^3} \cdot \|P_n''\|_{L^2(-1, 1)}^2 \leq C n^7.
\]

Finally, to estimate $\|\Psi_n''\|_{L^2_{e^{-2t}}(0, T)}$, we use the identity
\[
\Psi_n(t) = e^t Q_n(t) \quad \Rightarrow \quad \Psi_n''(t) = e^t(Q_n''(t) + 2 Q_n'(t) + Q_n(t)).
\]
Thus,
\[
\|\Psi_n''\|_{L^2_{e^{-2t}}(0, T)}^2 = \int_0^T e^{-2t} |\Psi_n''(t)|^2 {\rm d}t = \int_0^T |Q_n''(t) + 2 Q_n'(t) + Q_n(t)|^2 {\rm d}t.
\]
Applying the inequality $(a + b + c)^2 \leq 3(a^2 + b^2 + c^2)$ and using the estimates $\|Q_n\|_{L^2(0, T)} = 1$ and $\|Q_n'\|_{L^2(0, T)} \leq C n^{3/2}$, we obtain
\[
\|\Psi_n''\|_{L^2_{e^{-2t}}(0, T)}^2 \leq 3\left( \|Q_n''\|_{L^2(0, T)}^2 + 4\|Q_n'\|_{L^2(0, T)}^2 + \|Q_n\|_{L^2(0, T)}^2 \right) \leq C(n^7 + n^3 + 1) \leq C n^7.
\]
Hence, $\|\Psi_n''\|_{L^2_{e^{-2t}}(0, T)} \leq C n^{\frac{7}{2}}$.

\end{proof}

The following property plays the key role in the study of the behavior of the time dimensional reduction model in Section \ref{sec_timereduce}

\begin{Theorem} Let $p\geq 0$ and $u\in L^2((0,T); H^p(\Omega))$.
If  the series
\begin{equation}
\sum_{n = 0}^\infty \langle u(\cdot, \cdot), \Psi_n \rangle_{L^2_{e^{-2t}}(0, T)} \, \Psi_n''(t)
\label{utt}	
\end{equation}
converges in $ L^2((0,T); H^p(\Omega))$, then the generalized derivatives $u_{tt}$ exists in 
$ L^2((0,T); H^p(\Omega))$ and the series coincides with $u_{tt}$.

(b) If $u \in H^k((0, T); H^p(\Omega))$ with $k \geq 5$ then the series \eqref{utt} converges in $ 
L^2((0,T); H^p(\Omega))$.
\label{thm2.1}
\end{Theorem}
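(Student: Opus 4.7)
My plan for part (a) is to identify the partial sums of the series~\eqref{utt} as the pointwise second time derivatives of the partial sums of $u$ itself, and then pass to the limit against test functions. Set
\[
S_N(\mathbf{x},t) = P^N u(\mathbf{x},t) = \sum_{n=0}^{N} u_n(\mathbf{x})\Psi_n(t).
\]
Term-by-term differentiation in $t$ gives $\partial_t^2 S_N(\mathbf{x},t)=\sum_{n=0}^{N} u_n(\mathbf{x})\Psi_n''(t)$, which converges to some $v\in L^2((0,T);H^p(\Omega))$ by hypothesis. Because $\{\Psi_n\}$ is an orthonormal basis of $L^2_{e^{-2t}}(0,T)$, the same expansion, now with Hilbert-space-valued coefficients $u_n\in H^p(\Omega)$, gives $S_N \to u$ in $L^2_{e^{-2t}}((0,T);H^p(\Omega))$; since $e^{-2t}$ is bounded between positive constants on $[0,T]$, this is equivalent to convergence in $L^2((0,T);H^p(\Omega))$. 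For any test function $\varphi \in C_c^\infty(\Omega_T)$, two integrations by parts in $t$ (with no boundary contribution) yield
\[
\int_{\Omega_T} \partial_t^2 S_N \,\varphi\,{\rm d}\mathbf{x}\,{\rm d}t = \int_{\Omega_T} S_N\, \partial_t^2\varphi\,{\rm d}\mathbf{x}\,{\rm d}t.
\]
Passing $N\to\infty$ identifies $\int v\varphi = \int u\,\partial_t^2\varphi$, so $u_{tt}=v$ holds as a generalized derivative in $L^2((0,T);H^p(\Omega))$.

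For part (b), the strategy is to show that the partial sums $V_N:=\sum_{n=0}^N u_n\Psi_n''$ form a Cauchy sequence in $L^2_{e^{-2t}}((0,T);H^p(\Omega))$. First, I would upgrade Lemma~\ref{lem2.1} to a vector-valued statement: applying it to $\partial_{\mathbf{x}}^\alpha u(\mathbf{x},\cdot)$ for each multi-index $|\alpha|\leq p$ and integrating over $\Omega$ (using that the Fourier coefficient operation commutes with spatial differentiation via Fubini) gives
\[
\sum_{n=0}^{\infty} n^{2k}\|u_n\|_{H^p(\Omega)}^2 \leq C\|u\|_{H^k((0,T);H^p(\Omega))}^2 < \infty.
\]
Combining this with $\|\Psi_n''\|_{L^2_{e^{-2t}}(0,T)}\leq C n^{7/2}$ from Lemma~\ref{lem2.2}, the triangle inequality followed by Cauchy--Schwarz yields, for $M<N$,
\[
\|V_N-V_M\|_{L^2_{e^{-2t}}((0,T); H^p)}
\leq C\sum_{n=M+1}^N n^{7/2}\|u_n\|_{H^p}
\leq C\Bigl(\sum_{n\geq M+1}n^{2k}\|u_n\|_{H^p}^2\Bigr)^{1/2}\Bigl(\sum_{n\geq M+1}n^{7-2k}\Bigr)^{1/2}.
\]
Under the assumption $k\geq 5$ we have $7-2k\leq -3$, so the second factor converges and its tail vanishes as $M\to\infty$; the tail of the first factor vanishes by the summability above. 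Hence $\{V_N\}$ is Cauchy, which is the desired convergence.

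The step I expect to be the main obstacle is the passage from the scalar-valued statement of Lemma~\ref{lem2.1} to its vector-valued analog in $H^p(\Omega)$. This is essentially routine once one justifies the commutation of the Fourier coefficient map with spatial derivatives and the interchange of orders of integration, but it must be set up carefully to obtain a finite $H^p$-valued bound from a pointwise-in-$\mathbf{x}$ application of the lemma. The threshold $k\geq 5$ itself emerges unavoidably from combining the polynomial bound $n^{7/2}$ on $\|\Psi_n''\|$ with a naive triangle-inequality summation; any lower threshold would require sharper, cancellation-sensitive bounds on the partial sums $\sum u_n\Psi_n''$, which are not needed for the purposes of this work.
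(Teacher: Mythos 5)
Your proposal is correct and follows essentially the same route as the paper: part (a) identifies the series limit with $u_{tt}$ by integrating by parts twice in time against test functions in $C_c^\infty(\Omega_T)$, and part (b) combines the coefficient decay from Lemma~\ref{lem2.1} with the bound $\|\Psi_n''\|_{L^2_{e^{-2t}}(0,T)} \leq C n^{7/2}$ from Lemma~\ref{lem2.2} to obtain absolute convergence for $k \geq 5$. The only cosmetic difference is that you use Cauchy--Schwarz on the tail of $\sum n^{7/2}\|u_n\|_{H^p}$ while the paper first extracts the pointwise bound $\|u_n\|_{H^p(\Omega)} \leq C n^{-k}$ and then applies the triangle inequality; both yield the same threshold.
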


\begin{proof}
(a) Define the limit function
\[
w(\mathbf{x}, t) := \lim_{N \to \infty} \sum_{n = 0}^N \langle u(\cdot, \cdot), \Psi_n \rangle_{L^2_{e^{-2t}}(0, T)} \, \Psi_n''(t).
\]
We  show that $w = u_{tt}$. Let $\varphi \in C_c^\infty(\Omega_T)$ be an arbitrary test function. We compute:
\begin{align*}
	\int_{\Omega_T} w(\mathbf{x}, t)\varphi(\mathbf{x}, t)\, {\rm d}\mathbf{x}{\rm d}t 
	&= \int_{\Omega_T} \lim_{N \to \infty}  \left( \sum_{n = 0}^N \langle u(\x, \cdot), \Psi_n \rangle_{L^2_{e^{-2t}}(0, T)} \Psi_n''(t) \right) \varphi(\mathbf{x}, t)\, {\rm d}\mathbf{x}{\rm d}t \\
	&= \lim_{N \to \infty} \sum_{n = 0}^N \int_{\Omega_T} \langle u(\x, \cdot), \Psi_n \rangle_{L^2_{e^{-2t}}(0, T)} \, \Psi_n''(t)\, \varphi(\mathbf{x}, t)\, {\rm d}\mathbf{x}{\rm d}t.
\end{align*}
This interchange of the limit and the integral above is justified by  the convergence in $L^2((0,T); H^p(\Omega))$ of the partial sums to $w$, while $\varphi$ is compactly supported and bounded.
Next, we integrate by parts in time:
\begin{align*}
	\lim_{N \to \infty} \sum_{n = 0}^N \int_{\Omega_T} \langle u(\x, \cdot), \Psi_n &\rangle_{L^2_{e^{-2t}}(0, T)} \Psi_n''(t) \varphi(\mathbf{x}, t)\, {\rm d}\mathbf{x}{\rm d}t 
	\\
	&= \lim_{N \to \infty} \sum_{n = 0}^N \int_\Omega \langle u(\mathbf{x}, \cdot), \Psi_n \rangle_{L^2_{e^{-2t}}(0, T)} \left( \int_0^T \Psi_n''(t) \varphi(\mathbf{x}, t) {\rm d}t \right) {\rm d}\mathbf{x} \\
	&=  \int_\Omega \lim_{N \to \infty} \sum_{n = 0}^N  \langle u(\mathbf{x}, \cdot), \Psi_n \rangle_{L^2_{e^{-2t}}(0, T)} \left( \int_0^T \Psi_n(t) \varphi_{tt}(\mathbf{x}, t) {\rm d}t \right) {\rm d}\mathbf{x} \\
	&= \int_{\Omega_T} u(\mathbf{x}, t) \varphi_{tt}(\mathbf{x}, t)\,{\rm d}\mathbf{x}{\rm d}t,
\end{align*}
where we used completeness of the $\{\Psi_n\}$ basis in $L^2_{e^{-2t}}(0, T)$,  convergence of the series, and the dominated convergence theorem again.
Hence, $w = u_{tt}$ in the distributional sense, and since both sides lie in $L^2((0, T); L^2(\Omega))$, the equality holds almost everywhere.

(b)
We establish the convergence of the series in $L^2((0, T); H^p(\Omega))$. From Lemmas~\ref{lem2.1} and~\ref{lem2.2}, we have the estimates
\[
\left\| \langle u(\cdot, \cdot), \Psi_n \rangle_{L^2_{e^{-2t}}(0, T)} \right\|_{H^p(\Omega)} \leq C n^{-k},
\qquad 
\| \Psi_n'' \|_{L^2_{e^{-2t}}(0, T)} \leq C n^{7/2}.
\]
Therefore,
\begin{align*}
\sum_{n = 0}^\infty \left\| \langle u(\cdot, \cdot), \Psi_n \rangle_{L^2_{e^{-2t}}(0, T)} \Psi_n''(t) \right\|_{L^2((0, T); H^p(\Omega))}
&\leq \sum_{n = 0}^\infty \left\| \langle u(\cdot, \cdot), \Psi_n \rangle_{L^2_{e^{-2t}}(0, T)} \right\|_{H^p(\Omega)} \cdot \| \Psi_n'' \|_{L^2_{e^{-2t}}(0, T)} \\
&\leq C \sum_{n = 0}^\infty n^{-k} \cdot n^{7/2} = C \sum_{n = 0}^\infty n^{7/2 - k}.
\end{align*}
The series on the right converges whenever $k \geq  5$, so the sum is convergent in $L^2((0, T); H^p(\Omega))$.
This completes the proof.
\end{proof}

\begin{remark}
The convergence result established in Theorem~\ref{thm2.1} crucially relies on the special structure and analytical properties of the Legendre polynomials $\{P_n\}$. In particular, each polynomial $P_n$ is an eigenfunction of the Sturm--Liouville operator \[ u \mapsto -\frac{{\rm d}}{{\rm d}x} \left[ (1 - x^2) \frac{{\rm d}u}{{\rm d}x} \right], \]
which endows them with strong regularity, decay properties, and orthogonality in weighted Sobolev spaces. These features directly lead to the decay estimates for the Fourier coefficients and controlled growth of their second derivatives, which are key in proving the convergence of the second-order expansion in time.

Without leveraging the spectral theory of Legendre polynomials and the corresponding eigenvalue structure, establishing convergence of the time-dimensional reduction method becomes significantly more difficult. Indeed, in our earlier works, see e.g., \cite{ HaoThuyLoc:2024, LeNguyenNguyenPark,  NguyenNguyenVu, DangNguyenVu}, the lack of such a framework posed major analytical challenges in justifying the convergence of analogous series representations.
\end{remark}

\section{The uniqueness of minimum solution and the time-dimensional reduction method} \label{sec_timereduce}

To numerically solve Problem~\ref{isp}, we aim to approximate the exact displacement field $\mathbf{u}$, which satisfies the following boundary value problem:
\begin{equation}
\begin{cases}
\displaystyle \frac{\partial^2 \mathbf{u}(\mathbf{x}, t)}{\partial t^2} = \Div \left( \mathbb{C} : \nabla \mathbf{u}(\mathbf{x}, t) \right), & \text{in } \Omega_T , \\
\mathbf{u}(\mathbf{x}, t) = \mathbf{f}^*(\mathbf{x}, t), & \text{on } \Gamma_T := \partial \Omega \times (0, T), \\
\partial_{\nu} \mathbf{u}(\mathbf{x}, t) = \mathbf{g}^*(\mathbf{x}, t), & \text{on } \Gamma_T,
\end{cases}
\label{2.1111}
\end{equation}
where $\mathbf{f}^*$ and $\mathbf{g}^*$ denote the exact boundary displacement and traction data. In practice, only noisy measurements $\mathbf{f}$ and $\mathbf{g}$ are available for computation. We will discuss the noise analysis later in Section \ref{sec_convergence}. In this section, we study the minimum-norm solution and the time-dimensional reduction method to compute it.

It is well known that problem~\eqref{2.1111} admits at least one solution, as it corresponds to the restriction of the solution to the initial value problem~\eqref{main_ivp} on the domain $\Omega$. However, since no initial conditions are specified, the solution is generally nonunique.
Among all possible solutions to \eqref{2.1111}, we choose to compute the one with the smallest norm in a suitable Hilbert space. In fact, we use the Hilbert space
$L^2_{e^{-2t}}((0,T); H^{2+p}(\Omega)^d)$ which is $L^2((0,T); H^{2+p}(\Omega)^d)$ with the inner product
$$ \langle\mathbf{v},\mathbf{w}\rangle_{L^2_{e^{-2t}}((0,T); H^{2+p}(\Omega)^d)}=\int_0^Te^{-2t}
\langle \mathbf{v}(t),\mathbf{w}(t)\rangle_{H^{2+p}(\Omega^d)}{\rm d}t.  $$
This minimum norm solution, motivated by the Moore-Penrose pseudoinverse framework, is unique and provides a stable and well-defined approximation. The use of minimal norm solutions is widely accepted in the inverse problems community, particularly when dealing with ill-posed or underdetermined systems.

\begin{Theorem}[Existence and Uniqueness of a Minimal-Norm Solution]
Let $p \geq 0$ and define the set of admissible solutions
\[
S = \left\{ \mathbf{u} \in L^2\left(0,T; (H^{2 + p}(\Omega))^d\right) \cap H^2\left(0,T; (H^p(\Omega))^d\right) \,\middle|\, \mathbf{u} \text{ satisfies } \eqref{2.1111} \right\}.
\]
Assume that the system \eqref{2.1111} admits at least one solution in $S$. Then, there exists a unique function $\mathbf{u}^* \in S$ such that
\[
\mathbf{u}^* = \operatorname*{argmin}_{\mathbf{u} \in S} \left\| \mathbf{u} \right\|_{L^2_{e^{-2t}}\left(0,T; (H^{2 + p}(\Omega))^d\right)}.
\]
\label{min_solution}
\end{Theorem}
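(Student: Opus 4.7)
The plan is to recast the minimization as a projection onto an affine subspace of a Hilbert space, then verify the two hypotheses that make the projection theorem applicable: $S$ is nonempty (given) and $S$ is closed and convex. First I would observe that if $\mathbf{u}_1,\mathbf{u}_2\in S$ then $\mathbf{w}=\mathbf{u}_1-\mathbf{u}_2$ satisfies the linear system \eqref{2.1111} with data $\mathbf{f}^*=\mathbf{g}^*=0$; hence $S=\mathbf{u}_0+S_0$ for any fixed $\mathbf{u}_0\in S$, where $S_0$ is the linear subspace of homogeneous admissible fields. Minimizing the ambient norm over the affine set $S$ is equivalent to finding the point of $S_0$ closest to $-\mathbf{u}_0$, and the classical Hilbert projection theorem gives both existence and uniqueness once $S_0$ is shown to be closed in $\mathcal{H}:=L^2_{e^{-2t}}\bigl((0,T);(H^{2+p}(\Omega))^d\bigr)$.

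The core step is therefore closedness of $S$ in $\mathcal{H}$. Since $e^{-2t}$ is pointwise bounded between $e^{-2T}$ and $1$ on $(0,T)$, the weighted and unweighted norms on $L^2((0,T);(H^{2+p}(\Omega))^d)$ are equivalent, so I may check closedness in the unweighted topology. Let $\mathbf{u}_k\in S$ with $\mathbf{u}_k\to\mathbf{u}$ in $\mathcal H$. Continuity of the Dirichlet and Neumann trace operators from $L^2((0,T);(H^{2+p}(\Omega))^d)$ into the corresponding trace spaces on $\Gamma_T$ gives $\mathbf{u}|_{\Gamma_T}=\mathbf{f}^*$ and $\partial_\nu\mathbf{u}|_{\Gamma_T}=\mathbf{g}^*$. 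For the PDE itself, the $C^2(\overline\Omega)$ regularity assumed on the entries $C_{ijkl}$ makes the operator $\mathbf{v}\mapsto\Div(\mathbb{C}:\nabla\mathbf{v})$ bounded from $(H^{2+p}(\Omega))^d$ into $(H^p(\Omega))^d$; applying it termwise gives $\Div(\mathbb{C}:\nabla\mathbf{u}_k)\to\Div(\mathbb{C}:\nabla\mathbf{u})$ in $L^2((0,T);(H^p(\Omega))^d)$, while $\partial_{tt}\mathbf{u}_k\to\partial_{tt}\mathbf{u}$ in $\mathcal{D}'(\Omega_T)$ by continuity of distributional differentiation. Passing to the limit identifies $\mathbf{u}_{tt}=\Div(\mathbb{C}:\nabla\mathbf{u})$ in the distributional sense, and the previous boundedness also shows $\mathbf{u}_{tt}\in L^2((0,T);(H^p(\Omega))^d)$, so the regularity $\mathbf{u}\in H^2((0,T);(H^p(\Omega))^d)$ required in the definition of $S$ is automatic. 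Thus $\mathbf{u}\in S$ and $S$ is closed.

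With $S$ nonempty, convex, and closed in the Hilbert space $\mathcal H$, the projection theorem yields a unique $\mathbf{u}^*\in S$ minimizing $\|\cdot\|_{\mathcal H}$. Alternatively, one can run the direct method: take a minimizing sequence, use its boundedness in $\mathcal H$ to extract a weakly convergent subsequence, invoke Mazur's theorem to conclude that the weak limit still lies in $S$ (convex strongly closed sets are weakly closed), and conclude by weak lower semicontinuity of the norm; uniqueness then follows from strict convexity of $\mathbf{u}\mapsto\|\mathbf{u}\|^2_{\mathcal H}$, which rules out two distinct minimizers by the parallelogram identity applied to their midpoint.

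The main obstacle, and the only place where a non-trivial argument is needed, is the closedness step, because the definition of $S$ mixes two regularity scales, $L^2((0,T);(H^{2+p})^d)$ and $H^2((0,T);(H^p)^d)$, while the minimization norm only controls the first. The resolution, as outlined above, is that the PDE itself propagates spatial regularity into temporal regularity through the continuity of the elastic operator, which in turn relies essentially on the $C^2$-smoothness of $\mathbb{C}$ assumed at the outset; everything else reduces to standard projection-theoretic bookkeeping.
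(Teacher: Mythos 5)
Your proof is correct and follows essentially the same route as the paper: both exploit the linearity of \eqref{2.1111} together with the boundedness of $\mathbf{v}\mapsto\Div(\mathbb{C}:\nabla\mathbf{v})$ from $(H^{2+p}(\Omega))^d$ to $(H^{p}(\Omega))^d$ to control $\mathbf{u}_{tt}$ by the minimization norm, and both derive uniqueness from the parallelogram identity. The only difference is packaging: the paper runs the direct method (minimizing sequence, weak compactness, weak lower semicontinuity) and asserts without detail that the weak limit lies in $S$, whereas you invoke the Hilbert projection theorem onto a closed affine subspace and spell out the closedness of $S$ explicitly (your passing remark that $\mathbf{u},\mathbf{u}_{tt}\in L^2$ in time forces $\mathbf{u}_t\in L^2$ is the standard intermediate-derivative fact, quantified in the paper's Lemma \ref{lem4.2}), which fills in precisely the step the paper leaves implicit.
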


\begin{proof}
Define
\[
\boldsymbol{\alpha} = \inf\left\{ \|\mathbf{u}\|_{L^2_{e^{-2t}}(0,T; (H^{2 + p}(\Omega))^d)} : \mathbf{u} \in S \right\}.
\]
Let $\{\mathbf{u}^n\} \subset S$ be a minimizing sequence such that
\begin{equation}
\boldsymbol{\alpha} \leq \|\mathbf{u}^n\|_{L^2_{e^{-2t}}(0,T; (H^{2 + p}(\Omega))^d)} \leq \boldsymbol{\alpha} + \frac{1}{n}.
\label{to-min}
\end{equation}
Since $\mathbf{u}^n$ satisfies the wave equation $\mathbf{u}^n_{tt} = \operatorname{div}(\mathbb{C} : \nabla \mathbf{u}^n)$, and $\mathbb{C}$ is smooth, the second time derivatives $\mathbf{u}^n_{tt}$ are uniformly bounded in $L^2_{e^{-2t}}(0,T; (H^p(\Omega))^d)$. Therefore, there exists a subsequence (still denoted $\mathbf{u}^n$) and a function $\mathbf{u} \in S$ such that
\[
\mathbf{u}^n \rightharpoonup \mathbf{u} \text{ weakly in } L^2_{e^{-2t}}(0,T; (H^{2 + p}(\Omega))^d), \quad 
\mathbf{u}^n_{tt} \rightharpoonup \mathbf{u}_{tt} \text{ weakly in } L^2_{e^{-2t}}(0,T; (H^p(\Omega))^d).
\]
By weak lower semicontinuity and \eqref{to-min}, it follows that
\[
\|\mathbf{u}\|_{L^2_{e^{-2t}}(0,T; (H^{2 + p}(\Omega))^d)} = \boldsymbol{\alpha}.
\]
To show uniqueness, suppose there exists another $\mathbf{v} \in S$ such that 
\[
\|\mathbf{v}\|_{L^2_{e^{-2t}}(0,T; (H^{2 + p}(\Omega))^d)} = \boldsymbol{\alpha}.
\]
Then the average $\tfrac{1}{2}(\mathbf{u} + \mathbf{v})$ also belongs to $S$, and satisfies
\[
\left\| \tfrac{1}{2}(\mathbf{u} + \mathbf{v}) \right\|_{L^2_{e^{-2t}}(0,T; (H^{2 + p}(\Omega))^d)} \geq \boldsymbol{\alpha}.
\]
Applying the parallelogram identity gives
\begin{align*}
\|\mathbf{u} - \mathbf{v}\|^2_{L^2_{e^{-2t}}(0,T; (H^{2 + p}(\Omega))^d)} 
&= 2 \|\mathbf{u}\|^2_{L^2_{e^{-2t}}(0,T; (H^{2 + p}(\Omega))^d)} + 
2 \|\mathbf{v}\|_{L^2_{e^{-2t}}(0,T; (H^{2 + p}(\Omega))^d)}^2\\
& - 4 \left\| \tfrac{1}{2}(\mathbf{u} + \mathbf{v}) \right\|^2_{L^2_{e^{-2t}}(0,T; (H^{2 + p}(\Omega))^d)} \\
&= 4 \boldsymbol{\alpha}^2 - 4 \left\| \tfrac{1}{2}(\mathbf{u} + \mathbf{v}) \right\|^2_{L^2_{e^{-2t}}(0,T; (H^{2 + p}(\Omega))^d)} \leq 0.
\end{align*}
Hence, $\mathbf{u} = \mathbf{v}$, and the minimal-norm solution is unique.
\end{proof}

We now develop a numerical strategy to approximate the minimal-norm solution of the boundary value problem~\eqref{2.1111}. To this end, we employ a spectral representation in time using the new Legendre polynomial-exponential basis $\{\Psi_n\}_{n \geq 0}$ of the weighted space $L^2_{e^{-2t}}(0, T)$. For each spatial point $\mathbf{x} \in \Omega$, the displacement field $\mathbf{u}(\mathbf{x}, t)$ is approximated by its projection on to $\mathbb{V}_N$ in the time variable:
\begin{equation}
    \mathbf{u}(\mathbf{x}, t) = \sum_{n = 0}^{\infty} \mathbf{u}_n(\mathbf{x}) \Psi_n(t)
    \approx P^N \bu(\x, t) =  \sum_{n = 0}^{N} \mathbf{u}_n(\mathbf{x}) \Psi_n(t), \quad (\mathbf{x}, t) \in \Omega_T,
    \label{u_appr}
\end{equation}
where the coefficients $\mathbf{u}_n$ are given by the $n-th$ Fourier mode of $\bu$, see \eqref{Fourier_mode}
\begin{equation}
    \mathbf{u}_n(\mathbf{x}) = \langle \mathbf{u}(\mathbf{x}, \cdot), \Psi_n \rangle_{L^2_{e^{-2t}}(0, T)} = \int_0^T e^{-2t} \mathbf{u}(\mathbf{x}, t) \Psi_n(t) \, {\rm d}t.
    \label{u_coef}
\end{equation}
The truncation parameter $N$ will be chosen based on the desired accuracy and computational constraints.
Due to Theorem \ref{thm2.1},
\begin{equation}
    \partial_{tt} \mathbf{u}(\x, t) \approx \sum_{n = 0}^{N} \mathbf{u}_n(\mathbf{x}) \Psi_n''(t), \quad (\mathbf{x}, t) \in \Omega_T.
    \label{utt_appr}
\end{equation}
Substituting the approximations in ~\eqref{u_appr} and \eqref{utt_appr} into the elastic wave equation in~\eqref{2.1111}, we obtain:
\begin{equation}
    \sum_{n = 0}^{N} \mathbf{u}_n(\mathbf{x}) \Psi_n''(t)
    = \sum_{n = 0}^{N} \Div \left( \mathbb{C} : \nabla \mathbf{u}_n(\mathbf{x}) \Psi_n(t) \right),
    \quad (\mathbf{x}, t) \in \Omega_T.
    \label{eqn_appro}
\end{equation}
Multiplying both sides of~\eqref{eqn_appro} by $e^{-2t} \Psi_m(t)$ and integrating over $(0,T)$ yields the following elliptic system for each $m \in \{0, \dots, N\}$:
\begin{equation}
    \Div(\mathbb{C} : \nabla \mathbf{u}_m(\mathbf{x})) 
    - \sum_{n = 0}^N s_{mn} \mathbf{u}_n(\mathbf{x}) = 0,
    \label{time-reduction-model}
\end{equation}
where the matrix coefficients $s_{mn}$ are defined as
\[
    s_{mn} = \int_0^T e^{-2t} \Psi_n''(t) \Psi_m(t) \, {\rm d}t
    \quad 0 \leq m, n \leq N.
\]

The boundary conditions for the spatial coefficients $\mathbf{u}_m(\mathbf{x})$ are derived from the boundary data in \eqref{data_ip} and the Fourier coefficient formula \eqref{u_coef}, leading to:
\begin{align}
\mathbf{u}_m(\mathbf{x}) &= \int_0^T \mathbf{u}(\mathbf{x}, t) \Psi_m(t)e^{-2t} \, \,  {\rm d}t 
= \int_0^T \mathbf{f}(\mathbf{x}, t) \Psi_m(t) e^{-2t}\, \,  {\rm d}t 
=: \mathbf{f}_m(\mathbf{x}), \label{3.7} \\
\partial_\nu \mathbf{u}_m(\mathbf{x}) &= \int_0^T \partial_\nu \mathbf{u}(\mathbf{x}, t) \Psi_m(t)e^{-2t} \, \,  {\rm d}t 
= \int_0^T \mathbf{g}(\mathbf{x}, t) \Psi_m(t) e^{-2t}\, \,  {\rm d}t 
=: \mathbf{g}_m(\mathbf{x}), \label{3.8}
\end{align}
for all $\mathbf{x} \in \partial \Omega$ and $m = 0, \dots, N$.

Combining equations \eqref{time-reduction-model}, \eqref{3.7}, and \eqref{3.8}, we arrive at the time dimension reduction model for the problem we want to solve \eqref{2.1111}, namely
\begin{equation}
\begin{cases}
\ds\Div \left( \mathbb{C} : \nabla \mathbf{u}_m(\mathbf{x}) \right) 
- \sum_{n = 0}^{N} s_{mn} \mathbf{u}_n(\mathbf{x}) = 0, & \mathbf{x} \in \Omega, \\
\mathbf{u}_m(\mathbf{x}) = \mathbf{f}_m(\mathbf{x}), & \mathbf{x} \in \partial \Omega, \\
\partial_\nu \mathbf{u}_m(\mathbf{x}) = \mathbf{g}_m(\mathbf{x}), & \mathbf{x} \in \partial \Omega,
\end{cases}
\label{3.9}
\end{equation}
for $m = 0, 1, \dots, N$.

\begin{remark}
The system~\eqref{3.9} forms the analytical backbone of our time-reduction approach and serves as the foundation of the proposed computational framework. A central theoretical difficulty in deriving this system lies in justifying the approximation 
\[
\partial_t^2 \mathbf{u}(\mathbf{x}, t) \approx \sum_{n = 0}^N \mathbf{u}_n(\mathbf{x}) \Psi_n''(t).
\]
Establishing the validity of this representation is one of the principal analytical contributions of this work. The key insight involves the use of the Legendre polynomial--exponential basis, whose spectral properties and regularity enable effective control over the behavior of the series. 

This is formalized in Theorem~\ref{thm2.1}, which rigorously proves the convergence of the series 
\[
\sum_{n=0}^{\infty} \mathbf{u}_n(\mathbf{x}) \Psi_n''(t)
\]
in a suitable Sobolev space, and identifies the limit as the second time derivative $\partial_t^2 \mathbf{u}(\mathbf{x}, t)$. Without this spectral framework, such a convergence result would be difficult to obtain, as experienced in our earlier investigations. Theorem~\ref{thm2.1} therefore provides a crucial theoretical foundation for the entire time-dimensional reduction methodology.
\end{remark}

\begin{remark}
Recall that Proposition~\ref{prop2.1} guarantees that for every $n \geq 0$, the basis function $\Psi_n(t) = e^t Q_n(t)$ is infinitely differentiable on $(0, T)$, and none of its derivatives, including the second derivative $\Psi_n''(t)$, are identically zero. This nonvanishing property is essential: if $\Psi_n''(t)$ were identically zero for some $n$, the corresponding term $\mathbf{u}_n(\mathbf{x}) \Psi_n''(t)$ would vanish, and thus the influence of $\mathbf{u}_n$ would be completely omitted in the approximation of $\partial_{tt} \mathbf{u}(\mathbf{x}, t)$ in~\eqref{utt_appr}.

Such an omission would lead to an artificial loss of information and introduce unnecessary error into the numerical approximation. Therefore, the structural properties ensured by Proposition~\ref{prop2.1} are indispensable for the consistency and accuracy of the time reduction method.

This observation also highlights why more conventional bases, such as the classical Legendre polynomials or the trigonometric functions used in standard Fourier expansions, are not suitable in this context. These traditional bases may include functions whose second derivatives vanish identically. As a result, they cannot guarantee that each Fourier mode contributes meaningfully to approximations. The exponential-modulated Legendre basis $\{\Psi_n\}_{n \geq 0}$ avoids this issue, making it particularly well-suited for time-differential problems such as the one considered here.
\end{remark}

The core idea of the time dimension reduction method is to compute a finite collection of spatial modes
\begin{equation*}
U^N = \begin{bmatrix}\bu_0 & \mathbf{u}_1 & \dots & \mathbf{u}_N\end{bmatrix}^\top \in \left[H^{2+p}(\Omega)^d\right]^{N + 1},
\end{equation*}
which solves the reduced elliptic system~\eqref{3.9}. These spatial coefficients are then used to reconstruct a time-dependent approximation of the displacement field via the truncated expansion~\eqref{u_appr}. The resulting projection provides an estimate of the full space-time solution, from which the initial displacement and velocity can be inferred.

In the next section, we present the quasi-reversibility method to compute the solution to \eqref{3.9}.

\section{The convergence of the quasi-reversibility method} 
\label{sec_convergence}

Due to the approximation inherent in the representation \eqref{u_appr} and the presence of noise in the boundary data $ \mathbf{f} $ and $ \mathbf{g} $, the reduced system \eqref{2.9} may not admit an exact solution. To explicitly indicate the presence of noise, we denote the measured (perturbed) data by $ \mathbf{f}^{\delta} $ and $ \mathbf{g}^{\delta} $, corresponding to the ideal (noiseless) data $ \mathbf{f}^* $ and $ \mathbf{g}^* $, respectively. The term \emph{noise} refers to the deviation between these measured and true boundary values, and is quantified by the inequality
\begin{equation*}
\int_0^T \int_{\partial \Omega} \left( |\mathbf{f}^{\delta} - \mathbf{f}^*|^2 + |\mathbf{g}^{\delta} - \mathbf{g}^*|^2 \right) \,  {\rm d}\sigma(\mathbf{x}) \,  {\rm d}t \leq \delta^2,
\end{equation*}
where $ \delta > 0 $ denotes the prescribed noise level.
To address the potential nonexistence of a solution in the presence of such noise, we adopt a regularization strategy by formulating a Tikhonov-type variational problem. This regularized approach is commonly referred to as the \emph{quasi-reversibility method}, originally introduced in \cite{LattesLions:e1969}, and is particularly well-suited for stabilizing ill-posed inverse problems.
For $m \in \{1, \dots, N\},$ define the noisy boundary data of \eqref{3.9} as
\[
    {\bf f}_m^\delta(\x) = \int_0^T e^{-2t} {\bf f}^\delta(\x, t) \Psi_m(t)  {\rm d}t, 
    \quad
    {\bf g}_m^\delta(\x) = \int_0^T e^{-2t} {\bf g}^\delta(\x, t) \Psi_m(t)  {\rm d}t, 
\]

More precisely, let $p \geq 0$ be the number in Theorem~\ref{min_solution}, and define the functional \[J_{N, \eta, \delta} : [H^{2 + p}(\Omega)^d]^{N + 1} \to \mathbb{R}\] by
\begin{multline}
	 J_{N, \eta, \delta}(W) = \sum_{m=0}^N\Big\|\Div\left(
    \mathbb{C}:\nabla \mathbf{w}_m(\mathbf{x}) \right) - \sum_{n=1}^{N} s_{mn} \mathbf{w}_{n}(\mathbf{x})\Big\|^2_{[H^p(\Omega)^d]^{N + 1}}
    \\
    + \sum_{m=0}^N\int_{\partial\Omega} \left( |\mathbf{w}_m - \mathbf{f}_m^\delta|^2 + |\partial_\nu \mathbf{w}_m - \mathbf{g}_m^\delta|^2 \right) \,  {\rm d}\sigma(\mathbf{x}) 
    + \eta\sum_{m=0}^N \|\mathbf{w}_m\|^2_{H^{2 + p}(\Omega)^d},
     \label{J_functional}
\end{multline}
where $W = \begin{bmatrix} {\bf w}_0 & \mathbf{w}_1 & \dots& \mathbf{w}_N \end{bmatrix}^\top \in [H^{2 + p}(\Omega)^d]^{N + 1}$ and $\eta$ is the regularization parameter.

The functional $J_{N, \eta, \delta}$ defined in~\eqref{J_functional} consists of three key components: the first enforces the elliptic system derived from the time-reduction model~\eqref{3.9}; the second ensures consistency with the (potentially noisy) boundary measurements; and the third serves as a regularization term to stabilize the minimization process and promote selection of a minimal-norm solution. The regularization is measured in the Sobolev norm corresponding to the solution space specified in Theorem~\ref{min_solution}.

The following lemma reformulates the functional $J_{N, \eta, \delta}$ in space-time form. This representation facilitates the analysis and plays a key role in establishing the convergence theorem.

\begin{Lemma}
Let $W = \begin{bmatrix} \mathbf{w}_0 & \mathbf{w}_1 & \dots & \mathbf{w}_N \end{bmatrix}^\top \in \left[(H^{2+p}(\Omega)^d)\right]^{N+1}$, and let $\mathbf{w} = \mathbb{S}^N[W]$ denote its space-time expansion of $W$, see Definition \ref{def2.2} and \eqref{space_time_expansion}. Then the functional $J_{N, \eta, \delta}(W)$ admits the following integral form:
\begin{multline}
J_{N, \eta, \delta}(W) = 
\int_0^T e^{-2t}\left\| \Div(\mathbb{C} : \nabla \mathbf{w}(\mathbf{x}, t)) - \partial_{tt} \mathbf{w}(\mathbf{x}, t) \right\|^2_{H^p(\Omega)^d}  {\rm d}t 
\\
+ \int_{\Gamma_T}  e^{-2t}\left| \mathbf{w}(\mathbf{x}, t) - P^N \mathbf{f}^\delta(\mathbf{x}, t) \right|^2 \,  {\rm d}\sigma(\mathbf{x})  {\rm d}t
+ \int_{\Gamma_T}  e^{-2t}\left| \partial_\nu \mathbf{w}(\mathbf{x}, t) - P^N \mathbf{g}^\delta(\mathbf{x}, t) \right|^2 \,  {\rm d}\sigma(\mathbf{x})  {\rm d}t 
\\
+ \eta \left\| \mathbf{w} \right\|^2_{L^2_{ e^{-2t}}((0, T); H^{2+p}(\Omega)^d)},
\label{J_in_t}
\end{multline}
where $P^N$, as defined in \eqref{PN}, denotes the orthogonal projection onto the finite-dimensional subspace $\mathbb{V}_N \subset L^2_{ e^{-2t}}(0, T)$, defined in~\eqref{VN}.
\label{lem4.1}
\end{Lemma}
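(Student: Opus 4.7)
The plan is to derive \eqref{J_in_t} by matching the three groups of terms in \eqref{J_functional} with the corresponding space-time integrals, driven entirely by the orthonormality of $\{\Psi_n\}$ in $L^2_{e^{-2t}}(0,T)$. Before doing anything else, I would isolate the single non-obvious structural fact that makes the whole identity work: the closure property $\Psi_n'' \in \mathbb{V}_N$ for every $n \leq N$. This follows from the direct computation $\Psi_n''(t) = e^t\bigl(Q_n''(t) + 2Q_n'(t) + Q_n(t)\bigr)$, in which the bracketed polynomial has degree at most $n$, hence is a linear combination of $Q_0,\dots,Q_n$, so $\Psi_n'' \in \operatorname{span}(\Psi_0,\dots,\Psi_n) \subset \mathbb{V}_N$. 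Consequently, both $\partial_{tt}\mathbf{w}$ and $\Div(\mathbb{C}:\nabla\mathbf{w})$ have time profiles in $\mathbb{V}_N$, and therefore the pointwise PDE residual $F(\mathbf{x},t):=\Div(\mathbb{C}:\nabla\mathbf{w}(\mathbf{x},t))-\partial_{tt}\mathbf{w}(\mathbf{x},t)$ satisfies $F(\mathbf{x},\cdot)\in\mathbb{V}_N$.

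With this in hand, I would compute the Fourier coefficient of $F(\mathbf{x},\cdot)$ against $\Psi_m$. By linearity, the definition $s_{mn} = \int_0^T e^{-2t}\Psi_n''(t)\Psi_m(t)\,{\rm d}t$, and $\langle\Psi_n,\Psi_m\rangle_{L^2_{e^{-2t}}(0,T)}=\delta_{nm}$, this Fourier coefficient equals $\Div(\mathbb{C}:\nabla\mathbf{w}_m(\mathbf{x})) - \sum_{n=0}^N s_{mn}\mathbf{w}_n(\mathbf{x})$. Applying Parseval's identity, which is valid precisely because $F(\mathbf{x},\cdot)$ lies in the finite-dimensional $\mathbb{V}_N$, then integrating the squared $H^p(\Omega)^d$ norm over $\Omega$, reproduces the first term of \eqref{J_functional}. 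The boundary terms follow the same template: since $\mathbf{w}|_{\partial\Omega}(\mathbf{x},t) = \sum_{n=0}^N \mathbf{w}_n(\mathbf{x})\Psi_n(t)$ and $P^N\mathbf{f}^\delta(\mathbf{x},t) = \sum_{n=0}^N \mathbf{f}_n^\delta(\mathbf{x})\Psi_n(t)$ both lie in $\mathbb{V}_N$, orthonormality collapses $\int_0^T e^{-2t}|\mathbf{w}-P^N\mathbf{f}^\delta|^2\,{\rm d}t$ to $\sum_{m=0}^N |\mathbf{w}_m(\mathbf{x})-\mathbf{f}_m^\delta(\mathbf{x})|^2$, and integrating over $\partial\Omega$ closes that term; the $\partial_\nu$ contribution is identical. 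The regularization term is even more direct: expanding $\|\mathbf{w}(\cdot,t)\|^2_{H^{2+p}}$ as a double sum in $(i,j)$ and integrating against $e^{-2t}$ collapses it via orthonormality to $\sum_{m=0}^N \|\mathbf{w}_m\|^2_{H^{2+p}(\Omega)^d}$.

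The main obstacle, and the only step that is not pure bookkeeping, is the closure observation $\Psi_n'' \in \mathbb{V}_N$: without it, $F(\mathbf{x},\cdot)$ would carry components in $\mathbb{V}_N^\perp$, Parseval's identity would only provide an inequality, and the discrete sum in \eqref{J_functional} would strictly underestimate the space-time integral $\int_0^T e^{-2t}\|F\|_{H^p(\Omega)^d}^2\,{\rm d}t$. Once this closure is in place, the remainder of the proof is a direct consequence of the orthonormality of $\{\Psi_n\}$ in $L^2_{e^{-2t}}(0,T)$ and the definition of the projection $P^N$.
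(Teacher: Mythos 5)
Your proposal is correct and follows essentially the same route as the paper's proof: each group of terms in \eqref{J_functional} is matched to its space-time counterpart using the orthonormality of $\{\Psi_n\}$ in $L^2_{e^{-2t}}(0,T)$ and Parseval's identity in the finite-dimensional subspace $\mathbb{V}_N$. You are in fact slightly more careful than the paper's own terse argument, since you explicitly verify the closure property $\Psi_n'' \in \mathbb{V}_N$ for $n \leq N$ (via $\Psi_n'' = e^t(Q_n'' + 2Q_n' + Q_n)$ with the bracket a polynomial of degree at most $n$), which is precisely what makes the Parseval identity for the PDE-residual term an equality rather than a mere inequality; the paper leaves this point implicit.
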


\begin{proof}
    Since $\{\Psi_n\}_{n = 0}^N$ is an orthonormal basis of the finite dimensional space $\mathbb{V}^N$ of $L^2(0, T)$, we have
    \begin{multline}
    \int_0^T e^{-2t}\left\|  \Div(\mathbb{C} : \nabla \mathbf{w}(\mathbf{x}, t)) - \partial_{tt} \mathbf{w}(\mathbf{x}, t) \right\|_{H^p(\Omega)^d}^2   {\rm d}t 
    \\
    = 
     \int_\Omega \sum_{m = 0}^N \big| \sum_{0 \leq |\alpha| \leq p}\langle D^{\alpha}[ \Div(\mathbb{C} : \nabla \mathbf{w}(\mathbf{x}, \cdot)) -  \partial_{tt} \mathbf{w}(\mathbf{x}, \cdot)], \Psi_m  \rangle_{L^2_{e^{-2t}}(0, T)}\big|^2  {\rm d}\x.
    \label{4.3}
    \end{multline}
   The right-hand side of \eqref{4.3} is exactly the first term in the definition of $J_{N, \eta, \delta}(W)$ in \eqref{J_functional}.
    Similarly, we can use the same argument to match the other terms of \eqref{J_in_t} and their corresponding terms on the right-hand side of \eqref{J_functional}. We therefore obtain \eqref{J_in_t}.
    \end{proof}

Let $U^{N, \eta, \delta}_{\min} = 
\begin{bmatrix}
    \bu_0^{N, \eta, \delta} & \bu_1^{N, \eta, \delta} &\dots & \bu_N^{N, \eta, \delta}
\end{bmatrix}^\top$ denote the minimizer of the functional $J_{N, \eta, \delta}$. Once this minimizer is computed, its space-time reconstruction $\mathbb{S}^N [U^{N, \eta, \delta}_{\min}]$ provides an approximate solution $\bu^*$ to the boundary value problem~\eqref{2.1111}, and thus $\mathbb{S}^N [U^{N, \eta, \delta}_{\min}](\x, 0)$ and $\partial_t \mathbb{S}^N [U^{N, \eta, \delta}_{\min}](\x, 0)$ yields a numerical approximation to the solution of the inverse problem posed in Problem~\ref{isp}.
This solving procedure naturally raises the following fundamental questions:
\begin{enumerate}
    \item Does the minimizer $U^{N, \eta, \delta}_{\min}$ exist and is it unique?
    \item Does the reconstructed approximation
    \begin{equation*}
        \mathbf{u}^{N, \delta, \eta}_{\min}(\x, t) := \mathbb{S}^N[U^{N, \delta, \eta}_{\min}](\x, t) = \sum_{n = 0}^N \mathbf{u}_n^{N, \delta, \eta}(\x) \Psi_n(t), \quad (\x, t) \in \Omega_T
    \end{equation*}
    converge to the true solution $\mathbf{u}$ of~\eqref{2.1111} as $N \to \infty$ and $\eta, \delta \to 0$?
\end{enumerate}

We address these questions in the following theorem.
\begin{Theorem}
Let \( p \geq 0 \). Assume that the system \eqref{2.1111} has 
the unique minimum solution $u^*\in L^2(0,T; H^{2+p}(\Omega)^d)\cap H^2(0,T; H^p(\Omega)^d)$
 as in Theorem~\ref{min_solution} and that the series
 $$\sum_{n=0}^\infty \langle \mathbf{u}^*(\mathbf{x},.),\Psi_n\rangle_{L^2_{e^{-2t}}(0,T)}\Psi_n''(t) $$ 
 converges in $L^2(0,T; H^p(\Omega)^d)$.
 Then the following statements hold:

\begin{enumerate}
\item (Existence and uniqueness of the minimizer)  
Assume that the boundary data sequences \( (\mathbf{f}_m^{\delta})_{m=0}^N \) and \( (\mathbf{g}_m^{\delta})_{m=0}^N \) belong to \( ([L^2(\partial \Omega)]^d)^{N + 1} \). Then the functional \( J_{N, \eta, \delta} \) admits a unique minimizer \( U^{N, \eta, \delta}_{\min} \in [(H^{2+p}(\Omega))^d]^{N + 1} \), satisfying
\[
U^{N, \eta, \delta}_{\min} = \operatorname*{argmin}_{W \in [(H^{2+p}(\Omega))^d]^{N + 1}} J_{N, \eta, \delta}(W).
\]

\item (Approximation of the second derivative)  
For any fixed noise level \( \delta > 0 \), there exists a threshold \( N(\delta) \in \mathbb{N} \) such that for all \( N \geq N(\delta) \), the projection \( P^N \mathbf{u}^* \) of the minimal norm solution \( \mathbf{u}^* \) satisfies
\begin{equation}
\left\| P^N\mathbf{u}^*_{tt} - (P^N \mathbf{u}^*)_{tt} \right\|_{L^2_{e^{-2t}}((0, T); H^p(\Omega)^d)}^2 \leq \delta^2.
\label{4.4}
\end{equation}

\item (Convergence of the regularized solution)  
Assume that the regularization parameter \( \eta = \eta(\delta) \) satisfies \( \eta(\delta) \to 0 \) and \( \delta^2 = o(\eta(\delta)) \) as \( \delta \to 0 \). Define the admissible parameter set
\[
\Theta := \left\{ (N, \eta, \delta) : N \geq N(\delta),\ \delta^2 = o(\eta) \right\}.
\]
Then,
\begin{multline}
    \lim_{\Theta \ni (N, \eta, \delta) \to (\infty, 0^+, 0^+)}  \Big[
    \big\|
        \mathbb{S}^N[U^{N, \eta, \delta}_{\min}] - \bu^*
    \big\|_{L^2_{e^{-2t}}((0, T); H^p(\Omega)^d)}
    \\
    +
    \big\|
        \partial_t \mathbb{S}^N[U^{N, \eta, \delta}_{\min}] - \bu^*_t
    \big\|_{L^2_{e^{-2t}}((0, T); H^p(\Omega)^d)}
    \Big] = 0.
    \label{main_convergence}
\end{multline}


\end{enumerate}
\label{main-theorem}
\end{Theorem}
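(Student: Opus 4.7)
The plan is to handle the three parts in succession. Part (i) follows from standard variational theory: $J_{N,\eta,\delta}$ is a continuous, strictly convex, coercive functional on the Hilbert space $[(H^{2+p}(\Omega))^d]^{N+1}$ (convexity and coercivity coming from the Tikhonov term $\eta\sum_m\|\mathbf{w}_m\|^2_{H^{2+p}}$), so the direct method yields a unique minimizer. Part (ii) uses the hypothesis on $\mathbf{u}^*$ together with Theorem~\ref{thm2.1}(a) to identify $\sum_n\mathbf{u}^*_n\Psi_n''$ with $\mathbf{u}^*_{tt}$ in $L^2((0,T);H^p(\Omega)^d)$; then both $(P^N\mathbf{u}^*)_{tt}=\sum_{n=0}^N\mathbf{u}^*_n\Psi_n''$ and $P^N\mathbf{u}^*_{tt}$ converge to $\mathbf{u}^*_{tt}$ in $L^2_{e^{-2t}}((0,T);H^p(\Omega)^d)$, so their difference can be made smaller than $\delta$ by choosing $N$ sufficiently large, which defines $N(\delta)$.

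For part (iii), the key step is to test the minimality of $U^{N,\eta,\delta}_{\min}$ against $V^{N,*}=(\mathbf{u}^*_0,\ldots,\mathbf{u}^*_N)^\top$, whose space-time reconstruction $\mathbb{S}^N[V^{N,*}]$ is $P^N\mathbf{u}^*$. Applying the integral form in Lemma~\ref{lem4.1}, the PDE residual of $V^{N,*}$ simplifies to $P^N\mathbf{u}^*_{tt}-(P^N\mathbf{u}^*)_{tt}$ (since $P^N$ commutes with the time-independent operator $\operatorname{div}(\mathbb{C}:\nabla\cdot)$ and $\mathbf{u}^*$ solves \eqref{2.1111}) and is controlled by part (ii); the boundary residuals reduce to $P^N(\mathbf{f}^*-\mathbf{f}^\delta)$ and $P^N(\mathbf{g}^*-\mathbf{g}^\delta)$, each bounded by the noise level $\delta$ via contractivity of $P^N$; and the Tikhonov term is at most $\eta\|\mathbf{u}^*\|^2_{L^2_{e^{-2t}}((0,T);H^{2+p}(\Omega)^d)}$ by the same contractivity. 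This gives $J_{N,\eta,\delta}(U^{N,\eta,\delta}_{\min})\leq J_{N,\eta,\delta}(V^{N,*})\leq C\delta^2+\eta\|\mathbf{u}^*\|^2$. Since $\delta^2=o(\eta)$, the regularization piece produces the uniform bound $\|\mathbf{u}^{N,\eta,\delta}_{\min}\|^2_{L^2_{e^{-2t}}((0,T);H^{2+p}(\Omega)^d)}\leq\|\mathbf{u}^*\|^2+o(1)$, while the PDE and boundary residuals tend to zero and, in particular, $(\mathbf{u}^{N,\eta,\delta}_{\min})_{tt}$ remains bounded in $L^2_{e^{-2t}}((0,T);H^p(\Omega)^d)$.

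I then extract a weakly convergent subsequence with limit $\mathbf{u}^\infty$; passing to the limit in the vanishing residuals (by weak lower semicontinuity of the Bochner norms, trace continuity, and $P^N\mathbf{f}^\delta\to\mathbf{f}^*$, $P^N\mathbf{g}^\delta\to\mathbf{g}^*$) shows that $\mathbf{u}^\infty$ solves \eqref{2.1111}. Weak lower semicontinuity gives $\|\mathbf{u}^\infty\|\leq\|\mathbf{u}^*\|$, which by the uniqueness of the minimal-norm solution (Theorem~\ref{min_solution}) forces $\mathbf{u}^\infty=\mathbf{u}^*$; as every weak cluster point is $\mathbf{u}^*$, the entire family converges weakly. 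The equality $\|\mathbf{u}^{N,\eta,\delta}_{\min}\|\to\|\mathbf{u}^*\|$ in a Hilbert space upgrades weak to strong convergence in $L^2_{e^{-2t}}((0,T);H^{2+p}(\Omega)^d)$, hence a fortiori in $L^2_{e^{-2t}}((0,T);H^p(\Omega)^d)$, yielding the first half of \eqref{main_convergence}. For the time-derivative statement, strong convergence of $\mathbf{u}^{N,\eta,\delta}_{\min}$ in the $H^{2+p}$-Bochner norm produces strong convergence of $\operatorname{div}(\mathbb{C}:\nabla\mathbf{u}^{N,\eta,\delta}_{\min})$, and combined with the vanishing PDE residual gives strong convergence $(\mathbf{u}^{N,\eta,\delta}_{\min})_{tt}\to\mathbf{u}^*_{tt}$ in $L^2_{e^{-2t}}((0,T);H^p(\Omega)^d)$; an Aubin--Lions-type interpolation between $L^2((0,T);H^{2+p})$ and $H^2((0,T);H^p)$ then yields the required strong convergence of $\partial_t\mathbb{S}^N[U^{N,\eta,\delta}_{\min}]$. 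The main obstacle I anticipate is this last compactness step: adapting the Aubin--Lions machinery to the weighted Bochner framework and correctly leveraging simultaneous bounds on $\mathbf{u}^{N,\eta,\delta}_{\min}$ and its second time derivative to extract strong convergence of the first time derivative, without which the second term in \eqref{main_convergence} cannot be controlled.
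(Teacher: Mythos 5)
Your proposal is correct, and parts (i), (ii), and the skeleton of part (iii) — testing $J_{N,\eta,\delta}$ against the truncated Fourier modes of $\mathbf{u}^*$, deriving the energy bound $J_{N,\eta,\delta}(U^{N,\eta,\delta}_{\min})\le C\delta^2+\eta\|\mathbf{u}^*\|^2$, extracting a limit, and identifying it as $\mathbf{u}^*$ via the minimal-norm property of Theorem~\ref{min_solution} — coincide with the paper's argument. Where you genuinely diverge is the final strong-convergence step. The paper bounds $\partial_t\mathbb{S}^N[U^{N,\eta,\delta}_{\min}]$ a priori using its Lemma~\ref{lem4.2}, observes that the family is bounded in $L^2((0,T);H^{2+p}(\Omega)^d)\cap H^2((0,T);H^p(\Omega)^d)$, and then invokes a compactness (Aubin--Lions/Rellich type) argument to pass to a subsequence converging strongly, together with weak convergence of the traces, before identifying the limit. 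You instead first identify the weak limit, then upgrade to strong convergence in $L^2_{e^{-2t}}((0,T);H^{2+p}(\Omega)^d)$ via the ``weak convergence plus norm convergence'' trick (using that $\limsup\|\mathbf{u}^{N,\eta,\delta}_{\min}\|\le\|\mathbf{u}^*\|\le\liminf\|\mathbf{u}^{N,\eta,\delta}_{\min}\|$), bootstrap through the PDE to get strong convergence of $\partial_{tt}\mathbb{S}^N[U^{N,\eta,\delta}_{\min}]$, and finally interpolate to control $\partial_t$. This route is valid and arguably more self-contained: it avoids the compact-embedding step entirely and delivers strong convergence in the stronger $H^{2+p}$ spatial norm. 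The ``main obstacle'' you anticipate is in fact already resolved within your own outline: the interpolation you need is exactly the inequality $\|v'\|^2_{L^2_{e^{-2t}}}\le C\bigl(\|v\|^2_{L^2_{e^{-2t}}}+\|v''\|^2_{L^2_{e^{-2t}}}\bigr)$ of Lemma~\ref{lem4.2}, applied to $v=\mathbb{S}^N[U^{N,\eta,\delta}_{\min}]-\mathbf{u}^*$, and no adaptation of Aubin--Lions to the weighted framework is required since $e^{-2t}$ is bounded above and below on $[0,T]$, so the weighted and unweighted Bochner norms are equivalent.
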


Before proving Theorem \ref{main-theorem}, we prove the following Lemma.
\begin{Lemma}
Let $T > 0$. There exists a constant $C > 0$ such that
\[
\|v'\|^2_{L^2_{e^{-2t}}(0, T)} \leq C \left( \|v\|^2_{L^2_{e^{-2t}}(0, T)} + \|v''\|^2_{L^2_{e^{-2t}}(0, T)} \right)
\]
for all $v \in H^2(0, T)$.
\label{lem4.2}
\end{Lemma}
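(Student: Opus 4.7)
The plan is to reduce the weighted estimate to its unweighted counterpart. On the bounded interval $[0,T]$ the exponential weight $e^{-2t}$ is trapped between $e^{-2T}$ and $1$, so the norm $\|\cdot\|_{L^2_{e^{-2t}}(0,T)}$ is equivalent to the ordinary $L^2(0,T)$ norm, with equivalence constants depending only on $T$. It therefore suffices to prove the standard Sobolev interpolation inequality $\|v'\|_{L^2(0,T)}^2 \leq C(\|v\|_{L^2(0,T)}^2 + \|v''\|_{L^2(0,T)}^2)$ for every $v \in H^2(0,T)$, and then dress up the estimate with the two-sided weight bound at the end.

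For the unweighted inequality, I would begin with the integration-by-parts identity
\begin{equation*}
\int_0^T (v'(t))^2\, {\rm d}t = v'(T)v(T) - v'(0)v(0) - \int_0^T v(t)\, v''(t)\, {\rm d}t.
\end{equation*}
The interior integral is immediately controlled by $\tfrac{1}{2}\|v\|_{L^2}^2 + \tfrac{1}{2}\|v''\|_{L^2}^2$ via Cauchy--Schwarz and Young's inequality. The boundary traces are controlled using the continuous embedding $H^1(0,T) \hookrightarrow C([0,T])$ applied to $v$ and then to $v'$, each time combined with Young's inequality carrying a small tunable parameter $\epsilon > 0$.

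A cleaner alternative, which I would actually prefer to write up, is to invoke Ehrling's lemma applied to the compact embedding chain $H^2(0,T) \subset\subset H^1(0,T) \subset L^2(0,T)$: for every $\epsilon > 0$ there exists $C_\epsilon > 0$ such that $\|v\|_{H^1(0,T)} \leq \epsilon \|v\|_{H^2(0,T)} + C_\epsilon \|v\|_{L^2(0,T)}$. Squaring, using $(a+b)^2 \leq 2a^2 + 2b^2$, and absorbing the resulting $2\epsilon^2 \|v'\|_{L^2}^2$ into the left-hand side by choosing $\epsilon$ with $2\epsilon^2 < 1/2$ yields at once the desired unweighted inequality with a constant depending only on $T$.

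The main technical point, whichever route is taken, is that the natural estimates for the pointwise boundary values $v(0)$, $v(T)$, $v'(0)$, $v'(T)$ in the direct integration-by-parts approach always re-introduce a $\|v'\|_{L^2}^2$ term on the right-hand side. This is precisely why a Young-inequality absorption with a sufficiently small parameter is required; the absorption is legitimate because the offending coefficient can be made strictly less than $1$ independently of $v$. The Ehrling-lemma route encapsulates this absorption step automatically, which is why I would take it in the final write-up.
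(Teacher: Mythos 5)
Your proposal is correct, but it follows a genuinely different route from the paper. The paper proves the weighted inequality directly by contradiction: assuming failure, it normalizes a sequence $w_n = v_n/\|v_n'\|_{L^2_{e^{-2t}}}$, extracts a subsequence converging weakly in $H^2(0,T)$ and strongly in $H^1(0,T)$ via the compact embedding, and derives the contradiction $w=0$ with $\|w'\|_{L^2_{e^{-2t}}}=1$. This is exactly the compactness--contradiction device that proves Ehrling's lemma, so your second (preferred) route is essentially the paper's argument outsourced to a named lemma after the harmless reduction $e^{-2T}\le e^{-2t}\le 1$ to the unweighted norms. Your first route --- integration by parts plus trace estimates --- is genuinely more elementary and, unlike the paper's proof, yields an explicit constant; but the absorption step deserves one more sentence of care. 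Writing $|v'(T)v(T)|\le \delta|v'(T)|^2+\tfrac{1}{4\delta}|v(T)|^2$ and then invoking the $\epsilon$-trace inequalities $|v'(T)|^2\le \epsilon_1\|v''\|^2_{L^2}+C_{\epsilon_1}\|v'\|^2_{L^2}$ and $|v(T)|^2\le \epsilon_2\|v'\|^2_{L^2}+C_{\epsilon_2}\|v\|^2_{L^2}$, the coefficient of $\|v'\|^2_{L^2}$ on the right is $\delta C_{\epsilon_1}+\tfrac{\epsilon_2}{4\delta}$; this is made less than $1$ only by choosing the parameters in the right order (fix $\epsilon_1$, then take $\delta$ small, then $\epsilon_2$ small), since a single Young parameter cannot do it. You correctly flag that the offending coefficient can be driven below $1$, but the final write-up should make this ordering explicit. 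In summary: both proofs are valid; the paper's is shorter but non-constructive, while your integration-by-parts variant buys an explicit constant at the cost of the bookkeeping just described.
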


\begin{proof}
Suppose, for contradiction, that the inequality does not hold. Then for each $n \in \mathbb{N}$, there exists a function $v_n \in H^2(0, T)$ such that
\[
\|v_n'\|^2_{L^2_{e^{-2t}}(0, T)} \geq n \left( \|v_n\|^2_{L^2_{e^{-2t}}(0, T)} + \|v_n''\|^2_{L^2_{e^{-2t}}(0, T)} \right).
\]
Define the normalized sequence $w_n := v_n / \|v_n'\|_{L^2_{e^{-2t}}(0, T)}$. Then $\|w_n'\|_{L^2_{e^{-2t}}(0, T)} = 1$ and
\[
n \left( \|w_n\|^2_{L^2_{e^{-2t}}(0, T)} + \|w_n''\|^2_{L^2_{e^{-2t}}(0, T)} \right) \leq 1.
\]
This implies that $\|w_n\|_{H^2(0, T)}^2 = \|w_n\|^2_{L^2(0, T)} + \|w_n'\|^2_{L^2(0, T)} + \|w_n''\|^2_{L^2(0, T)} \leq 2e^{2T}$.

Hence, $\{w_n\}$ is bounded in $H^2(0, T)$. By the compact embedding $H^2(0, T) \hookrightarrow H^1(0, T)$, there exists a subsequence $\{w_{n_k}\}$ and a function $w \in H^2(0, T)$ such that
\begin{align*}
w_{n_k} &\rightharpoonup w \quad \text{weakly in } H^2(0, T), \\
w_{n_k} &\to w \quad \text{strongly in } H^1(0, T).
\end{align*}

As a result, 
\[
\|w_{n_k}\|_{L^2_{e^{-2t}}(0, T)} \to \|w\|_{L^2_{e^{-2t}}(0, T)}, \quad 
\|w_{n_k}'\|_{L^2_{e^{-2t}}(0, T)} \to \|w'\_{L^2_{e^{-2t}}(0, T)}.
\]

However, from the normalization and the inequality above, we have $\|w_n'\|_{L^2_{e^{-2t}}(0, T)} = 1$ and $\|w_n\|_{L^2_{e^{-2t}}(0, T)} \leq 1/\sqrt{n} \to 0$ as $n \to \infty$. Therefore, $\|w\|_{L^2_{e^{-2t}}(0, T)} = 0$ and $\|w'\|_{L^2_{e^{-2t}}(0, T)} = 1$. This implies $w = 0$ and $w' \neq 0$, which is a contradiction.
\end{proof}

\begin{proof}[Proof of Theorem \ref{main-theorem}]
    The first statement of the theorem is a direct consequence of the classical Lax-Milgram theorem, and we omit the standard details.

To prove the second part, we invoke Theorem~\ref{thm2.1}. It is straightforward to verify that the projection $P^N \mathbf{u}_{tt}^*$ converges to $\mathbf{u}_{tt}^*$ in $L^2((0, T); H^p(\Omega)^d)$ as $N \to \infty$. Moreover, by Theorem~\ref{thm2.1}, the second time derivative $\partial_{tt} P^N \mathbf{u}^*$ also converges to $\mathbf{u}_{tt}^*$ in the same norm. Consequently, we obtain the estimate
\[
\lim_{N \to \infty} \int_0^Te^{-2t} \left\| P^N \mathbf{u}_{tt}^*(\mathbf{x}, t) - \partial_{tt} P^N \mathbf{u}^*(\mathbf{x}, t) \right\|^2_{H^p(\Omega)^d} \,  {\rm d}t = 0.
\]
Therefore, for any $\delta > 0$, there exists a threshold $N(\delta) > 0$ such that
\begin{equation}
\int_0^T e^{-2t}\left\| P^N \mathbf{u}_{tt}^*(\mathbf{x}, t) - \partial_{tt} P^N \mathbf{u}^*(\mathbf{x}, t) \right\|^2_{H^p(\Omega)^d} \,  {\rm d}t \leq \delta^2
\quad \text{for all } N \geq N(\delta),
\label{4.6}
\end{equation}
which implies the estimate~\eqref{4.4}.

    We now proceed to prove the third part of the theorem. 
    For convenience, from this point onward, let $C$ denote a generic positive constant that may vary from line to line but depends only on $T$, $d$, and the domain $\Omega$.

	For each $N \geq 0$, let $ \bu^*_N =  \begin{bmatrix}
            \bu^*_0 & \bu^*_1 & \dots & \bu^*_N
        \end{bmatrix}^{\top}$ be the vector of Fourier modes of $\bu^*$ with respect to the Legendre polynomial-exponential basis. Let $P^N \bu^*$ and $P^N \bu^*_{tt}$ denote the projections of $\bu^*$ and $\bu^*_{tt}$ onto the subspace $\mathbb{V}_N$ with respect to the time variable, as defined in Definition~\ref{def2.2} and equation~\eqref{PN}.
	 
    It is obvious that the space-time expansion  $\mathbb{S}^N[\bu^*_N]$ is exactly $ P^N \bu^*$. By the definition of the mismatch functional $J_{N, \eta, \delta}$ in \eqref{J_functional} and by Lemma \ref{lem4.1},
    we have
    \begin{align}
        J_{N, \eta, \delta}(U^{N, \eta, \delta}_{\rm min}) &\leq J_{N, \eta, \delta}(\bu^*_N) \notag\\
        &= \int_0^T e^{-2t}\| P^N \Div (\mathbb{C}:\nabla  \bu^*) - \partial_{tt} P^N \bu^*\|^2_{H^p(\Omega)^d}  {\rm d}t  
        + \int_{\Gamma_T}e^{-2t} \left| P^N \bu^*(\mathbf{x}, t) - P^N \mathbf{f}^\delta(\mathbf{x}, t) \right|^2 \,  {\rm d}\sigma(\mathbf{x})  {\rm d}t
        \notag
        \\
&\quad + \int_{\Gamma_T} e^{-2t}\left| \partial_\nu P^N \bu^*(\mathbf{x}, t) - P^N \mathbf{g}^\delta(\mathbf{x}, t) \right|^2 \,  {\rm d}\sigma(\mathbf{x})  {\rm d}t 
+ \eta \left\| P^N \bu^* \right\|^2_{L^2_{e^{-2t}}((0, T); H^{2+p}(\Omega)^d)}
\notag
\\
&\leq
	\int_{0}^Te^{-2t} \| P^N \bu_{tt}^*(\x, t) - \partial_{tt} P^N \bu^*\|^2_{H^p(\Omega)^d}   {\rm d}t 
	+ \delta^2 
	+ \eta \|\bu^*\|_{L^2_{e^{-2t}}((0, T); H^{2 + p}(\Omega)^d)}^2.
	\label{4.5}
    \end{align}
    
It follows from Lemma \ref{lem4.1}, \eqref{J_functional}, \eqref{4.5} and \eqref{4.6} that
\begin{equation}
	\eta \|\mathbb{S}^N[U^{N, \eta, \delta}_{\rm min}]\|_{L^2_{e^{-2t}}((0, T); H^{2 + p}(\Omega)^d)}^2 \leq J(U^{N, \eta, \delta}_{\rm min})
	\leq 2\delta^2 + \eta \|\bu^*\|_{L^2_{e^{-2t}}((0, T); H^{2 + p}(\Omega)^d)}^2
    \label{Estimate_J_U_min}
\end{equation}
provided that $N \geq N(\delta).$ 
Thus, using the choice of $(N, \eta, \delta) \in \Theta$ with $\eta > \delta^2$, we have
\begin{equation}
	\|\mathbb{S}^N[U^{N, \eta, \delta}_{\rm min}]\|_{L^2_{e^{-2t}}((0, T); H^{2 + p}(\Omega)^d)}^2
	\leq \frac{2\delta^2}{\eta} + \|\bu^*\|_{L^2_{e^{-2t}}((0, T); H^{2 + p}(\Omega)^d)}^2.
	\label{4.7}
\end{equation}
By the triangle inequality, we have
\begin{align}
	\|\partial_{tt}  \mathbb{S}^N[U^{N, \eta, \delta}_{\rm min}]\|_{L^2_{e^{-2t}}((0, T); H^p(\Omega)^d)} 	
	\notag
	&\leq 
	\|\Div(\mathbb{C} : \nabla (\mathbb{S}^N[U^{N, \eta, \delta}_{\rm min}])) - \partial_{tt}\mathbb{S}^N[U^{N, \eta, \delta}_{\rm min}]\|_{L^2_{e^{-2t}}((0, T); H^p(\Omega)^d)} 
	\notag
	\\
	&\hspace{4cm} +\|\Div(\mathbb{C} : \nabla [\mathbb{S}^N[U^{N, \eta, \delta}_{\rm min}]])\|_{L^2_{e^{-2t}}((0, T); H^p(\Omega)^d)} \notag
	\\
	&\leq J_{N, \eta, \delta}(U^{N, \eta, \delta}_{\rm min}) + C\|\mathbb{S}^N[U^{N, \eta, \delta}_{\rm min}]\|_{L^2_{e^{-2t}}((0, T); H^{2 + p}(\Omega)^d)}
	\label{4.8}
\end{align}
for all $(N, \eta, \delta)$ in $\Theta$.
Combining \eqref{4.5}, \eqref{4.6}, \eqref{4.7}, and \eqref{4.8} gives
\begin{multline}
	\|\partial_{tt}  \mathbb{S}^N[U^{N, \eta, \delta}_{\rm min}]\|_{L^2_{e^{-2t}}((0, T); H^{p}(\Omega)^d)} 
	+
	\|\mathbb{S}^N[U^{N, \eta, \delta}_{\rm min}]\|_{L^2_{e^{-2t}}((0, T); H^{2 + p}(\Omega)^d)}
	\\
	\leq 
	\frac{2\delta^2}{\eta} 
	+ \|\bu^*\|_{L^2_{e^{-2t}}((0, T); H^{2 + p}(\Omega)^d} \leq C \|\bu^*\|_{L^2_{e^{-2t}}((0, T); H^{2 + p}(\Omega)^d}
    \label{4.8888}
\end{multline}
for all $(N, \eta, \delta)$ in $\Theta$.
Applying Lemma \ref{lem4.2} and using \eqref{4.8888}, we have
\begin{equation}
    \|\partial_{t}  \mathbb{S}^N[U^{N, \eta, \delta}_{\rm min}]\|_{L^2_{e^{-2t}}((0, T); H^{p}(\Omega)^d)} \leq C\|\bu^*\|_{L^2_{e^{-2t}}((0, T); H^{2 + p}(\Omega)^d}.
    \label{4.9}
\end{equation}
Due to \eqref{4.7}, \eqref{4.8888}, and \eqref{4.9}, the set 
\[
	\Big\{\mathbb{S}^N [U^{N, \eta, \delta}_{\rm min}]: (N, \eta, \delta) \in\Theta \Big\}
\] is bounded in $L^2((0, T); H^{2 + p}(\Omega)^d) \cap H^2((0, T); H^p(\Omega)^d)$.
Using the compactness argument, we can assume, without loss of generality, that 
\begin{align*}    
    &\mathbb{S}^N[U^{N, \eta, \delta}_{\rm min}] \to {\bf z} \quad \mbox{strongly in } {L^2_{e^{-2t}}((0, T); H^{p}(\Omega)^d)},\\
    &\partial_t \mathbb{S}^N[U^{N, \eta, \delta}_{\rm min}] \to {\bf z}_t \quad \mbox{strongly in } {L^2_{e^{-2t}}((0, T); H^{p}(\Omega)^d)},\\
     &\mathbb{S}^N[U^{N, \eta, \delta}_{\rm min}] \rightharpoonup  {\bf z} \quad \mbox{weakly in } {L^2_{e^{-2t}}((0, T); 
     L^2(\partial\Omega)^d)},\\
    &\partial_{\nu} \mathbb{S}^N[U^{N, \eta, \delta}_{\rm min}] \rightharpoonup \partial_\nu {\bf z} \quad \mbox{weakly in } {L^2_{e^{-2t}}((0, T); L^2(\partial\Omega)^d)},
\end{align*}
as $\Theta \ni (N, \eta, \delta) \to (\infty, 0^+, 0^+)$ for some vector-valued function $\bf{z}.$ 
Due to \eqref{J_in_t} and \eqref{Estimate_J_U_min}, the vector-valued function ${\bf z}$ satisfies 
\begin{multline*}
    \int_0^T e^{-2t}\left\| \Div(\mathbb{C} : \nabla \mathbf{z}(\mathbf{x}, t)) - \partial_{tt} \mathbf{z}(\mathbf{x}, t) \right\|^2_{H^p(\Omega)^d}  {\rm d}t 
\\
+ \int_{\Gamma_T} e^{-2t}\left| \mathbf{z}(\mathbf{x}, t) -  \mathbf{f}^*(\mathbf{x}, t) \right|^2 \,  {\rm d}\sigma(\mathbf{x})  {\rm d}t
+ \int_{\Gamma_T} \left| \partial_\nu \mathbf{z}(\mathbf{x}, t) -  \mathbf{g}^*(\mathbf{x}, t) \right|^2 \,  {\rm d}\sigma(\mathbf{x})  {\rm d}t 
\leq 0.
\end{multline*}
Thus, ${\bf z}$ satisfies \eqref{2.1111}. 
Due to \eqref{4.7} and the convergence of $\mathbb{S}^N[U^{N, \eta, \delta}_{\rm min}]$ to $\bf z$, 
\[
\|{\bf z}\|_{L^2_{e^{-2t}}((0, T); H^{2 + p}(\Omega)^d)} \leq  \|{\bf u}^*\|_{L^2_{e^{-2t}}((0, T); H^{2 + p}(\Omega)^d)}.
\]
By the minimum norm property of $\bu^*$, ${\bf z} = \bu^*.$

\end{proof}

\begin{remark}
	\begin{enumerate}
	    \item Although our theoretical framework does not require uniqueness, it is highly plausible that problem~\eqref{2.1111} admits a unique solution due to the presence of Cauchy boundary data. In such cases, the time-dimensional reduction method not only produces a stable approximation but also guarantees convergence to the true physical solution. This reinforces the practical reliability of our approach in realistic settings where uniqueness is expected.

	    \item Importantly, the convergence result in Theorem~\ref{main-theorem} remains valid even when the boundary data for the inverse problem are available only on a proper subset of the boundary. That is, if the measurements are restricted to
	    \begin{equation}
	    	\mathbf{f}(\mathbf{x}, t) = \mathbf{u}(\mathbf{x}, t), \quad
	    	\mathbf{g}(\mathbf{x}, t) = \partial_\nu \mathbf{u}(\mathbf{x}, t)
	    	\quad \text{for } (\mathbf{x}, t) \in \Gamma \times [0, T],
	    	\nonumber
	    \end{equation}
	    where \( \Gamma \subset \partial \Omega \), the proposed method and the convergence analysis remain applicable. This flexibility significantly enhances the scope of the method in practical inverse problems, where full boundary data are often unavailable.
	\end{enumerate}
\end{remark}

\begin{remark}
    A direct consequence of \eqref{main_convergence} is the following:
\begin{align*}
    \mathbb{S}^N [U^{N, \eta, \delta}_{\min}](\cdot, 0) &\to \bu^*(\cdot, 0) \quad \text{strongly in } H^p(\Omega)^d, \\
    \partial_t \mathbb{S}^N [U^{N, \eta, \delta}_{\min}](\cdot, 0) &\rightharpoonup \bu^*_t(\cdot, 0) \quad \text{weakly in } H^p(\Omega)^d,
\end{align*}
as $\Theta \ni (N, \eta, \delta) \to (\infty, 0^+, 0^+)$.  
These convergence results guarantee that the initial displacement field $\mathbf{p}$ and the initial velocity field $\mathbf{q}$ can be approximated by $\mathbb{S}^N [U^{N, \eta, \delta}_{\min}](\cdot, 0)$ and $\partial_t \mathbb{S}^N [U^{N, \eta, \delta}_{\min}](\cdot, 0)$, respectively.
\end{remark}

\section{Numerical study} \label{sec_num}

We numerically solve Problem \ref{isp} in two-dimensional case ($ d = 2 $) and define the computational domain as $ \Omega = (-1, 1)^2 $. To generate boundary data on $ \partial\Omega$ , we need to solve the elastic wave equation in \eqref{main_ivp}. Since computations over the unbounded domain $\mathbb{R}^2$ are not feasible, we approximate it by a sufficiently large bounded domain $ G = (-3, 3)^2 $. The forward problem is numerically solved on this larger domain using an explicit finite difference scheme, and the resulting solution is then restricted to the original domain $ \Omega $.
To carry out the discretization, we construct a uniform spatial grid $ \mathcal{G} $ consisting of $ 121 \times 121 $ points on $ G $. For the temporal discretization, we set the final time $ T = 1 $ and define a uniform partition $ \mathcal{T} $ consisting of 6400 time steps. The details of the numerical steps using an explicit finite difference scheme for solving the forward problem on the discrete domain $ \mathcal{G} \times \mathcal{T} $, as well as the domain truncation and restriction procedures, are standard in the literature and are therefore omitted here for brevity.

Assuming the forward simulation has been carried out, let $\mathbf{u}^*$ denote the exact (noiseless) solution. The corresponding boundary data are extracted as
\[
\mathbf{f}^* = \mathbf{u}^*|_{\partial \Omega \times [0, T]}, \quad 
\mathbf{g}^* = \partial_{\nu} \mathbf{u}^*|_{\partial \Omega \times [0, T]}.
\]
To simulate measurement noise, the noisy data are then defined by
\begin{equation}
\mathbf{f} = \mathbf{f}^* (1 + \delta\, \mathrm{rand}), \quad 
\mathbf{g} = \mathbf{g}^* (1 + \delta\, \mathrm{rand}),
\end{equation}
where $\delta = 10\%$, and $\mathrm{rand}$ is a function generating uniformly distributed random values in the interval $[-1, 1]$. For simplicity in implementation, we set $p = 0$ in our computation. In this case, $H^p(\Omega)$ is understood as $L^2(\Omega).$

With the boundary data available, we proceed to solve the inverse problem. The key computational steps of our proposed method, based on the time dimension reduction framework, are outlined in Algorithm~\ref{alg}.
\begin{algorithm}[h!]
\caption{\label{alg}Solution procedure for Problem~\ref{isp} via the time dimension reduction method}
\begin{algorithmic}[1]
\State Select a truncation parameter $N$ and a regularization parameter $\eta$.
\State Solve the minimization problem for the strictly convex functional $J_{N, \eta, \delta}$ defined in \eqref{J_functional}. Let the minimizer be denoted by
\[
U^{\rm comp} = \begin{bmatrix} 
\bu_0^{\rm comp} & \bu_1^{\rm comp} & \dots & \bu_N^{\rm comp}
\end{bmatrix}^\top.
\]
\State Reconstruct the displacement field $\bu^{\rm comp}$ using the truncated series expansion:
\[
\bu^{\rm comp}(\mathbf{x}, t) = \mathbb{S}^N[U^{\rm comp}] = \sum_{n=0}^N \bu_n^{\rm comp}(\mathbf{x}) \Psi_n(t),
\]
for all $(\mathbf{x}, t) \in \Omega_T$.
\State Compute the approximate initial displacement field:
\[
\mathbf{p}^{\rm comp}(\mathbf{x}) = \bu^{\rm comp}(\mathbf{x}, 0),
\]
for all $\mathbf{x} \in \Omega$.
\State Compute the approximate initial velocity field:
\[
\mathbf{q}^{\rm comp}(\mathbf{x}) = \partial_t \bu^{\rm comp}(\mathbf{x}, 0),
\]
for all $\mathbf{x} \in \Omega$.
\end{algorithmic}
\end{algorithm}

\subsection{Numerical implementation of Algorithm~\ref{alg}}

In this subsection, we detail the numerical implementation of the proposed Algorithm~\ref{alg}, including the selection of parameters, solution of the optimization problem, and practical considerations relevant to reproducing the results.

\textit{Step 1}. The parameters $ N $ and $ \eta $ are selected through a trial-and-error process, using Test~1 in Subsection \ref{subsec_example} as a reference case. Specifically, these values are tuned such that the numerical reconstruction for Test~1 yields an acceptable level of accuracy. Once satisfactory performance is achieved in this benchmark scenario, the same parameters are fixed and subsequently applied to all other test cases without further adjustment. This strategy ensures consistency in evaluating the robustness of the proposed method across different scenarios. In our computation, we set $ N = 30 $ and $ \eta = 10^{-6} $.

\textit{Step 2}. Each integrand appearing in the definition of the functional $ J_{N, \eta, \delta} $ in \eqref{J_functional} is the square of an affine function with respect to the unknown vector $W $. Consequently, minimizing $ J_{N, \eta, \delta} $ reduces to solving a standard least-squares optimization problem. In our implementation, this is carried out using MATLAB's built-in function \texttt{lsqlin}. This step is straightforward to execute. For guidance on the syntax and input format of this command, users are referred to MATLAB's documentation, which can be accessed by entering \texttt{help lsqlin} in the MATLAB command window.

The implementation of the remaining steps is straightforward.

\subsection{Numerical examples} \label{subsec_example}

We present 3 numerical tests.

\subsubsection*{Test 1}
In this test case, the elasticity tensor $\mathbb{C} = (C_{ijkl})_{i,j,k,l=1}^2$ represents a homogeneous and isotropic elastic medium, and is defined component-wise by
\[
C_{ijkl} = \lambda \, \delta_{ij} \delta_{kl} + \mu \left( \delta_{ik} \delta_{jl} + \delta_{il} \delta_{kj} \right),
\]
for all indices $i, j, k, l \in \{1, 2\}$, where $\delta_{ij}$ denotes the Kronecker delta. The Lam\'e parameters are set to $\mu = 2$ and $\lambda = 1$.

The first and second components of the true initial displacement field ${\bf p}^{\rm true}$ are given respectively by
\[
p_1^{\rm true}(x, y) =
\begin{cases}
1 & \text{if } 3(x - 0.5)^2 + y^2 \leq 0.6^2, \\
0 & \text{otherwise},
\end{cases}
\quad
p_2^{\rm true}(x, y) =
\begin{cases}
1 & \text{if } x^2 + 3(y - 0.5)^2 \leq 0.6^2, \\
0 & \text{otherwise}.
\end{cases}
\]
The first and second components of the true initial velocity field ${\bf q}^{\rm true}$ are defined respectively as
\[
q_1^{\rm true}(x, y) =
\begin{cases}
30 & \text{if } 3(x - 0.5)^2 + y^2 \leq 0.6^2, \\
0 & \text{otherwise},
\end{cases}
\quad
q_2^{\rm true}(x, y) =
\begin{cases}
30 & \text{if } x^2 + 3(y + 0.4)^2 \leq 0.3^2, \\
0 & \text{otherwise}.
\end{cases}
\] 

\begin{figure}[h!]
\centering
\subfloat[$ p_1^{\rm true} $]{
\includegraphics[width=0.23\textwidth]{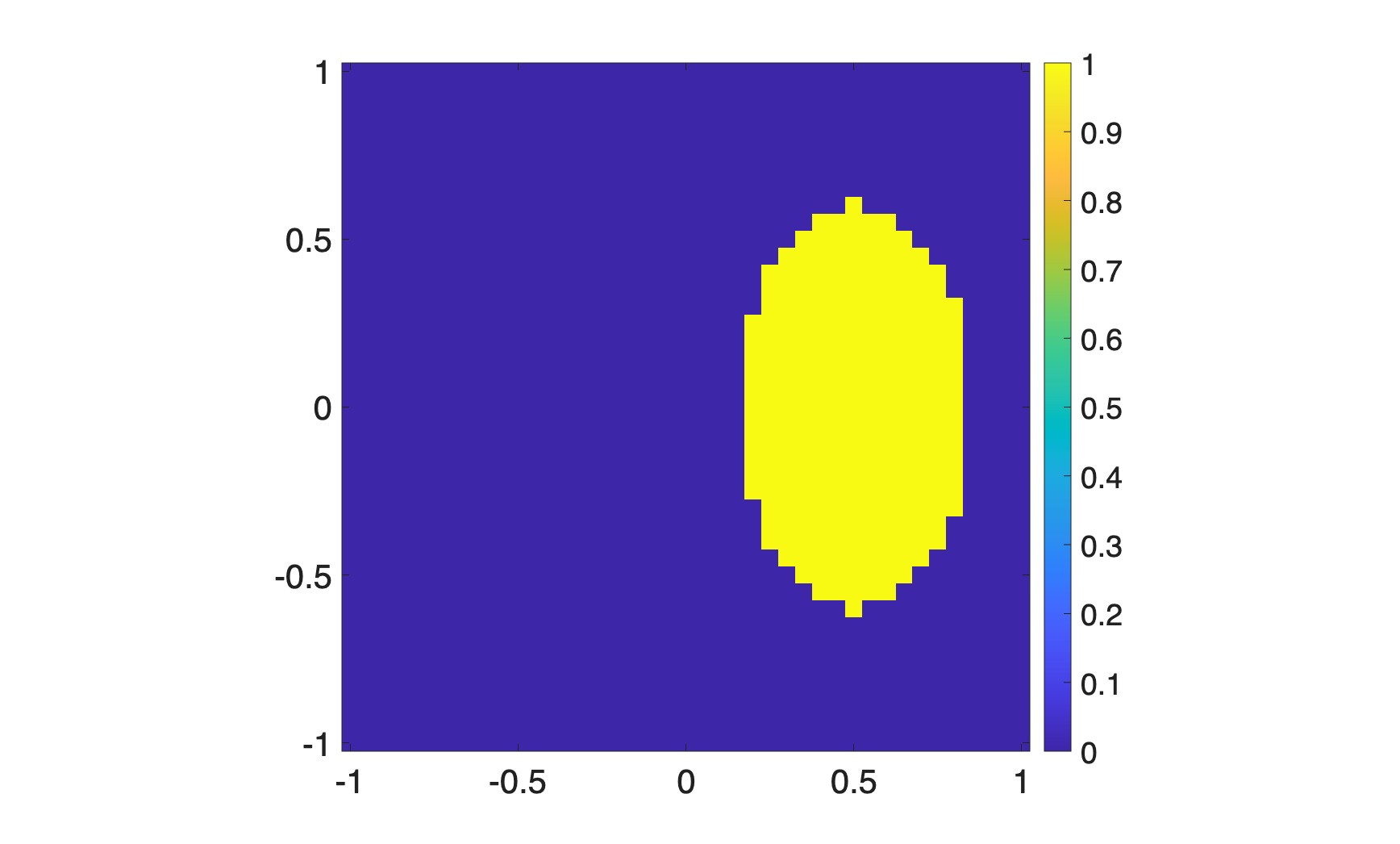}
}
\hfill
\subfloat[ $ p_2^{\rm true} $]{
\includegraphics[width=0.23\textwidth]{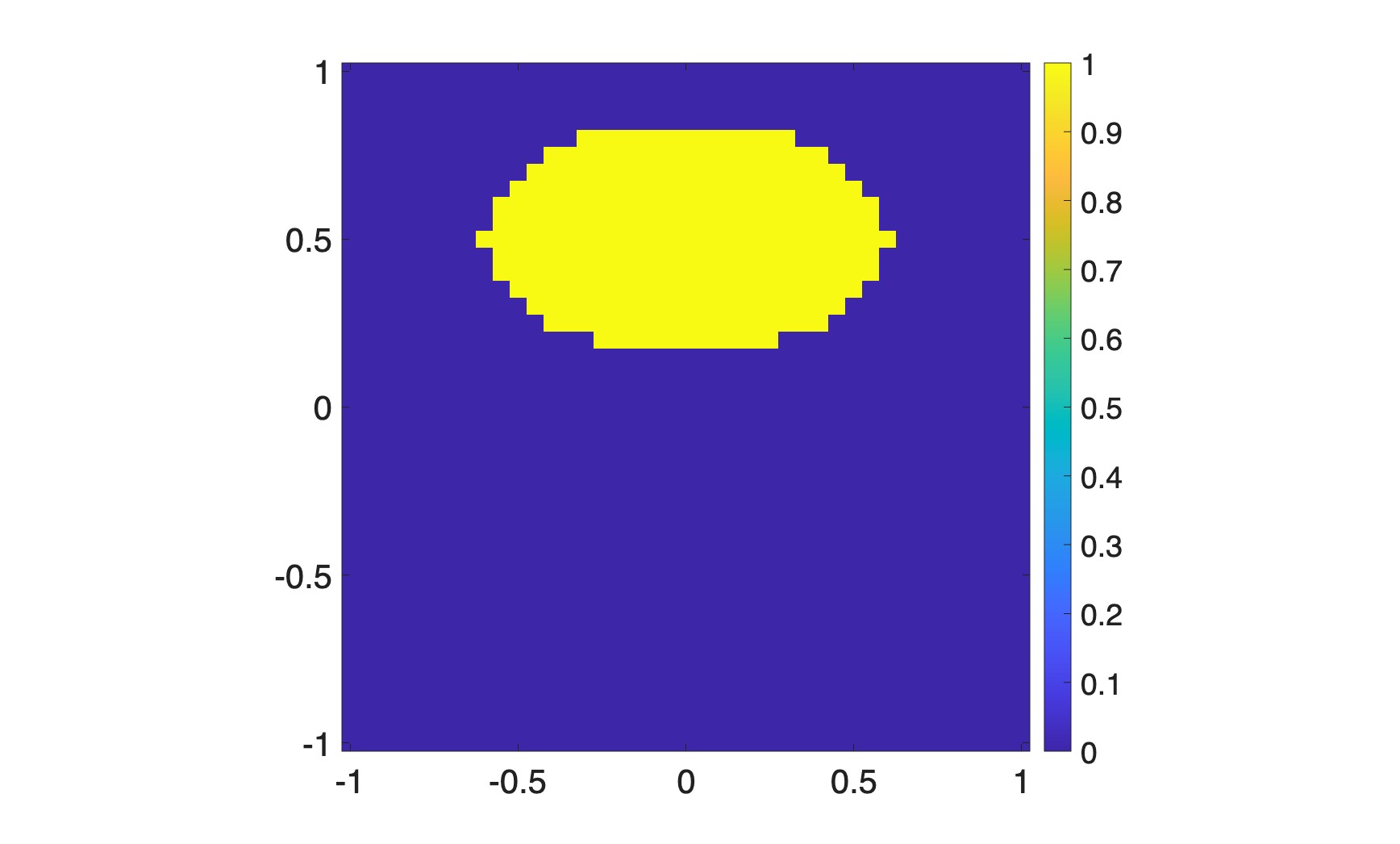}
}
\hfill \subfloat[ $ q_1^{\rm true} $]{
\includegraphics[width=0.23\textwidth]{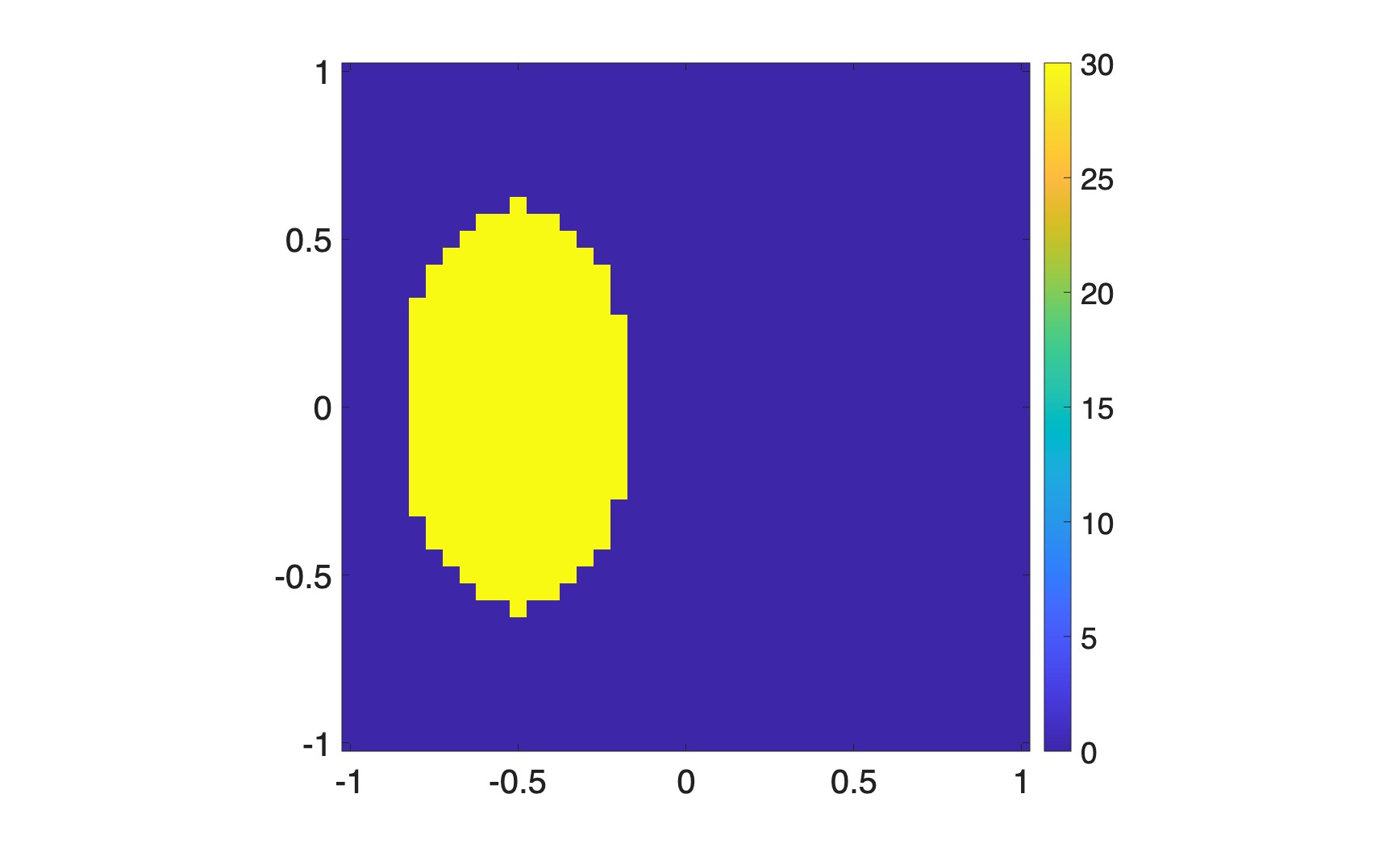}
}
\hfill \subfloat[ $ q_2^{\rm true} $]{
\includegraphics[width=0.23\textwidth]{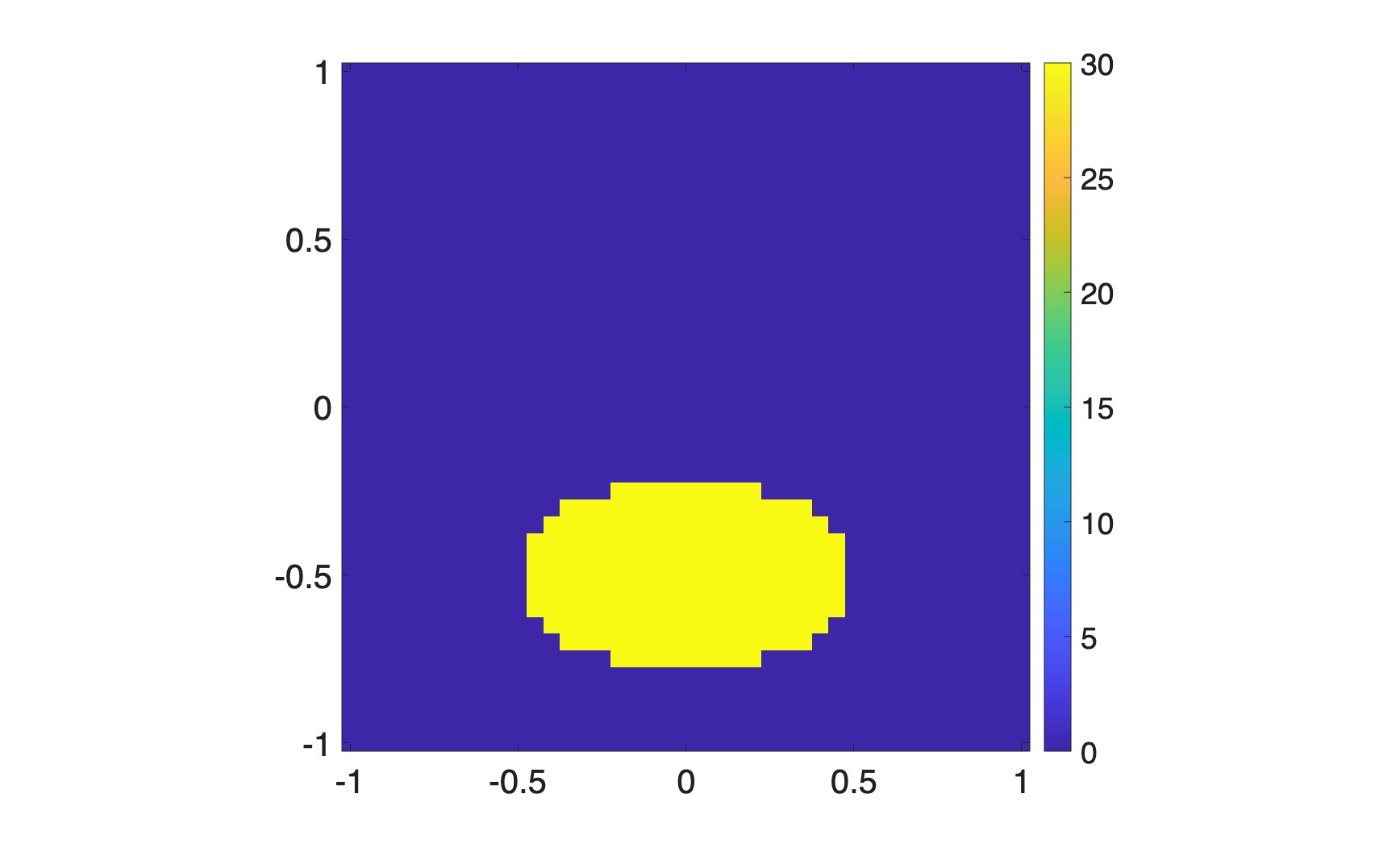}
}

\subfloat[ $ p_1^{\rm comp} $]{
\includegraphics[width=0.23\textwidth]{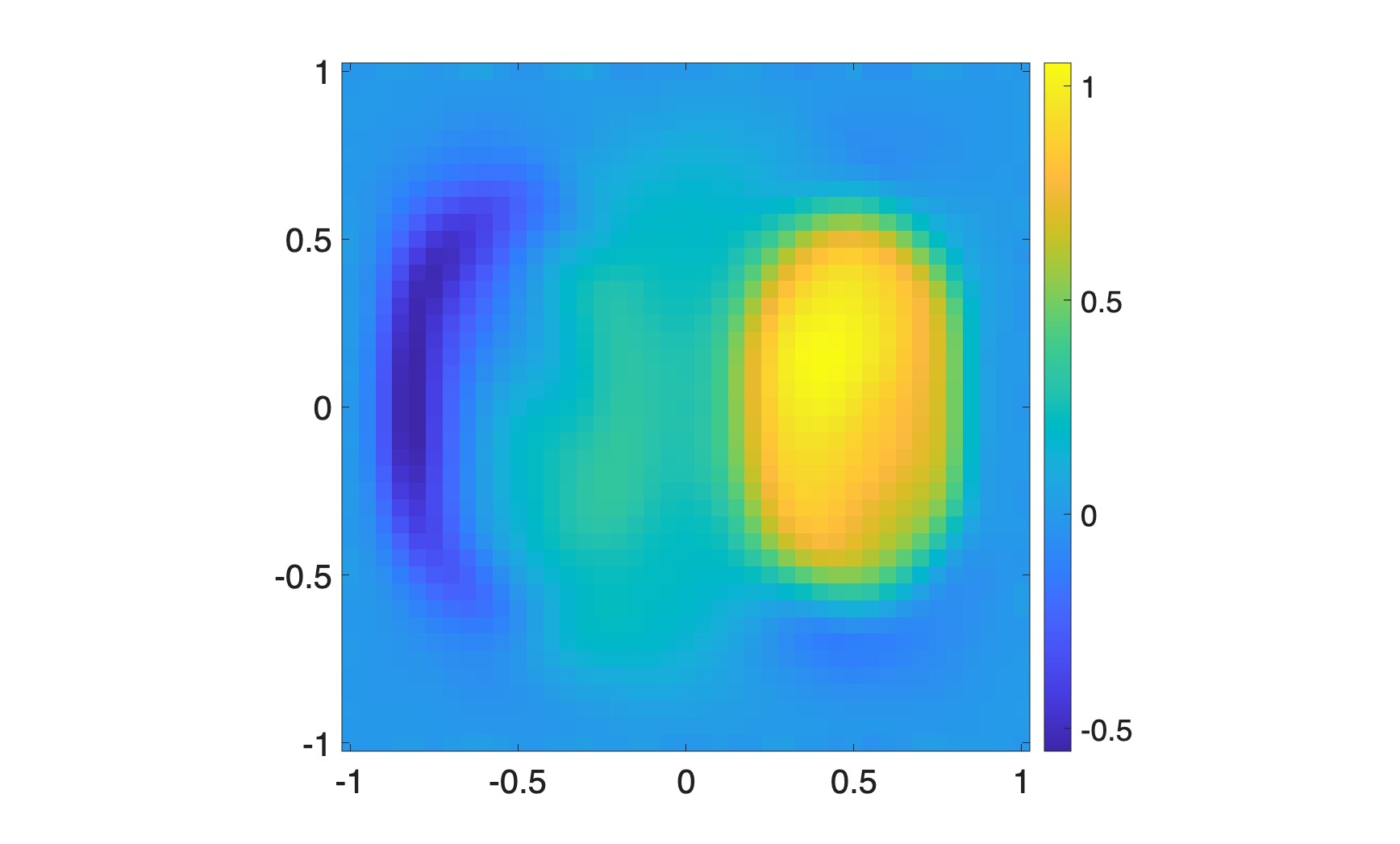}
}
\hfill
\subfloat[ $ p_2^{\rm comp} $]{
\includegraphics[width=0.23\textwidth]{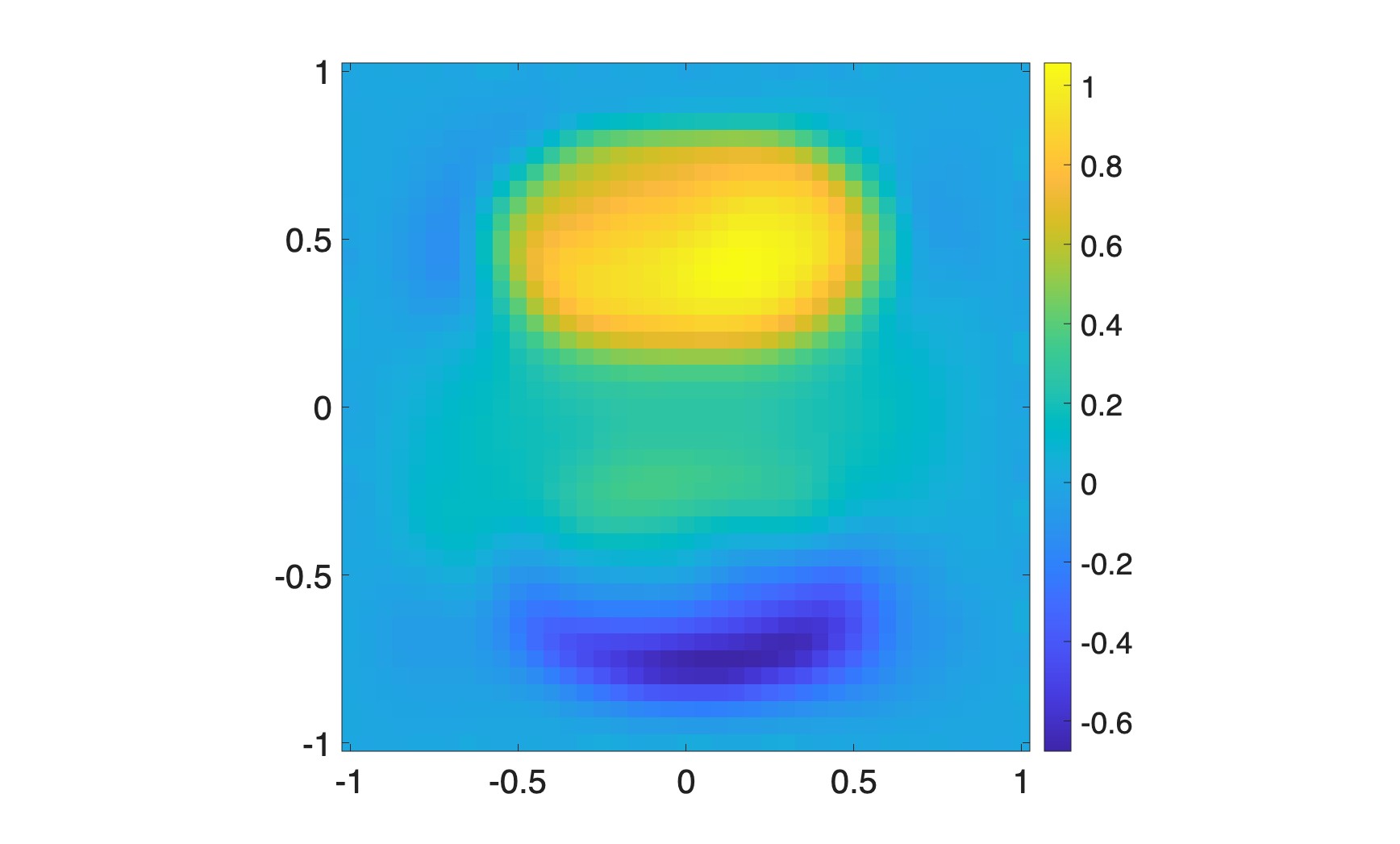}
}
\hfill
\subfloat[$ p_2^{\rm comp} $]{
\includegraphics[width=0.23\textwidth]{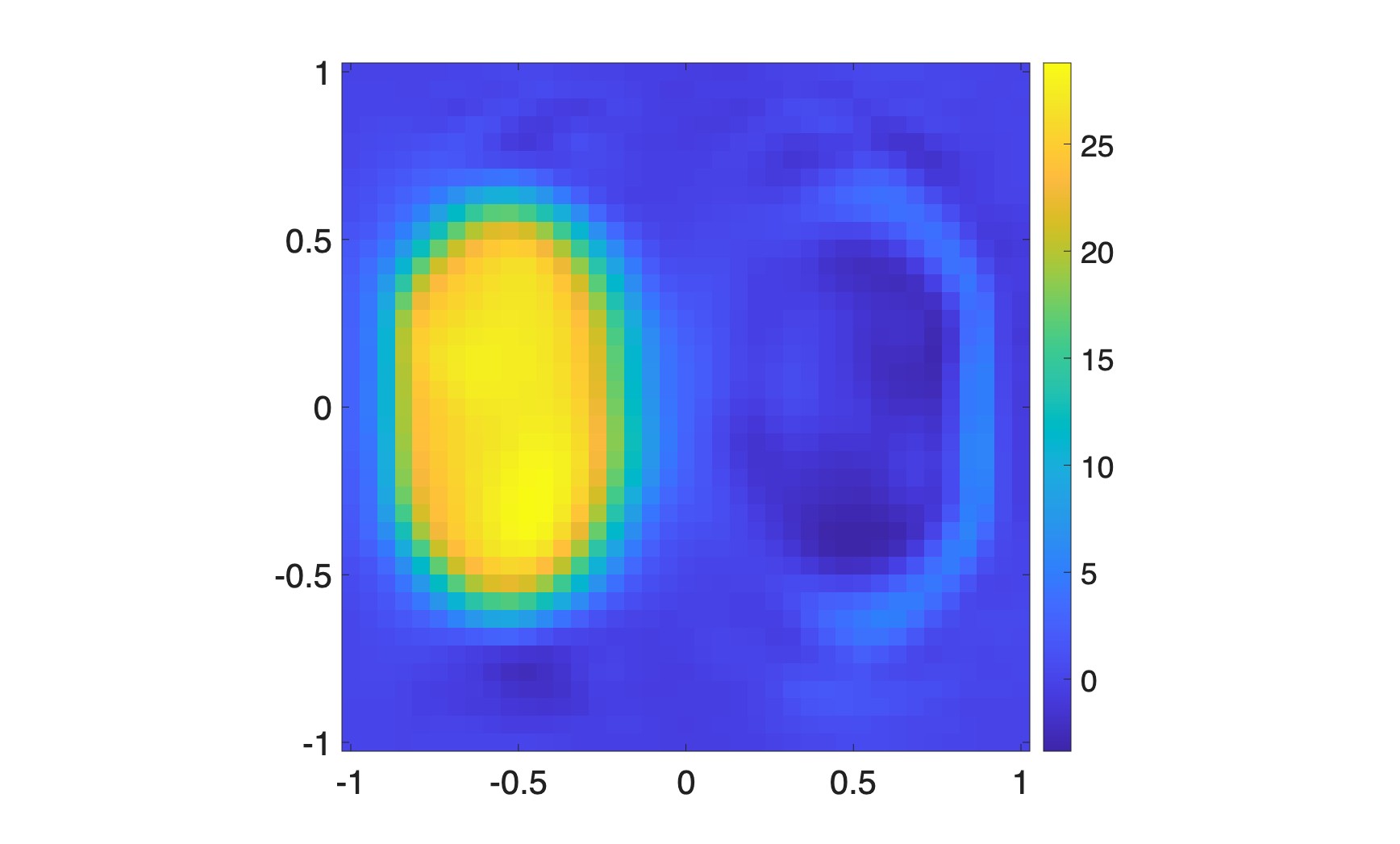}
}
\hfill
\subfloat[ $ p_2^{\rm comp} $]{
\includegraphics[width=0.23\textwidth]{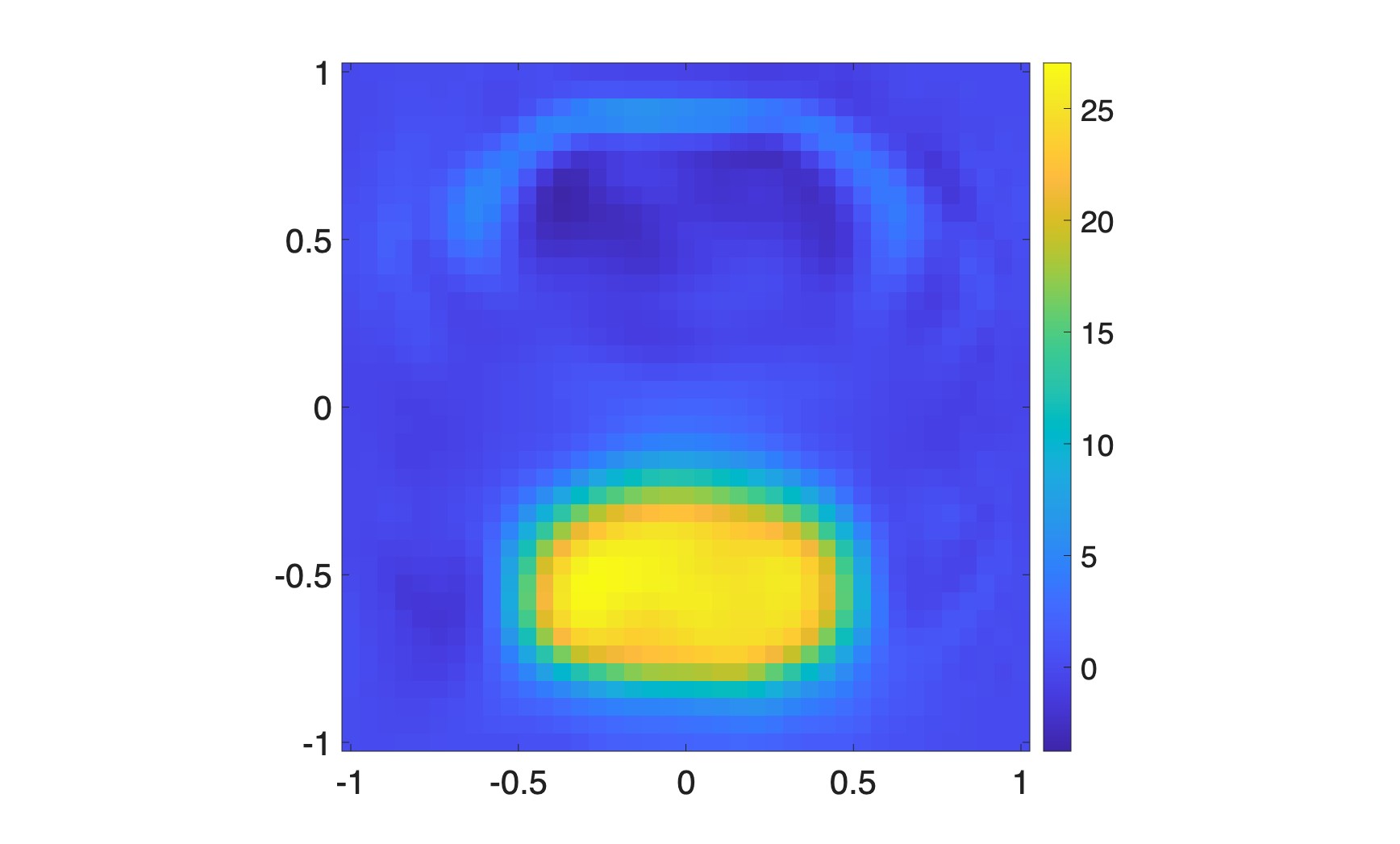}
}

\caption{\label{fig_test1} 
Comparison between the true and reconstructed initial displacement and velocity components for Test  1. The top row (a)--(d) shows the true initial data: displacement components $p_1^{\rm true}$ and $p_2^{\rm true}$, and velocity components $q_1^{\rm true}$ and $q_2^{\rm true}$. The bottom row (e)--(h) displays the corresponding reconstructed components $p_1^{\rm comp}$, $p_2^{\rm comp}$, $q_1^{\rm comp}$, and $q_2^{\rm comp}$ obtained by the proposed time-reduction method.
}
\end{figure}

In this test, each component of the initial displacement and velocity fields is modeled by a single elliptical inclusion located asymmetrically within the domain to ensure spatial distinguishability between components.
The computational results in Figure~\ref{fig_test1} demonstrate the effectiveness of the proposed time-reduction method in recovering the initial displacement and velocity fields. The reconstructed functions $p_1^{\rm comp}$, $p_2^{\rm comp}$, $q_1^{\rm comp}$, and $q_2^{\rm comp}$ show good agreement with their true counterparts. In particular, the shape, size, and location of the elliptical inclusions are well captured, indicating that the method successfully detects the key structural features of the initial conditions. 

However, some artifacts are visible in the reconstructions, especially in the velocity components. These distortions are likely due to the coupled nature of the elastic wave equation, where interactions between compressional and shear waves introduce complexities that can challenge the inversion process, particularly when only partial boundary data are used. Nevertheless, the reconstructions provide a qualitatively accurate approximation of the ground truth and affirm the reliability of the proposed numerical scheme.

In addition to successfully recovering the shape and location of the inclusions, the numerical reconstruction also provides a reasonable approximation of the amplitude of the displacement and velocity fields. Specifically, the reconstructed maximum value of $p_1^{\rm comp}$ is 1.0537, corresponding to a relative error of 5.37\%. For $p_2^{\rm comp}$, the maximum is  1.0563 with a relative error of 5.63\%. The maximum of $q_1^{\rm comp}$ is 28.7903, yielding a relative error of 4.03\%, while $q_2^{\rm comp}$ reaches 27.0403 with a relative error of 9.87\%. These quantitative results indicate that the time-dimensional reduction method achieves satisfactory accuracy, particularly for the displacement components, despite the inverse nature and coupling effects inherent in the elastic system.

\subsubsection*{Test 2} 
We conduct the numerical experiment in the setting of a nonhomogeneous isotropic medium, where the elasticity tensor $ \mathbb{C} = (C_{ijkl}(x, y)) $ varies spatially and is defined as
\begin{equation}
C_{ijkl}(x, y) = \lambda(x, y) \, \delta_{ij} \delta_{kl} + \mu(x, y) \left( \delta_{ik} \delta_{jl} + \delta_{il} \delta_{kj} \right),
\end{equation}
for all $ (x, y) \in \mathbb{R}^2 $. The spatially dependent Lam\'e parameters are given by
\[
\mu(x, y) = 2 + \sin(xy), 
\quad \text{and} \quad 
\lambda(x, y) = 1 + e^{-0.5(x^2 + y^2)}.
\]
This formulation reflects an isotropic material with smoothly varying stiffness across the domain.

In this test case, the true initial displacement $\mathbf{p}^{\rm true} = (p_1^{\rm true}, p_2^{\rm true})$ and velocity $\mathbf{q}^{\rm true} = (q_1^{\rm true}, q_2^{\rm true})$ are defined as follows:
\begin{align*}
p_1^{\rm true}(x, y) &=
\begin{cases}
2, & \text{if } 3(x + 0.3)^2 + 15(y - 0.3)^2 < 0.8^2, \\
0, & \text{otherwise},
\end{cases}
\\
p_2^{\rm true}(x, y) &=
\begin{cases}
2, & \text{if } 18(x - 0.3)^2 + 4y^2 < 1, \\
0, & \text{otherwise}
\end{cases}
\end{align*}
and
\begin{align*}
q_1^{\rm true}(x, y) &=
\begin{cases}
20, & \text{if } \max(18(x - 0.5)^2, 4y^2) < 1, \\
0, & \text{otherwise},
\end{cases}
\\
q_2^{\rm true}(x, y) &=
\begin{cases}
20, & \text{if } \max(15(x + y + 0.3)^2, 4y^2) < 1, \\
0, & \text{otherwise}.
\end{cases}
\end{align*}

\begin{figure}[h!]
\centering
\subfloat[ $ p_1^{\rm true} $]{
\includegraphics[width=0.23\textwidth]{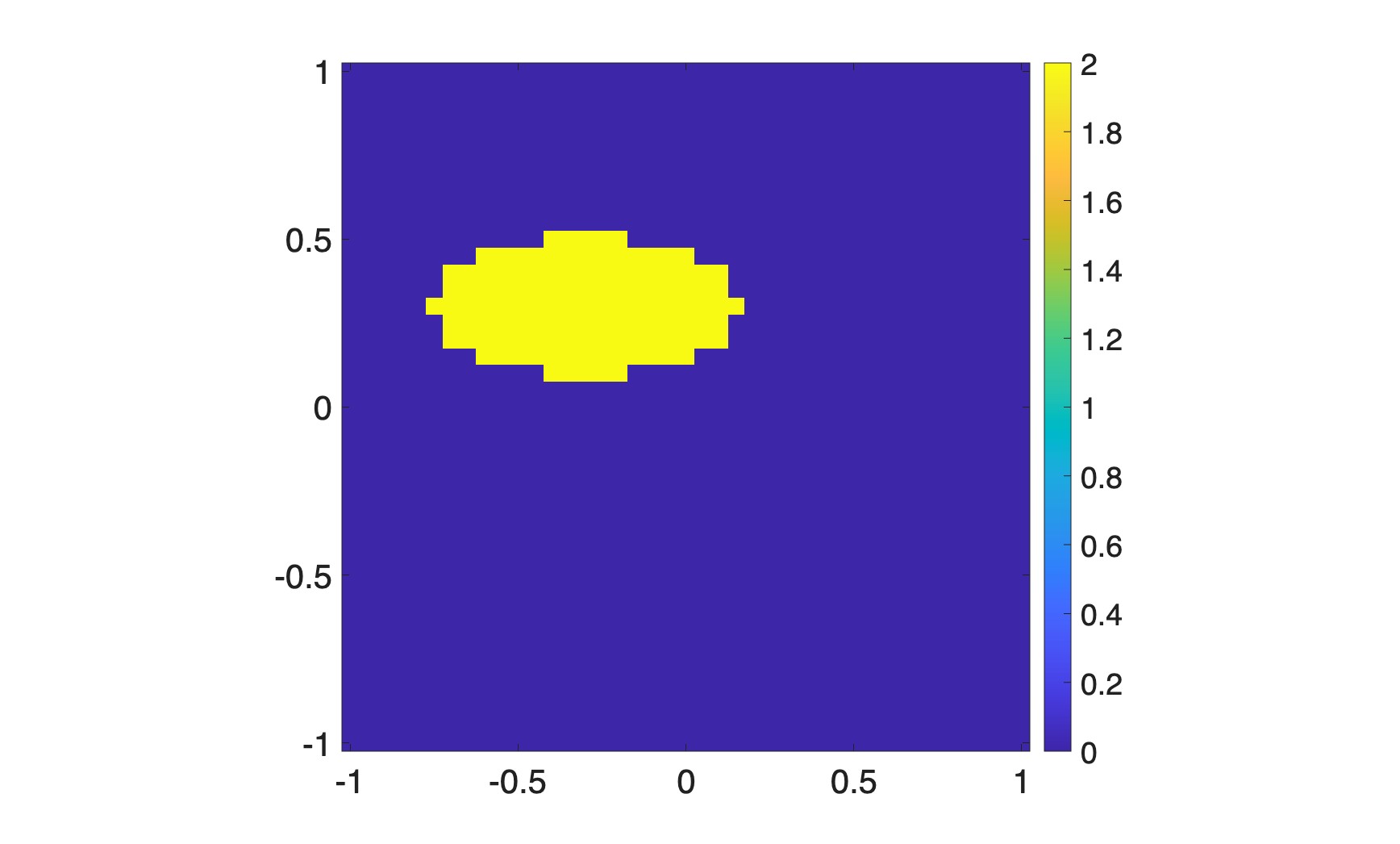}
}
\hfill
\subfloat[ $ p_2^{\rm true} $]{
\includegraphics[width=0.23\textwidth]{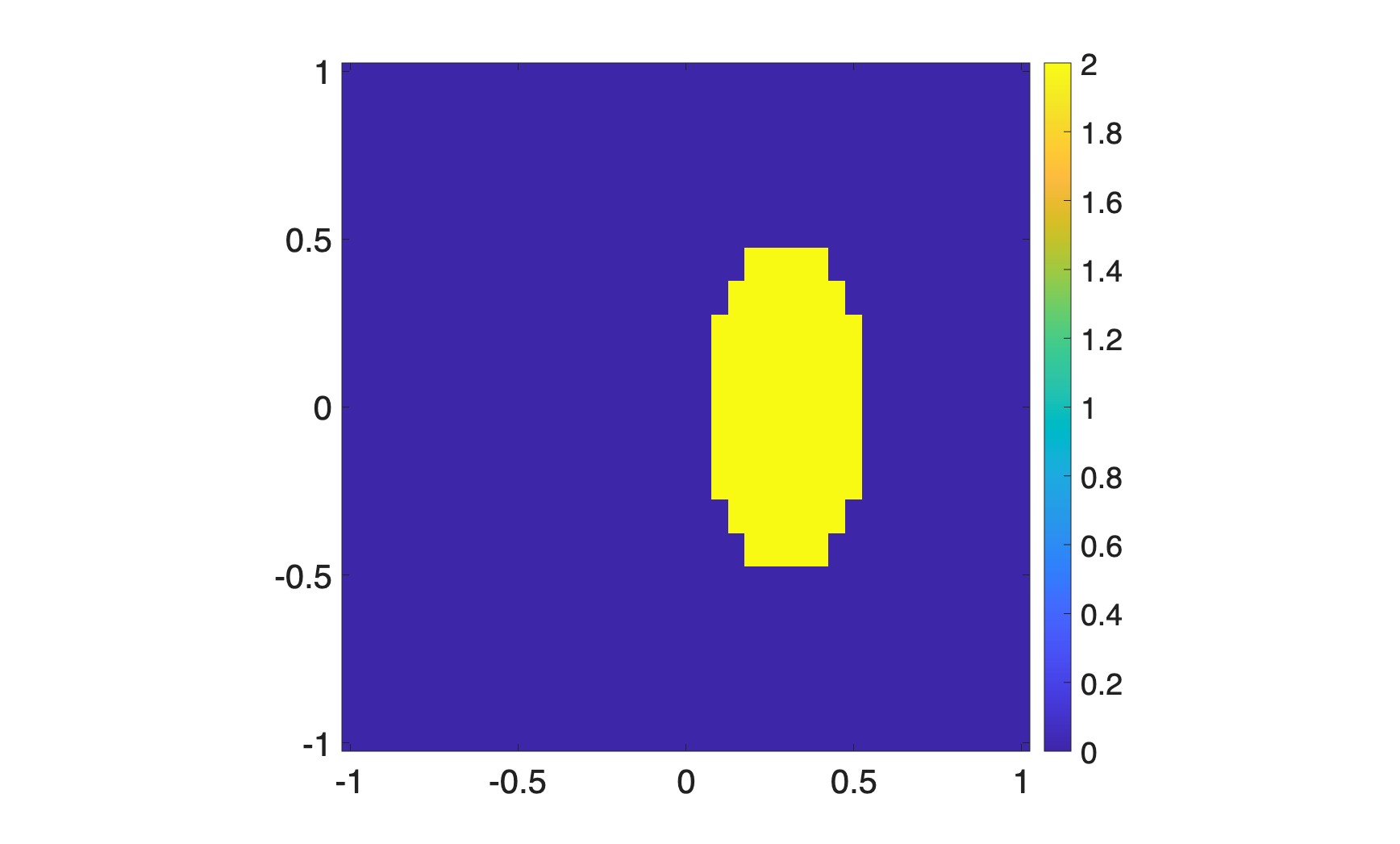}
}
\hfill \subfloat[ $ q_1^{\rm true} $]{
\includegraphics[width=0.23\textwidth]{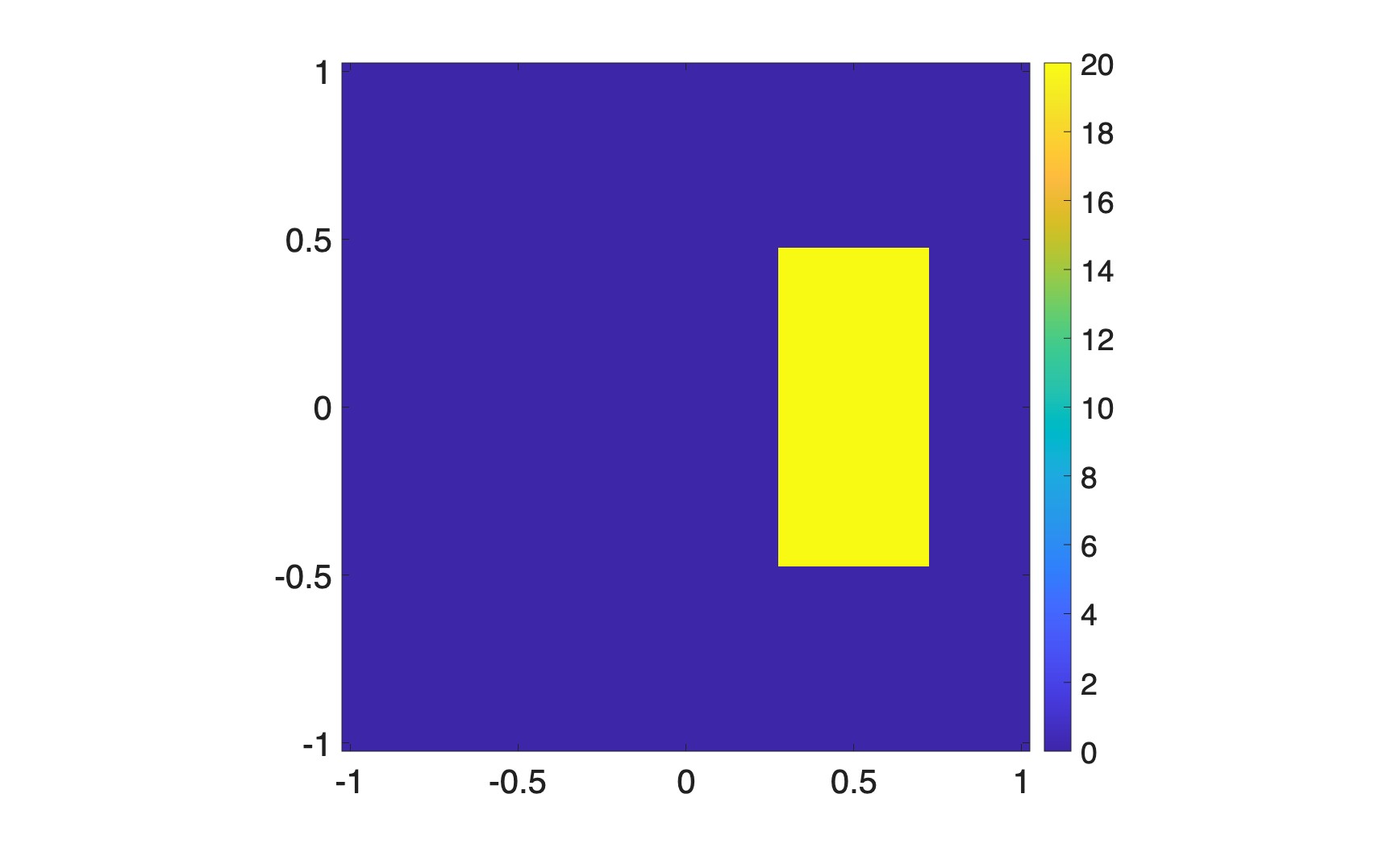}
}
\hfill \subfloat[ $ q_2^{\rm true} $]{
\includegraphics[width=0.23\textwidth]{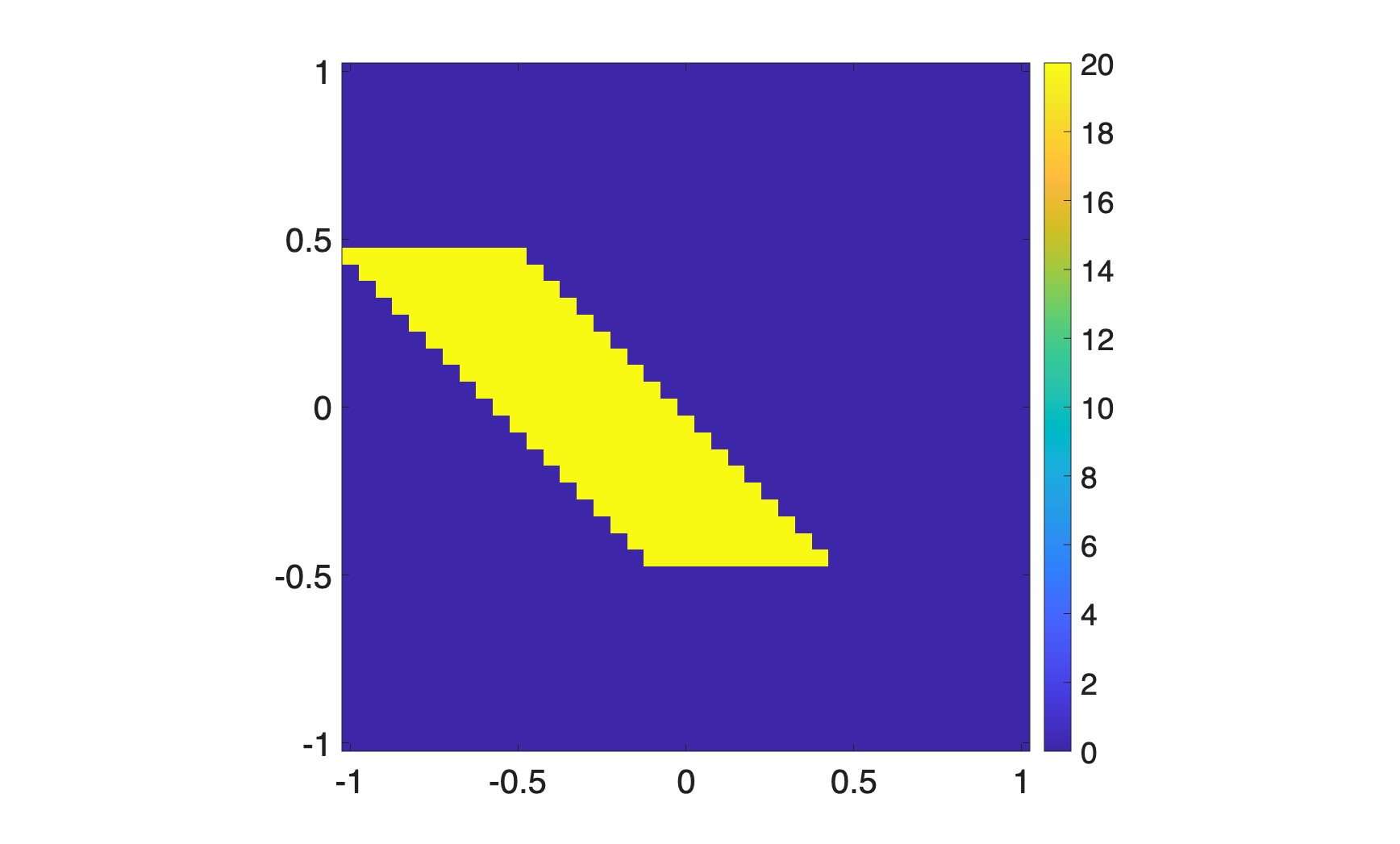}
}

\subfloat[ $ p_1^{\rm comp} $]{
\includegraphics[width=0.23\textwidth]{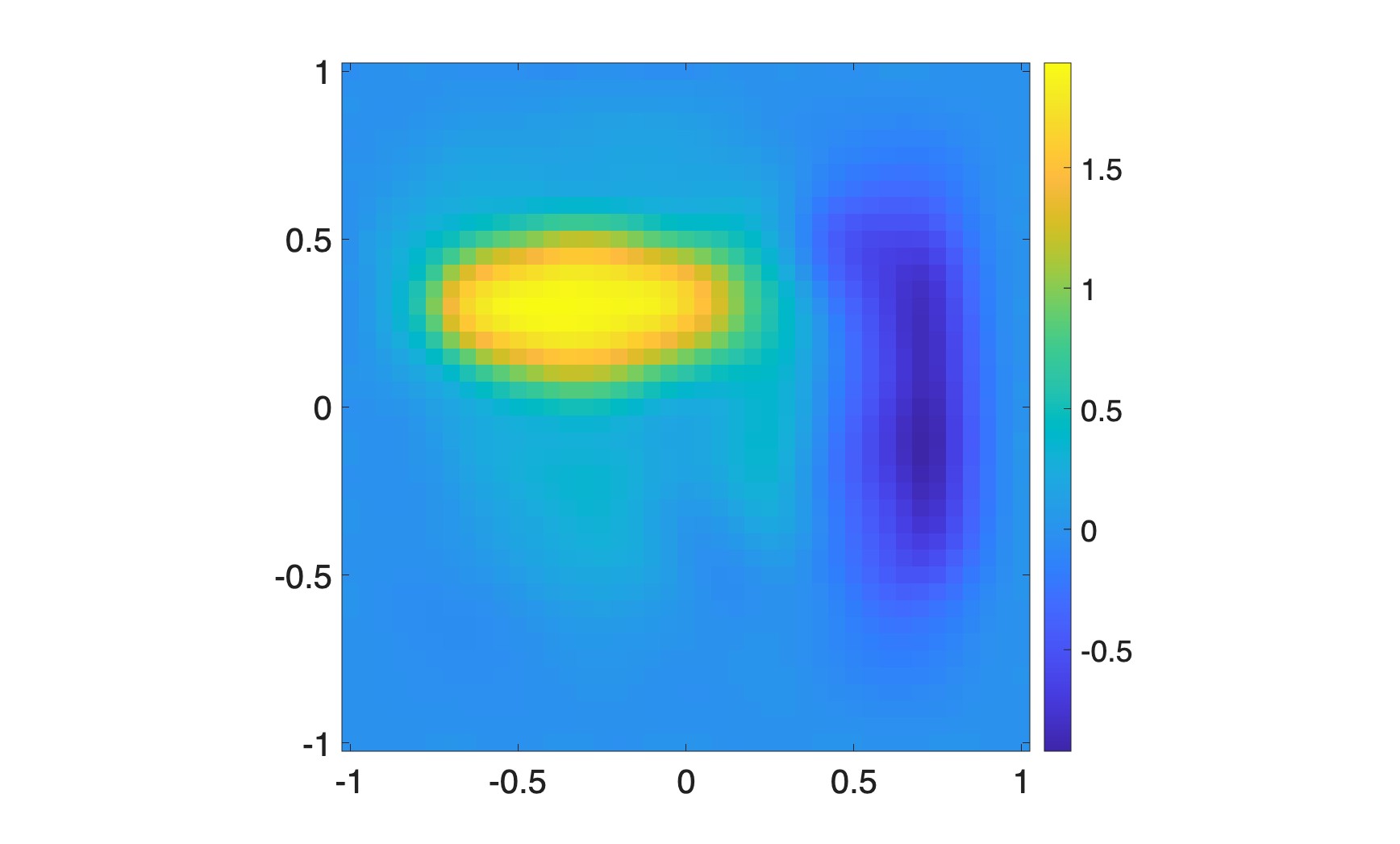}
}
\hfill
\subfloat[ $ p_2^{\rm comp} $]{
\includegraphics[width=0.23\textwidth]{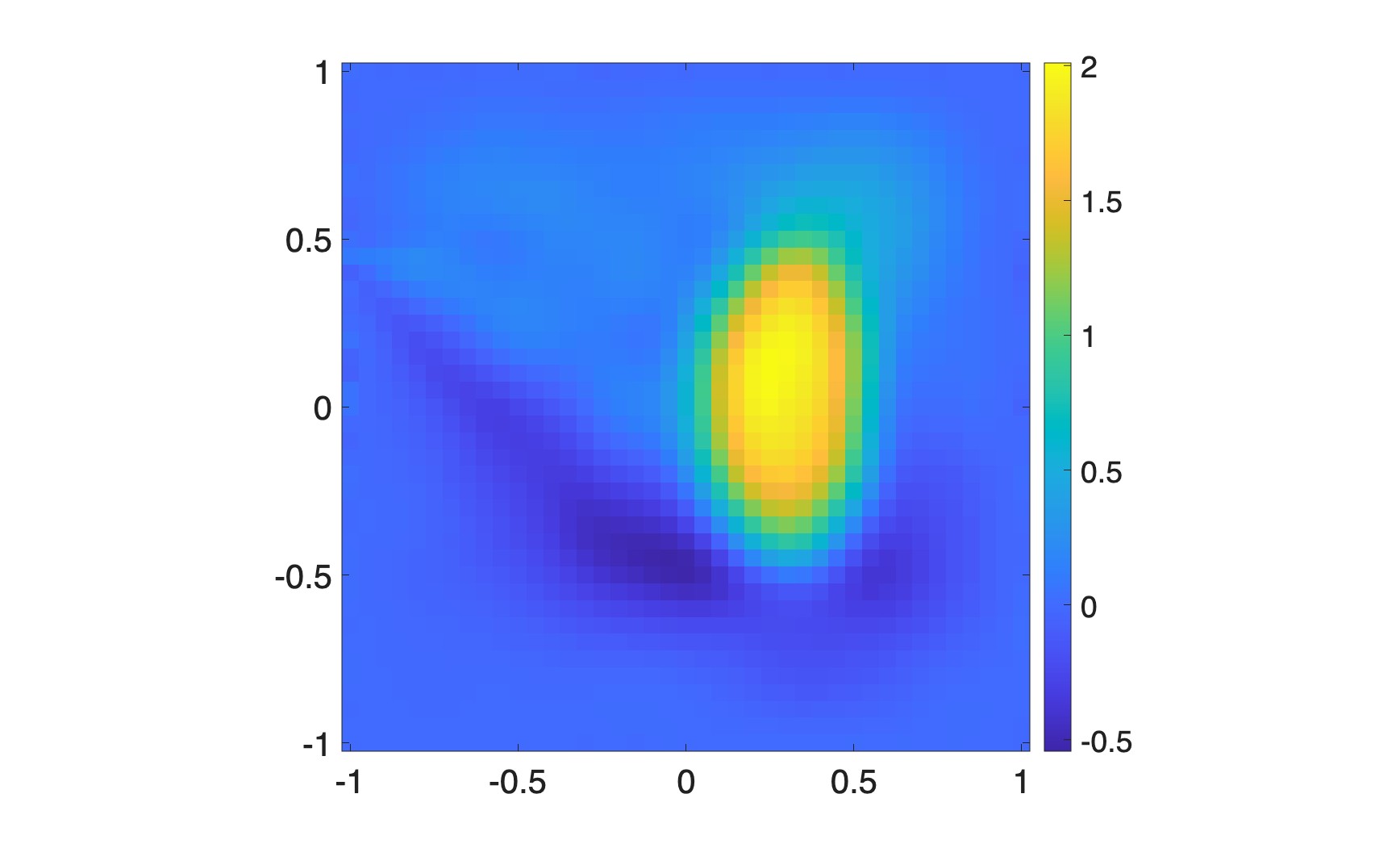}
}
\hfill
\subfloat[ $ p_2^{\rm comp} $]{
\includegraphics[width=0.23\textwidth]{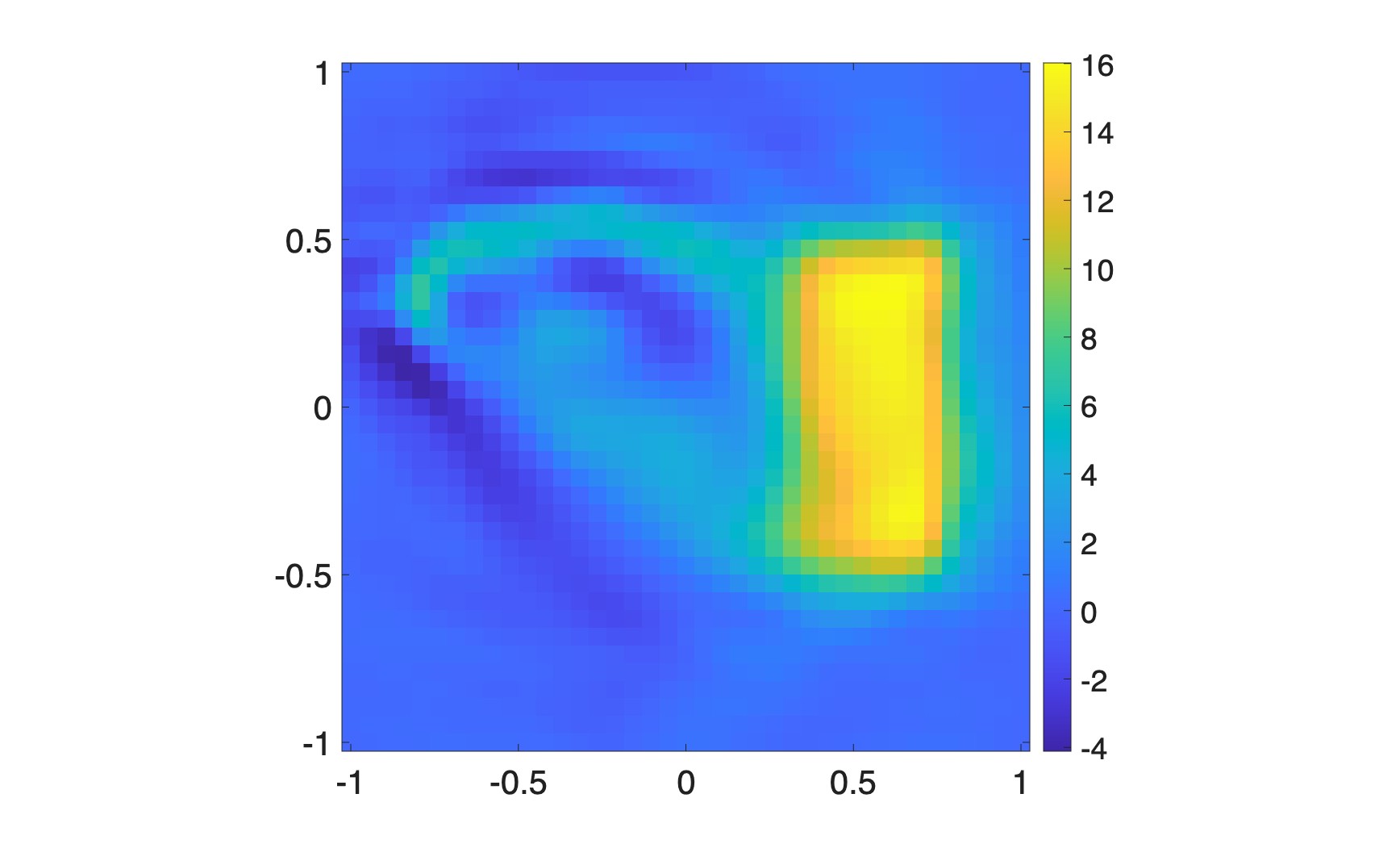}
}
\hfill
\subfloat[ $ p_2^{\rm comp} $]{
\includegraphics[width=0.23\textwidth]{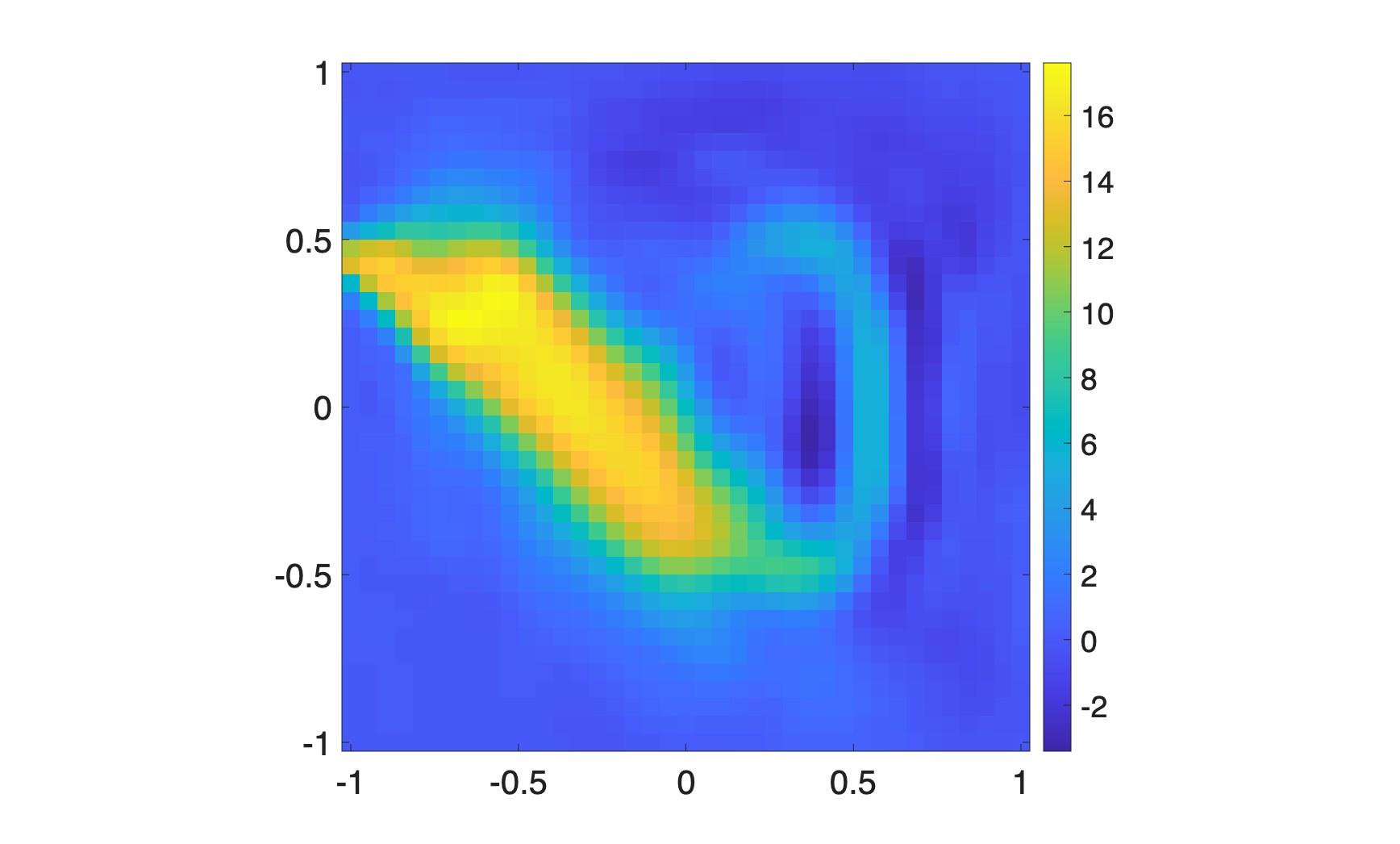}
}

\caption{\label{fig_test2} 
Comparison between the true and reconstructed initial displacement and velocity components for Test  2. The top row (a)--(d) shows the true initial data: displacement components $p_1^{\rm true}$ and $p_2^{\rm true}$, and velocity components $q_1^{\rm true}$ and $q_2^{\rm true}$. The bottom row (e)--(h) displays the corresponding reconstructed components $p_1^{\rm comp}$, $p_2^{\rm comp}$, $q_1^{\rm comp}$, and $q_2^{\rm comp}$ obtained by the proposed time-reduction method.
}
\end{figure}

Figure~\ref{fig_test2} presents the true and reconstructed initial displacement and velocity components for the second test case. The reconstructions successfully capture the general shape, size, and orientation of the inclusions in all four components. In particular, the rectangular features in $q_1^{\rm true}$ and the slanted structure in $q_2^{\rm true}$ are identifiable in their respective reconstructions $q_1^{\rm comp}$ and $q_2^{\rm comp}$. Similarly, the elliptical structures in $p_1^{\rm true}$ and $p_2^{\rm true}$ are accurately localized and well approximated in $p_1^{\rm comp}$ and $p_2^{\rm comp}$.

Despite these strengths, some artifacts and oscillatory patterns are visible in the reconstructions, especially in $q_2^{\rm comp}$ and $p_2^{\rm comp}$, where spurious background fluctuations appear. These artifacts may stem from the coupled nature of the elastic system, which can induce interference between wave modes, and from the use of truncated spectral representations. Additionally, the sharp edges of the true inclusions are somewhat smoothed out in the computed results, suggesting that further regularization tuning or higher-frequency information could improve the spatial resolution. Overall, the reconstructions remain robust and informative, clearly demonstrating the potential of the proposed time-reduction method.

From a quantitative perspective, the numerical reconstruction achieves a reasonable approximation of the amplitude of the displacement and velocity fields. Specifically, the reconstructed maximum of $p_1^{\rm comp}$ is 1.9344, resulting in a relative error of 3.28\%. For $p_2^{\rm comp}$, the maximum is 2.0100 with a relative error of 0.5\%. Regarding the velocity components, the reconstructed maximum of $q_1^{\rm comp}$ is 16.0144, yielding a relative error of 19.93\%, while $q_2^{\rm comp}$ attains 17.607 with a relative error of 11.96\%. Considering that the input data includes $10\%$ additive noise, these quantitative reconstructions, especially for the displacement field, can be regarded as satisfactory. The moderate increase in error for the velocity components is expected due to their higher sensitivity and the inherent challenges posed by the coupled nature of the elastic wave system.

\subsubsection{Test 3}

We now consider the case in which $ \mathbb{C} $ represents an anisotropic elastic medium. To facilitate numerical implementation and matrix-based computations, we flatten the fourth-order elasticity tensor into a $ 4 \times 4 $ matrix by identifying the paired indices as follows: $ 11 \mapsto 1 $, $ 12 \mapsto 2 $, $ 21 \mapsto 3 $, and $ 22 \mapsto 4 $. Under this index mapping, the components of the tensor $ \mathbb{C} $ are organized into the following matrix form:
\[
\mathbb{C}_{\text{flat}} =
\begin{bmatrix}
C_{1111} & C_{1112} & C_{1121} & C_{1122} \\
C_{1211} & C_{1212} & C_{1221} & C_{1222} \\
C_{2111} & C_{2112} & C_{2121} & C_{2122} \\
C_{2211} & C_{2212} & C_{2221} & C_{2222}
\end{bmatrix}
=
\begin{bmatrix}
80 & 5 & 5 & 30 \\
5 & 20 & 20 & 0 \\
5 & 20 & 20 & 0 \\
30 & 0 & 0 & 40
\end{bmatrix}.
\]
This representation captures the anisotropic nature of the medium, where the elastic response depends on the direction of the applied deformation. It is readily verified that there do not exist values of $ \lambda $ and $ \mu $ such that $ \mathbb{C}_{\text{flat}} $ can be written in the isotropic form
\[
\mathbb{C}_{\text{iso}} =
\begin{bmatrix}
\lambda + 2\mu & 0 & 0 & \lambda \\
0 & \mu & 0 & 0 \\
0 & 0 & \mu & 0 \\
\lambda & 0 & 0 & \lambda + 2\mu
\end{bmatrix},
\]
confirming that the medium modeled by $ \mathbb{C} $ is indeed anisotropic.

In Test 3, the true initial displacement and velocity fields $\mathbf{p}^{\rm true} = (p_1^{\rm true}, p_2^{\rm true})$ and $\mathbf{q}^{\rm true} = (q_1^{\rm true}, q_2^{\rm true})$ are constructed using a combination of elliptical and composite-shaped inclusions. Specifically, the first displacement component is defined as
\[
p_1^{\rm true}(x, y) = 
\begin{cases}
1 & \text{if } 8x^2 + 2y^2 < 0.5^2, \\
0 & \text{otherwise},
\end{cases}
\]
representing a horizontally stretched ellipse centered at the origin. The second displacement component is the union of two inclusions:
\[
p_2^{\rm true}(x, y) = 
\begin{cases}
1 & \text{if } 2x^2 + 13(y + 0.6)^2 < 0.7^2 \text{ or } 2(x + 0.5)^2 < 0.3^2 \text{ and } (y - 0.5)^2 < 0.3^2, \\
0 & \text{otherwise}.
\end{cases}
\]

For the velocity field, $q_1^{\rm true}$ is defined as a ring-like inclusion centered at the origin:
\[
q_1^{\rm true}(x, y) = 
\begin{cases}
40 & \text{if } 0.5^2 < x^2 + y^2 < 0.8^2, \\
0 & \text{otherwise},
\end{cases}
\]
and $q_2^{\rm true}$ corresponds to a diagonal-shaped inclusion defined by
\[
q_2^{\rm true}(x, y) = 
\begin{cases}
40 & \text{if } \max((x + y)^2, 20(x - y)^2) < 1, \\
0 & \text{otherwise}.
\end{cases}
\]
These configurations pose significant challenges for the inverse reconstruction due to the inclusion complexity and sharp geometric features.

\begin{figure}[h!]
\centering
\subfloat[ $ p_1^{\rm true} $]{
\includegraphics[width=0.23\textwidth]{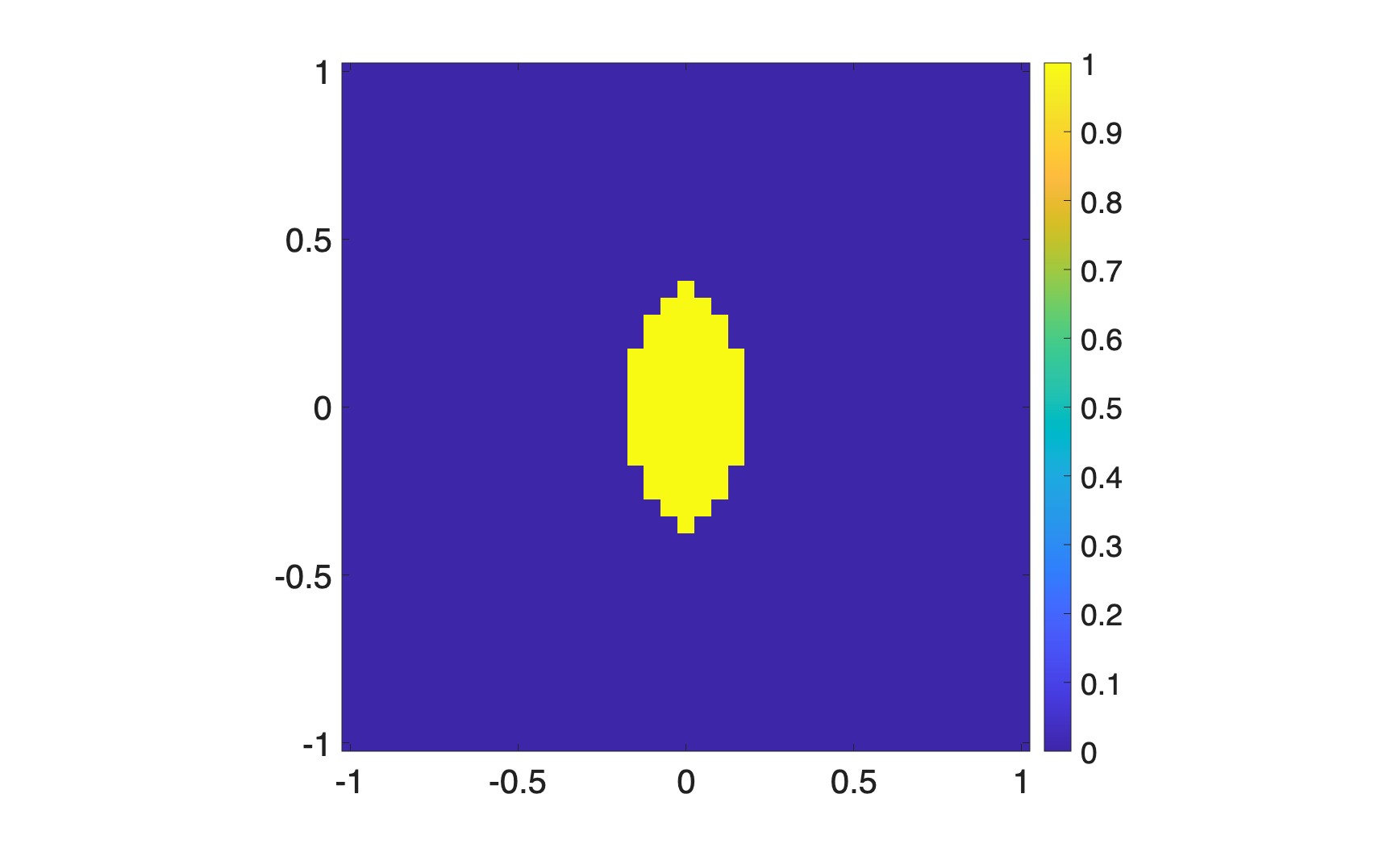}
}
\hfill
\subfloat[ $ p_2^{\rm true} $]{
\includegraphics[width=0.23\textwidth]{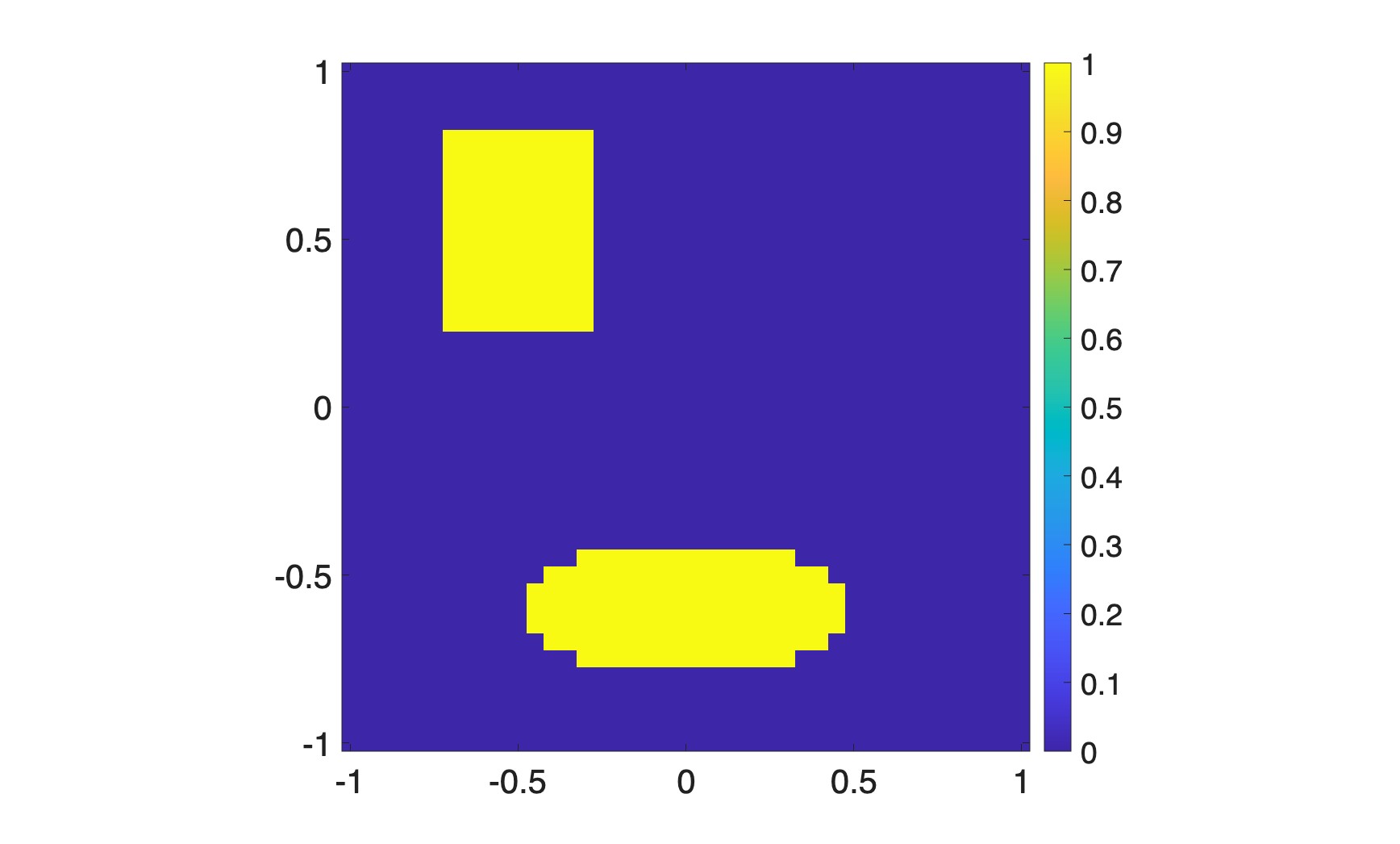}
}
\hfill \subfloat[ $ q_1^{\rm true} $]{
\includegraphics[width=0.23\textwidth]{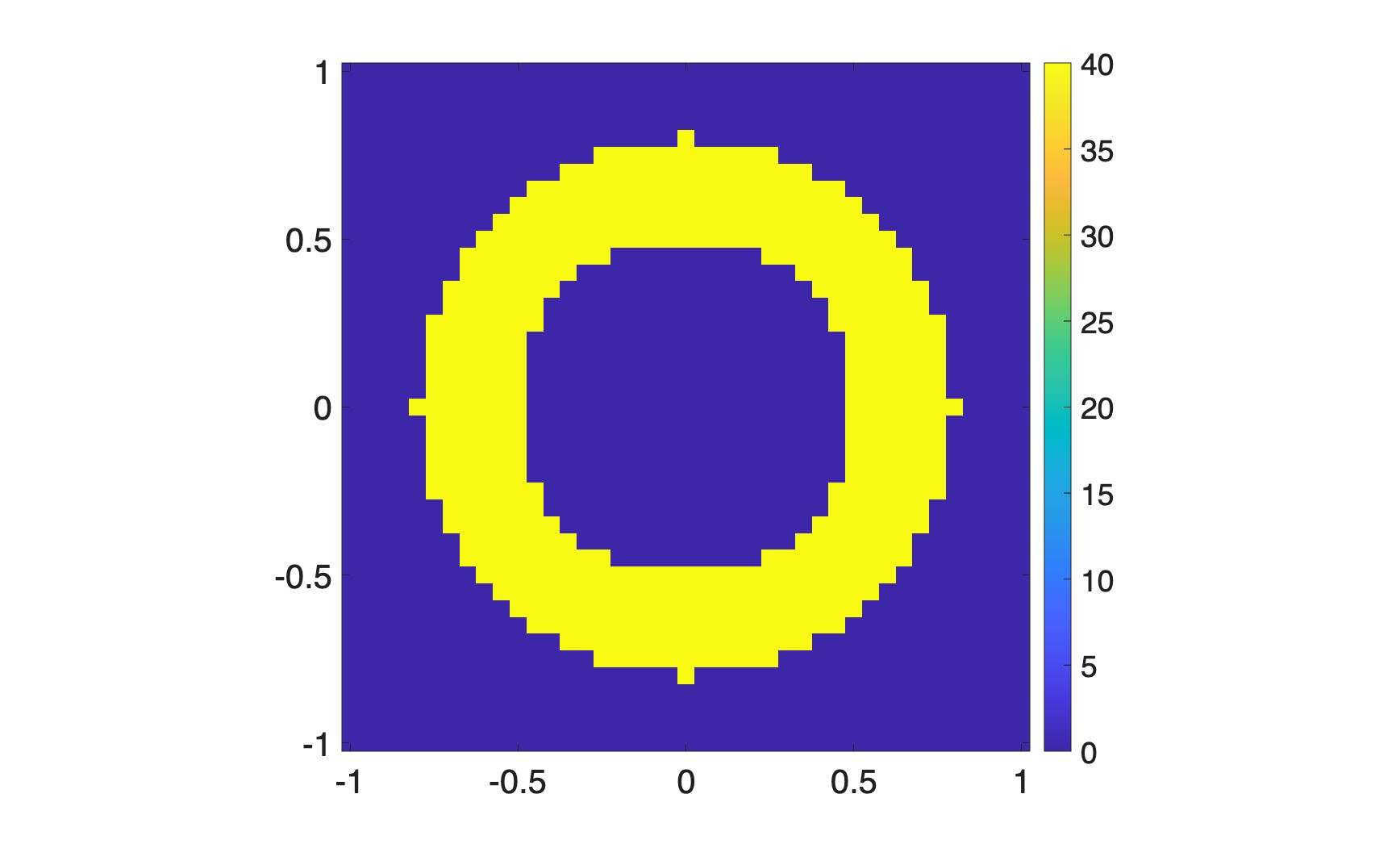}
}
\hfill \subfloat[ $ q_2^{\rm true} $]{
\includegraphics[width=0.23\textwidth]{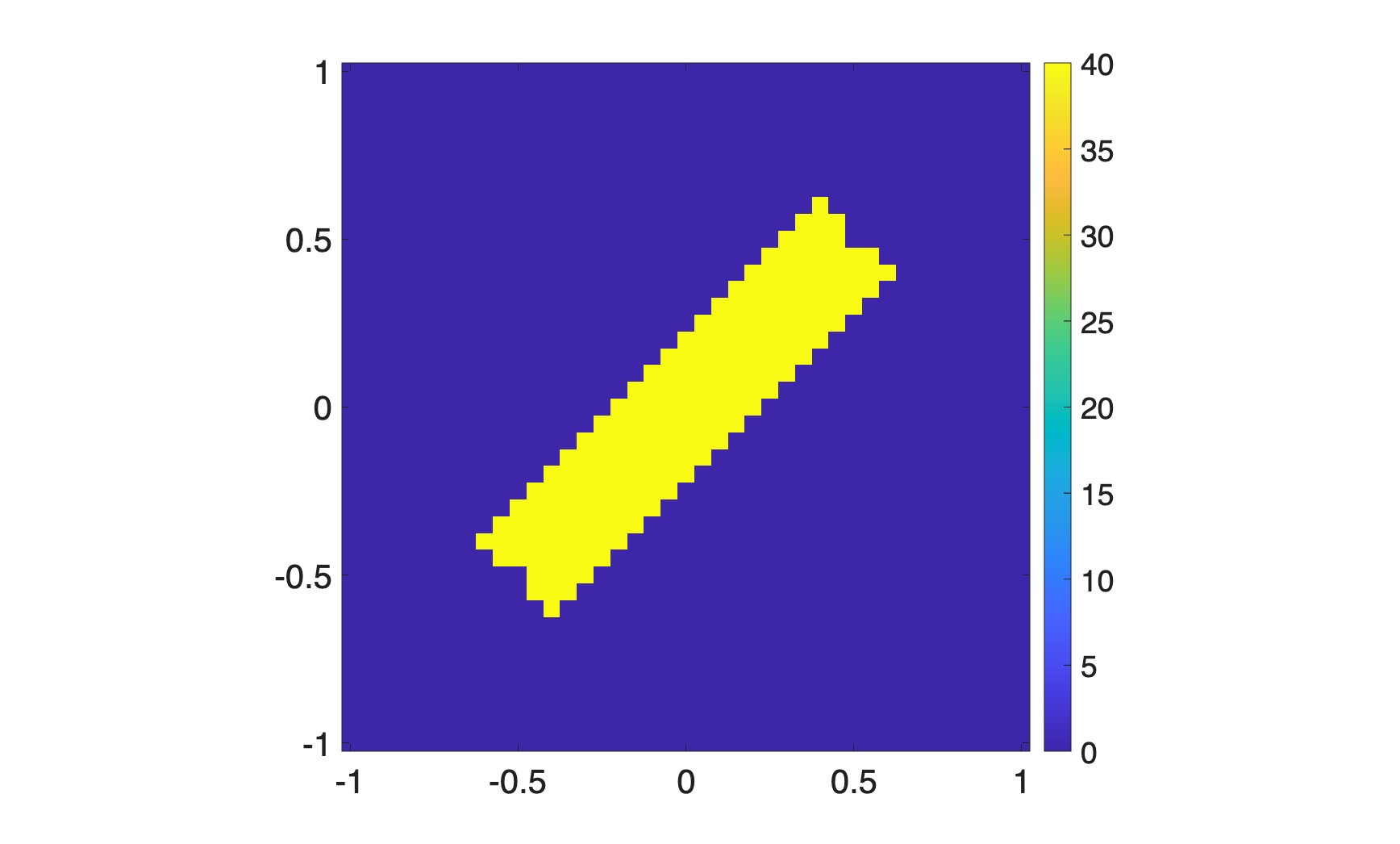}
}

\subfloat[ $ p_1^{\rm comp} $]{
\includegraphics[width=0.23\textwidth]{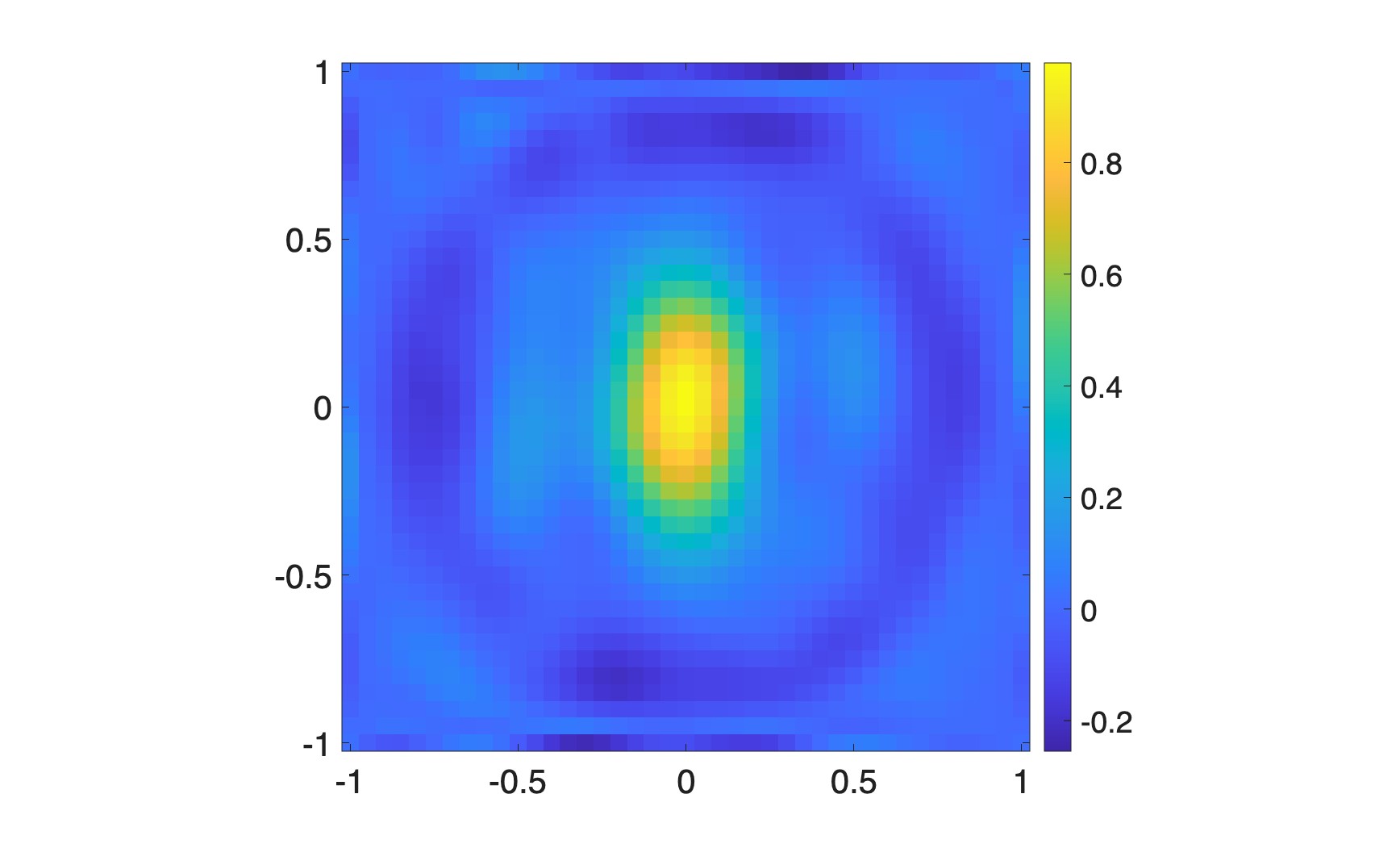}
}
\hfill
\subfloat[ $ p_2^{\rm comp} $]{
\includegraphics[width=0.23\textwidth]{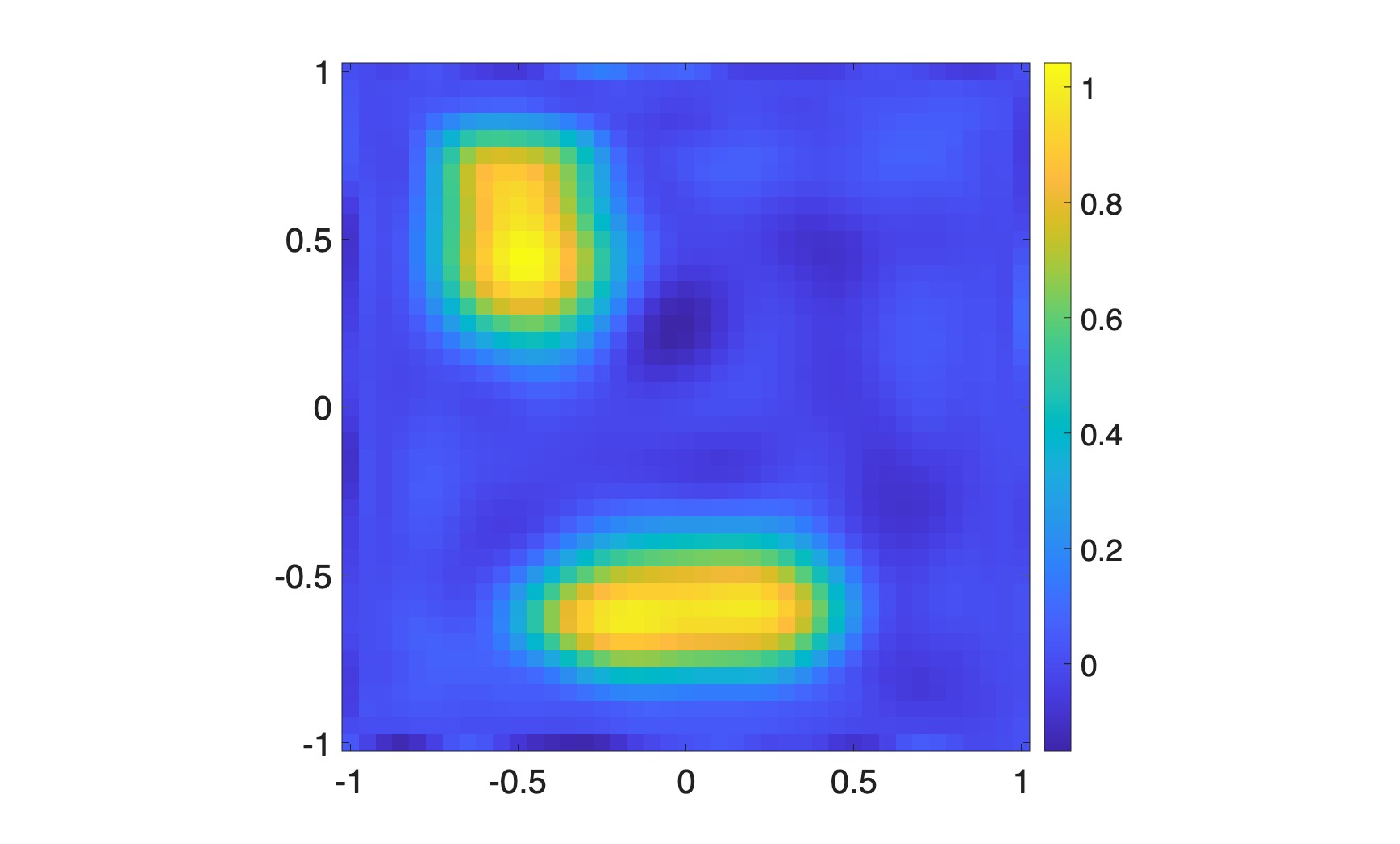}
}
\hfill
\subfloat[ $ p_2^{\rm comp} $]{
\includegraphics[width=0.23\textwidth]{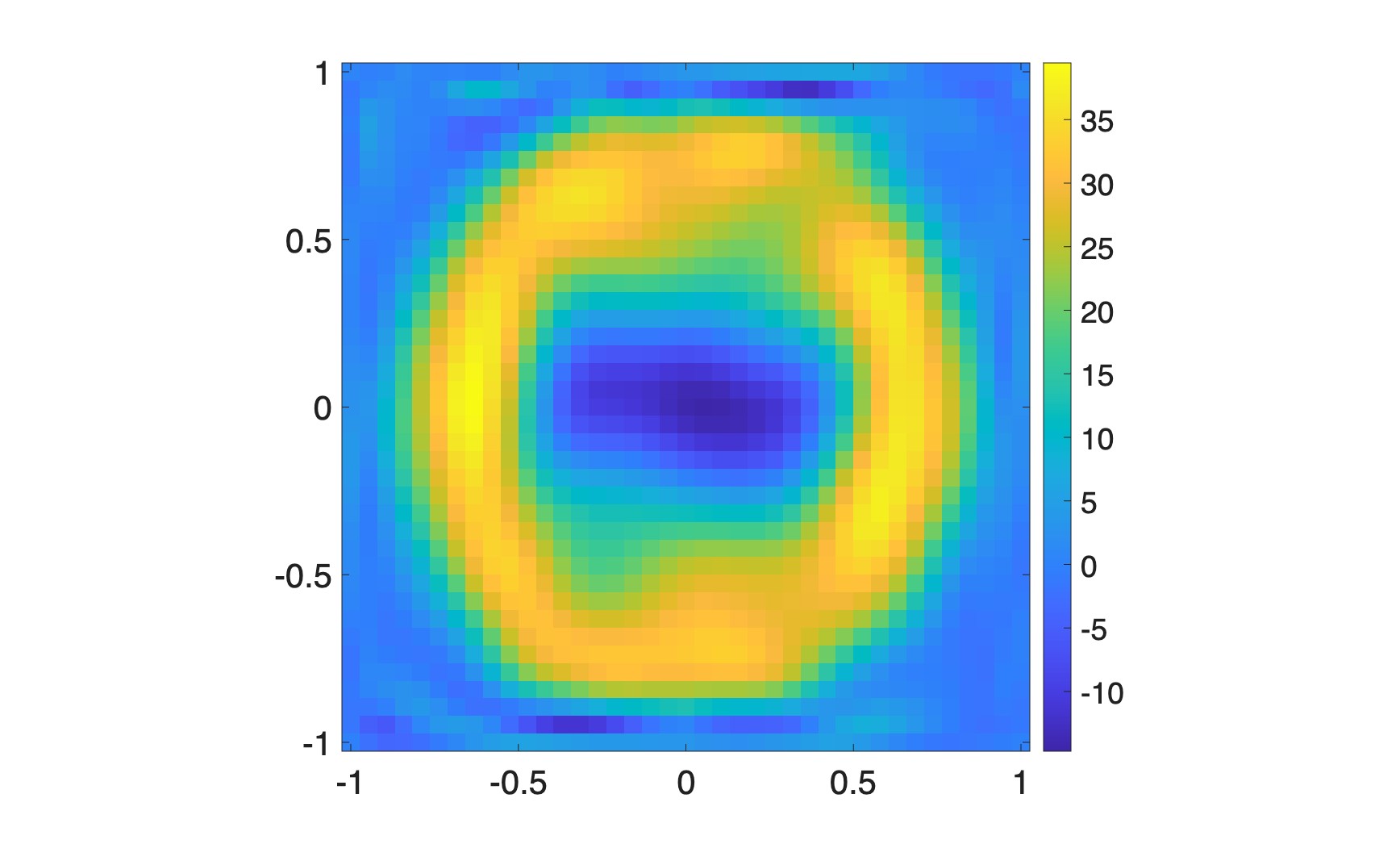}
}
\hfill
\subfloat[ $ p_2^{\rm comp} $]{
\includegraphics[width=0.23\textwidth]{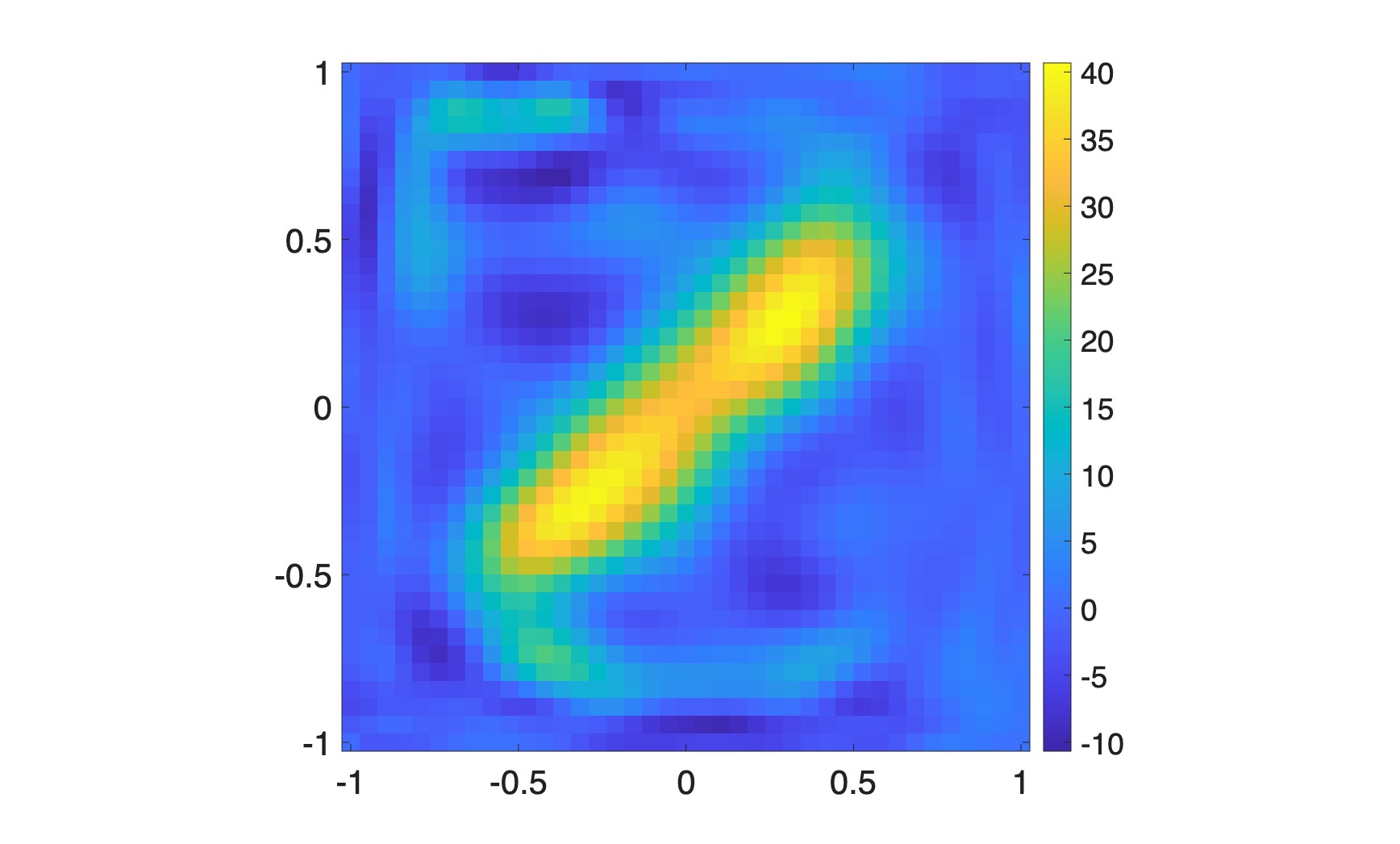}
}

\caption{\label{fig_test3} 
Comparison between the true and reconstructed initial displacement and velocity components for Test  3. The top row (a)--(d) shows the true initial data: displacement components $p_1^{\rm true}$ and $p_2^{\rm true}$, and velocity components $q_1^{\rm true}$ and $q_2^{\rm true}$. The bottom row (e)--(h) displays the corresponding reconstructed components $p_1^{\rm comp}$, $p_2^{\rm comp}$, $q_1^{\rm comp}$, and $q_2^{\rm comp}$ obtained by the proposed time-reduction method.
}
\end{figure}

The numerical results in Figure~\ref{fig_test3} illustrate the performance of the proposed time-dimensional reduction method in reconstructing the initial displacement and velocity fields. Overall, the reconstructed components capture the principal geometric features of the true data. The elliptical inclusion in \( p_1^{\text{true}} \) and the ring-like structure in \( q_1^{\text{true}} \) are identifiable in the computed solutions, showing that the method is capable of resolving curved and annular shapes effectively. The rectangular inclusion in \( p_2^{\text{true}} \), located in the upper-right part of the domain, is also reconstructed with correct orientation and position, though the edges are moderately blurred. Similarly, the slanted inclusion in \( q_2^{\text{true}} \) is approximately recovered, albeit with some smoothing and artifacts surrounding the interface.

Some challenges remain, particularly in the form of mild distortions near the inclusion boundaries and loss of sharpness in fine-scale features. These limitations may stem from the regularization process, the coupled structure of the elastic system, and the moderate noise level in the boundary data. Despite these issues, the method demonstrates strong robustness and fidelity, yielding quantitatively and qualitatively reliable approximations in complex multi-inclusion settings.

From a quantitative standpoint, the reconstructed displacement and velocity fields exhibit a satisfactory match with the true amplitudes. The computed maximum of $p_1^{\rm comp}$ is 0.9786, corresponding to a relative error of 2.14\%, while $p_2^{\rm comp}$ reaches 1.0418 with an error of 4.18\%. For the velocity components, the reconstructed maximum of $q_1^{\rm comp}$ is 39.4602 (1.35\% error), and that of $q_2^{\rm comp}$ is 40.6832 (1.71\% error). Given the presence of 10\% additive noise in the input data, these results, particularly for the displacement fields, demonstrate the robustness of the method. The slightly larger errors in the velocity components are consistent with their increased sensitivity and the coupling effects inherent in the elastic wave system.

\section{Concluding Remarks}
\label{sec_concluding}
In this paper, we have proposed a novel method for solving the inverse source problem for the elastic wave equation, focusing on the recovery of initial displacement and velocity fields from boundary measurements. A key contribution is the introduction of a new orthonormal basis in time, constructed from Legendre polynomials and exponential weights, which enables stable and accurate spectral representation of time-dependent functions. This basis facilitates a time-dimensional reduction strategy that transforms the original inverse problem into a sequence of spatial elliptic problems for the Fourier coefficients.

We have established a rigorous convergence theorem that ensures the reliability of the method under appropriate regularization and truncation conditions. The theoretical results confirm that the approximated space-time solution converges to the true minimal-norm solution as the noise level tends to zero. The convergence analysis is supported by a variational framework and compactness arguments that accommodate the ill-posedness of the inverse problem.

Numerical experiments in two spatial dimensions demonstrate the method's effectiveness in recovering both the qualitative structure and quantitative amplitude of the initial data. The reconstructions remain stable under noisy measurements, and the results show good agreement with the true profiles, even in challenging scenarios involving multiple inclusions or complex geometries.

 \section*{Acknowledgement}
 The work of Dang Duc Trong was supported by Vietnam National University (VNU-HCM) under grant number
 T2024-18-01.
The work of Loc Nguyen was partially supported by the National Science Foundation grant DMS-2208159.

\end{document}